\title{Stable sets in \{ISK4,wheel\}-free graphs}
\author{Martin Milani\v c\thanks{University of Primorska, UP IAM, UP FAMNIT, Koper, Slovenia.
Partially supported by the Slovenian Research Agency (I$0$-$0035$, research program P$1$-$0285$ and research projects N$1$-$0032$, J$1$-$5433$, J$1$-$6720$, J$1$-$6743$, and J$1$-$7051$). E-mail: \texttt{martin.milanic@upr.si}.}
\and Irena Penev \thanks{Department of Applied Mathematics and Computer Science, Technical University of Denmark, Lyngby, Denmark. Most of this work was conducted while the author was at Universit\'e de Lyon, LIP, ENS de Lyon, Lyon, France. Partially
supported by the ANR project \textsc{Stint} under \textsc{Contract ANR-13-BS02-0007}, by the LABEX MILYON (ANR-10-LABX-0070) of Universit\'e de
    Lyon, within the program ‘‘Investissements d'Avenir’’
    (ANR-11-IDEX-0007) operated by the French National Research Agency
    (ANR), and by the ERC Advanced Grant GRACOL, project number 320812. Email: \texttt{ipen@dtu.dk}.}
  \and Nicolas Trotignon\thanks{Unviersit\'e de Lyon, CNRS, LIP, ENS de
  Lyon. Partially supported by ANR project Stint under reference
  ANR-13-BS02-0007 and by the LABEX MILYON (ANR-10-LABX-0070) of
  Universit\'e de Lyon, within the program ‘‘Investissements
  d'Avenir’’ (ANR-11-IDEX-0007) operated by the French National
  Research Agency (ANR). E-mail: \texttt{nicolas.trotignon@ens-lyon.fr}.}
}
\newtheorem{lemma}{Lemma}[section]
\newtheorem{proposition}[lemma]{Proposition}
\newtheorem{theorem}[lemma]{Theorem}
\newtheorem{corollary}[lemma]{Corollary}
\begin{document}
\maketitle

\begin{abstract}
 An \emph{ISK4} in a graph $G$ is an induced subgraph of $G$ that is
 isomorphic to a subdivision of $K_4$ (the complete graph on four
 vertices). A \emph{wheel} is a graph that consists of a chordless cycle,
 together with a vertex that has at least three
 neighbors in the cycle. A graph is \{ISK4,wheel\}-free if it has
 no ISK4 and does not contain a wheel as an induced subgraph. We
 give an $O(|V(G)|^7)$-time algorithm to compute the maximum weight of a
 stable set in an input weighted \hbox{\{ISK4,wheel\}-free} graph $G$ with
 non-negative integer weights.
\end{abstract}

\section{Introduction}
\label{s:intro}

All graphs in this paper are finite and simple.
An \emph{ISK4} in a graph $G$ is an induced subgraph of $G$ that is
isomorphic to a subdivision of $K_4$ (the complete graph on four
vertices). An ISK4-free graph is a graph $G$ that contains no ISK4 
(that is, no induced subgraph of $G$ is isomorphic to a subdivision of $K_4$).
The class of ISK4-free graphs contains all series-parallel graphs, and also all
line graphs of graphs of maximum degree at most three.

L{\'e}v{\^e}que, Maffray, and Trotignon~\cite{MR2927414} proved a
decomposition theorem for ISK4-free graphs, but gave no
algorithmic applications. In particular, no polynomial-time
algorithms and no hardness proofs are known for the following
problems in the class of ISK4-free graphs: recognition, maximum
stable set, and coloring. (Finding a maximum clique in an
ISK4-free graph is of course trivial because every clique in such
a graph is of size at most three.)

A \emph{wheel} is a graph that consists of a chordless cycle,
together with a vertex (called the {\em center} of the wheel) that has at least three neighbors in
the cycle. A graph is \emph{wheel-free} if none of its induced subgraphs
is a wheel. Wheel-free graphs have a number of structural properties (see for
instance~\cite{abChTrVu:moplex,aboulker.c.s.T:wfpg,diotRaTrVu:15}).
However, the maximum stable set problem is easily seen to remain 
NP-hard even when restricted to the class of wheel-free graphs. To see this, denote by
$\alpha(G)$ the {\em stability number} (i.e., the maximum size of a stable set)
of a graph $G$, and consider the operation of subdividing every edge of
$G$ twice. This yields a graph $G'$ that is wheel-free (because every vertex
of degree at least three in $G'$ has only neighbors of degree two, and so it
cannot be the center of a wheel). As observed by
Poljak~\cite{poljak:74}, $\alpha(G') = \alpha(G) + |E(G)|$, and so
computing the stability number of a wheel-free graph is as hard as computing it
in a general graph.

A graph is \emph{\{ISK4,wheel\}-free} if it is ISK4-free and
wheel-free. In~\cite{MR2927414}, a decomposition theorem is given for
\{ISK4,wheel\}-free graphs. (This theorem was obtained as a corollary of the
decomposition theorem for ISK4-free graphs from~\cite{MR2927414}.) The theorem
for \{ISK4,wheel\}-free graphs is stronger than the one for
ISK4-free graphs in the sense that the former theorem can be used to solve the recognition
and the coloring problems for \{ISK4,wheel\}-free graphs in
polynomial time. However, no other algorithmic application has
previously been reported.

In this paper, we investigate the {\em maximum weight stable set problem}
restricted to \{ISK4,wheel\}-free graphs. Let us be precise. First, by
a {\em weighted graph}, we mean an ordered pair $(G,w)$, where $G$ is a
graph and $w$ is a function (called a {\em weight function} for $G$) that
assigns to each vertex $v$ of $G$ a non-negative integer weight $w(v)$. The
{\em weight} of a set of vertices is the sum of the weights of its elements.
The {\em stability number} of a weighted graph $(G,w)$, denoted by
$\alpha(G,w)$, is the maximum weight of a stable set of $G$, and a maximum
weighted stable set of $(G,w)$ is a stable set whose weight is precisely
$\alpha(G,w)$. (If $(G,w)$ is a weighted graph and $H$ is an induced subgraph of $G$, then we will also write
$\alpha(H,w)$ for the stability number of the weighted graph $(H,w')$ where $w'$ is the restriction of $w$ to $V(H)$.)
The {\em maximum weight stable set problem} for a given class $\mathcal{G}$ of graphs is the problem of finding a maximum weight stable set
in a given weighted graph $(G,w)$ such that $G \in \mathcal{G}$. 
A {\em hereditary class} is a class of graphs that is closed under isomorphism and induced subgraphs (clearly,
the class of \{ISK4,wheel\}-free graphs is a hereditary class). The following is
well-known.

\begin{proposition}[folklore]\label{prop:number-to-set} Let $\mathcal{G}$ be a hereditary class. Suppose that $\mathcal{A}$ is an algorithm that computes the stability number of any weighted graph $(G,w)$ such that $G \in \mathcal{G}$ in $O(|V(G)|^k)$ time. Then there is an algorithm $\mathcal{B}$ that computes a maximum weight stable set of any graph $(G,w)$ such that $G \in \mathcal{G}$ in $O(|V(G)|^{\max\{k+1,3\}})$ time.
\end{proposition}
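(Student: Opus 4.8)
The plan is to use a standard self-reduction argument that converts an algorithm computing the stability number (a single number) into one that actually produces a maximum weight stable set, at the cost of one extra factor of $|V(G)|$ in the running time. The key observation is that if we can compute $\alpha(G,w)$, then we can determine, vertex by vertex, whether a given vertex belongs to some maximum weight stable set, by comparing stability numbers of suitably modified induced subgraphs.

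More precisely, I would proceed as follows. Let $(G,w)$ be the input weighted graph with $G \in \mathcal{G}$, and let $n = |V(G)|$. First I would call $\mathcal{A}$ on $(G,w)$ to compute $\alpha = \alpha(G,w)$. Then I would build up a stable set greedily. Fix an arbitrary ordering $v_1, \dots, v_n$ of the vertices. I maintain a current induced subgraph $H$ (initially $H = G$) and decide, for each vertex in turn, whether to commit it to the solution. To test whether $v_i$ can be added, I consider the graph $H' = H \setminus N[v_i]$ obtained by deleting $v_i$ together with all of its neighbors (within the current $H$); since $\mathcal{G}$ is hereditary, $H' \in \mathcal{G}$, so I may run $\mathcal{A}$ on $(H',w)$. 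If $w(v_i) + \alpha(H',w)$ equals the target value (the residual stability number that still needs to be achieved within $H$), then $v_i$ lies in a maximum weight stable set of $H$; I add $v_i$ to my output set $S$, set $H := H'$, and decrease the target by $w(v_i)$. Otherwise $v_i$ is excluded, and I set $H := H \setminus \{v_i\}$. Iterating over all $n$ vertices produces a stable set $S$ whose weight equals $\alpha(G,w)$, hence a maximum weight stable set.

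The correctness follows from an invariant maintained throughout the loop: after processing $v_1,\dots,v_{i}$, the committed set $S$ is stable, $S$ is disjoint from and has no neighbors in the current graph $H$, and $w(S) + \alpha(H,w)$ equals $\alpha(G,w)$. The greedy test precisely preserves this invariant, because whenever we keep $v_i$ we have verified that there is a stable set of $H$ of the required residual weight containing $v_i$, and whenever we discard $v_i$ the invariant continues to hold with $H\setminus\{v_i\}$ in place of $H$ (this is where we use that if no optimal solution of $H$ uses $v_i$ then the optimum is unchanged by deleting $v_i$). At the end $H$ is edgeless on the vertices that survived, and the residual target is achieved exactly.

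As for the running time, the loop makes $n$ iterations, each invoking $\mathcal{A}$ once on an induced subgraph of $G$, at cost $O(n^k)$ per call, giving $O(n^{k+1})$ in total; constructing each $H'$ and updating $S$ takes $O(n^2)$ per iteration, contributing $O(n^3)$ overall, which is why the exponent is $\max\{k+1,3\}$. There is no real obstacle here: the only subtlety worth flagging is the requirement that $\mathcal{G}$ be hereditary, which is exactly what guarantees that every subgraph $H'$ passed to $\mathcal{A}$ remains in the class so that $\mathcal{A}$ applies. Everything else is routine bookkeeping of the invariant.
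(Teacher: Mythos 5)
Your proposal is correct and is essentially the same greedy self-reduction the paper uses: test each vertex $v$ by comparing $w(v)+\alpha(H\smallsetminus N[v],w)$ against the current optimum, commit or delete accordingly, and charge $O(n^k)$ per vertex plus $O(n^2)$ for graph updates. The only cosmetic difference is that you phrase it iteratively with a maintained residual target while the paper phrases it recursively, recomputing $\alpha$ of the current graph at each call; both yield $O(n)$ calls to $\mathcal{A}$ and the stated $O(n^{\max\{k+1,3\}})$ bound.
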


\begin{proof}
Let $(G,w)$ be an input graph with $G \in \mathcal{G}$, and set $n = |V(G)|$. If $G$ is the null graph, then the algorithm returns $\emptyset$ and stops. Otherwise, we choose a vertex $v \in V(G)$, we compute the graph $G \smallsetminus N[v]$ in $O(n^2)$ time (where $N[v]$ is the set consisting of $v$ and all its neighbors in $G$), and using the algorithm $\mathcal{A}$, we compute $\alpha(G,w)$ and $\alpha(G \smallsetminus N[v],w)$ in $O(n^k)$ time. Clearly, $w(v)+\alpha(G \smallsetminus N[v],w) \leq \alpha(G,w)$. If $w(v)+\alpha(G \smallsetminus N[v],w) = \alpha(G,w)$, then we recursively compute a maximum weight stable set $S$ of $(G \smallsetminus N[v],w)$, and the algorithm returns $\{v\} \cup S$ and stops. On the other hand, if $w(v)+\alpha(G \smallsetminus N[v],w) < \alpha(G,w)$, then we see that no maximum weight stable set of $(G,w)$ contains $v$. In this case, we compute $G \smallsetminus v$ in $O(n^2)$ time, we recursively compute a maximum weight stable set $S$ of $(G \smallsetminus v,w)$, and the algorithm returns $S$ and stops.

It is clear that the algorithm is correct. We make $O(n)$ recursive calls to the algorithm, and it follows that the total running time of the algorithm is $O(n^{\max\{k+1,3\}})$.
\end{proof}

In view of Proposition~\ref{prop:number-to-set}, from now on, we focus on constructing a polynomial-time
algorithm that computes the stability number of weighted \{ISK4,wheel\}-free graphs.

The decomposition theorem for \{ISK4,wheel\}-free graphs from~\cite{MR2927414} states
(roughly) that every such graph is either ``basic'' or admits a ``decomposition''
(that is, a way to break it up into smaller pieces). The basic classes are all fairly
easy to handle and the main difficulty is posed by the decompositions. One of the
decompositions, namely the clique-cutset (that is, a clique whose deletion yields
a disconnected graph), is easy to handle, but the other one is not: the ``proper
2-cutset.'' A {\em proper 2-cutset} of a graph $G$ is a pair of non-adjacent
vertices, say $a$ and $b$, such that $V(G) \smallsetminus \{a,b\}$ can be
partitioned into two non-empty sets $X$ and $Y$ so that there is no edge between
$X$ and $Y$, and neither $G[X \cup \{a,b\}]$ nor $G[Y \cup \{a,b\}]$ is a path
between $a$ and $b$.

 The problem with a proper 2-cutset $\{a, b\}$ of a graph $G$ is that
 a maximum stable set of $G$ may contain $a$ (but not $b$), or $b$ (but not $a$), or neither $a$ nor $b$,
 or both $a$ and $b$. This phenomenon also occurs in any induced
 subgraph of $G$ that contains $a$ and $b$, and in particular in any
 reasonable subgraph built for the purposes of a recursive algorithm.
 So, any naive attempt to build an algorithm for the maximum stable
 set problem relying on proper 2-cutsets should lead one to consider an
 exponential number of cases. In fact, the situation is even worse
 for proper 2-cutsets as shown by a hardness result that we explain
 now. Let $G$ be a graph. A \emph{2-extension} of $G$ is any graph
 obtained from $G$ by first deleting a vertex $v$ of degree two, with non-adjacent neighbors $a$ and $b$, 
 then adding four vertices of degree two forming a path $a-x_1-x_2-x_3-x_4-b$, and finally adding a vertex
 $x$ adjacent to $x_1$, $x_2$, $x_3$ and $x_4$. An \emph{extended
  bipartite} graph is any graph obtained from a bipartite graph by
 repeatedly applying 2-extensions. Trivially, extended bipartite
 graphs have a decomposition theorem: if $G$ is an extended bipartite graph, then either
 $G$ is bipartite, or $G$ was obtained from an even cycle by performing exactly one 2-extension, or $G$ has a proper 2-cutset.
 It is well-known that one can find the stability number of a (weighted) bipartite graph in polynomial time (see for instance~\cite{FaigleFrahling}). It is also clear that the stability number of a (weighted) graph obtained from an even cycle by performing exactly one 2-extension can be found in polynomial time (indeed, if $G$ is obtained from an even cycle by performing exactly one 2-extension, and $x,x_1,x_2,x_3,x_4$ are as in the definition of a 2-extension, then every stable set of $G$ is also a stable set of at least one of $G \smallsetminus \{x_2,x_3\}$, $G \smallsetminus \{x,x_1\}$, and $G \smallsetminus \{x,x_2\}$, and each of these three induced subgraphs of $G$ is either a path or an even cycle, and is therefore bipartite). 
 So, if proper 2-cutsets were a good tool for solving the maximum stable set problem,
 there should be a polynomial-time algorithm for solving this problem in
 extended bipartite graphs. However, it was shown in~\cite{nicolas.kristina:2-join} that the maximum stable
 set problem is NP-hard when restricted to extended bipartite graphs.
 The result is stated differently in~\cite{nicolas.kristina:2-join}, and so we reproduce it here
 for the sake of completeness.

\begin{proposition}
 The problem of computing the stability number of an input extended bipartite graph is NP-hard.
\end{proposition}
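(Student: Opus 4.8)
The plan is to reduce from the maximum stable set problem on general graphs, which is well known to be NP-hard. Given an arbitrary graph $H$, I would first subdivide every edge exactly once: replace each edge $e=ab$ of $H$ by a new vertex $v_e$ together with the two edges $av_e$ and $v_eb$; call the resulting graph $B$. Since the original vertices are now pairwise non-adjacent and each subdivision vertex has exactly the two neighbours $a,b$, the graph $B$ is bipartite, with sides $V(H)$ and $\{v_e : e\in E(H)\}$. Each $v_e$ has degree two and its two neighbours are non-adjacent, so I may apply a 2-extension at $v_e$; since such an operation affects only $v_e$, its two neighbours, and the newly added vertices, and never creates an edge inside $V(H)$, the 2-extensions at the various $v_e$ are independent and can all be performed in turn. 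By definition the resulting graph $G$ is then an extended bipartite graph, and it is obtained from $H$ in polynomial time.

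Next I would analyse a single gadget. The 2-extension at $v_e$ (with $e=ab$) replaces $v_e$ by a path $a-x_1-x_2-x_3-x_4-b$ and a vertex $x$ adjacent to $x_1,x_2,x_3,x_4$. The crucial quantity is the largest number of the internal vertices $x_1,x_2,x_3,x_4,x$ that a stable set can contain, as a function of which of $a,b$ it already uses. A short case check --- if the stable set uses $x$ then it can use none of the $x_i$, and otherwise it picks an independent set in a subpath of $x_1x_2x_3x_4$ --- shows that the gadget contributes $2$ in every case except when both $a$ and $b$ are in the stable set, in which case it contributes only $1$.

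Summing these contributions over all edges of $H$, and using that $a$ and $b$ are never adjacent in $G$ (so that the trace $S$ of a stable set on $V(H)$ may be an arbitrary subset of $V(H)$, not merely a stable set of $H$), I get
\[
\alpha(G)=\max_{S\subseteq V(H)}\big(|S|+2|E(H)|-e_H(S)\big)=2|E(H)|+\max_{S\subseteq V(H)}\big(|S|-e_H(S)\big),
\]
where $e_H(S)$ is the number of edges of $H$ with both ends in $S$. It then remains to prove the identity $\max_{S}\big(|S|-e_H(S)\big)=\alpha(H)$. The inequality $\ge$ is witnessed by a maximum stable set of $H$; for $\le$, note that deleting from $S$ a vertex that has some neighbour inside $S$ (hence degree $d\ge 1$ in $H[S]$) changes $|S|-e_H(S)$ by $d-1\ge 0$, so repeating this until $H[S]$ is edgeless does not decrease the quantity and ends at a stable set, of size at most $\alpha(H)$. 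Hence $\alpha(G)=2|E(H)|+\alpha(H)$, and since $|E(H)|$ is known in advance, any algorithm computing $\alpha(G)$ also computes $\alpha(H)$, which gives the claimed NP-hardness.

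I expect the main obstacle to be the gadget analysis together with the bookkeeping leading to the identity $\max_S\big(|S|-e_H(S)\big)=\alpha(H)$. The decisive feature is that the gadget merely penalises (by one) the choice of both $a$ and $b$ rather than forbidding it; this is exactly what allows $S$ to range over all subsets and makes the final expression collapse to $\alpha(H)$, and it would be easy to spoil the argument by mis-counting the gadget's contribution by an additive constant. Verifying that $B$ is bipartite and that every required 2-extension is legal is routine but should be recorded explicitly, since it is what certifies that $G$ actually lies in the class of extended bipartite graphs.
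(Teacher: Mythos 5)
Your proposal is correct and follows exactly the paper's reduction: subdivide every edge of an arbitrary graph once to get a bipartite graph, apply a 2-extension at each subdivision vertex, and show the stability number shifts by $2|E(H)|$. The only difference is that you spell out the gadget case analysis and the identity $\max_S(|S|-e_H(S))=\alpha(H)$, which the paper leaves as ``easy to check''; both your verifications are accurate.
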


\begin{proof}
 Suppose there is a polynomial-time algorithm $\cal A$ for our
 problem. We prove the theorem by using $\cal A$ as a subroutine to
 solve the problem of computing the stability number of a general
 graph $G$ in polynomial time. First build $B$ by subdividing every
 edge of $G$ once. Let $X$ be the set of vertices of degree two in
 $B$ that arise from the subdivisions. Note that $B$ is bipartite,
 and $(X, V(B) \smallsetminus X)$ is a bipartition of $B$. Now build
 a graph $H$ from $B$ by applying a 2-extension to every vertex of
 $X$. By construction, $H$ is an extended bipartite graph, and it is
 easy to check that $\alpha(H) = \alpha(G) + 2 |E(G)|$. Thus, $\cal A$
 indeed allows one to compute the stability number of a general graph
 in polynomial time.
\end{proof}

Despite this negative result, we can use (a variant of) a proper 2-cutset
in the special case of \{ISK4,wheel\}-free graphs, mainly because the
basic classes are very restricted. We rely on what is called a
``trigraph,'' which is a graph where some edges are left ``undecided''
(the notion is from \cite{chudnovsky:these,chudnovsky:trigraphs}, and formal definitions are given in Section~\ref{s:trigraphs}).
The idea is as follows. When $G$ is a graph, $a$ and $b$ are non-adjacent vertices of $G$ whose deletion yields a disconnected graph, and $V(G) \smallsetminus \{a,b\}$ is partitioned into non-empty sets $X$ and $Y$ such that there are no edges between $X$ and $Y$, we build a (tri)graph on the vertex set $X \cup \{a,b\}$by keeping the edges of $G$, and by leaving the adjacency between $a$ and $b$ undecided. We
give weights to $a$ and $b$, and we also give weights specific to the pair
$\{a,b\}$. Roughly speaking, the weights associated with the vertices $a$ and $b$ and the pair $\{a,b\}$ ``encode'' the maximum weight of a stable set in graphs $G[Y]$, $G[Y \cup \{a\}]$, $G[Y \cup \{b\}]$, and $G[Y \cup \{a,b\}]$. We therefore need weights,
undecided adjacencies, and a way to handle the notion of the weight of
a stable set in this context. All this is captured in the notion of
weighted trigraph (we postpone the formal definition to Section~\ref{s:swi}).
We remark that a similar idea was previously used in~\cite{bulls-stable} in the context of bull-free graphs. However, the definition of a weighted trigraph was simpler in~\cite{bulls-stable} than in the present paper, as was the definition of the weight of a stable set in a weighted trigraph. The reason for this is that the decompositions that appear in the context of bull-free graphs are more convenient than proper 2-cutsets for the purposes of computing the stability number.

We complete the introduction by giving an outline of the paper.
In Section~\ref{s:trigraphs}, we define trigraphs and introduce some basic trigraph-related
terminology that we need. In Section~\ref{s:swi}, we define weighted trigraphs, explain
how to compute the weight of a set of vertices in a weighted trigraph, and prove several
results about weighted stable sets in weighted trigraphs. These properties are more complicated
than one might expect because of the weights associated with the undecided adjacencies. Because of
these weights, the weight of a set is not a monotone function (one could
increase the weight of a set by taking a subset). For this reason, all proofs need
to be written very carefully. In Section~\ref{s:dec}, we state a decomposition theorem for
\{ISK4,wheel\}-free trigraphs (see Theorem~\ref{thm-decomp}). 
Since the proof of this theorem is very similar to that of 
the decomposition theorem for ISK4-free graphs from~\cite{MR2927414},
we omit the proof of Theorem~\ref{thm-decomp} in the present paper. 
The interested reader can find a detailed proof in~\cite{ISK4wheel-decomp}. 
Interestingly, the fact
that our theorem concerns trigraphs rather than graphs does not substantially complicate the proof, even though
our theorem is formally stronger than the corresponding one for graphs. On the other hand, the fact that we
restrict our attention to the wheel-free case significantly simplifies our proof. We complete Section~\ref{s:dec}
by using Theorem~\ref{thm-decomp} to prove an ``extreme'' decomposition theorem for \{ISK4,wheel\}-free trigraphs
(see Theorem~\ref{alg-extreme-decomp} and Corollary~\ref{cor-extreme-decomp}). 
Roughly speaking, our extreme decomposition theorem states 
that every \{ISK4,wheel\}-free trigraph is either basic or admits a decomposition such that one
block of decomposition is basic. In Section~\ref{s:op}, we give a transformation from a weighted trigraph to a weighted 
graph that preserves the stability number. In Section~\ref{s:comp}, we use this transformation to compute the stability number
in our basic trigraphs. Again, the proofs have to be done carefully, because as proven at the very end of the
paper (see Theorem~\ref{Bip-NP}), it is NP-hard to compute the stability number of weighted bipartite
trigraphs, and so one should be suspicious of ``simple'' classes of trigraphs in our context. In Section~\ref{s:compClass},
we prove our main technical result: there is an $O(|V(G)|^7)$-time algorithm to compute the stability number of an
input weighted \{ISK4,wheel\}-free trigraph $(G,w)$ (see Theorem~\ref{thm:main}).
Since every weighted \{ISK4,wheel\}-free graph can be seen as a weighted \{ISK4,wheel\}-free trigraph, the algorithm from
Theorem~\ref{thm:main} can also be applied to \{ISK4,wheel\}-free graphs. Together with Proposition~\ref{prop:number-to-set}, this
yields an $O(|V(G)|^8)$-time algorithm that finds a maximum weight stable set of an input weighted \{ISK4,wheel\}-free graph
(see Corollary~\ref{thm:main-graphs}). In Section~\ref{s:bip}, we prove the above-mentioned Theorem~\ref{Bip-NP}, which states
that it is NP-hard to compute the stability number of a weighted bipartite trigraph.

\section{Trigraphs}
\label{s:trigraphs}
Given a set $S$, we denote by ${S \choose 2}$ the set of all subsets of $S$ of size two. A {\em trigraph} is an ordered pair $G = (V(G),\theta_G)$, where $V(G)$ is a finite set, called the {\em vertex set} of $G$ (members of $V(G)$ are called {\em vertices} of $G$), and $\theta_G:{V(G) \choose 2} \rightarrow \{-1,0,1\}$ is a function, called the {\em adjacency function} of $G$. The {\em null} trigraph is the trigraph whose vertex set is empty; a {\em non-null} trigraph is any trigraph whose vertex set is non-empty. If $G$ is a trigraph and $u,v \in V(G)$ are distinct, we usually write $uv$ instead of $\{u,v\}$ (note that this means that $uv = vu$), and furthermore:
\begin{itemize}
\item if $\theta_G(uv) = 1$, we say that $uv$ is a {\em strongly adjacent pair} of $G$, or that $u$ and $v$ are {\em strongly adjacent} in $G$, or that $u$ is {\em strongly adjacent} to $v$ in $G$, or that $v$ is a {\em strong neighbor} of $u$ in $G$, or that $u$ and $v$ are the {\em endpoints of a strongly adjacent pair} of $G$;
\item if $\theta_G(uv) = 0$, we say that $uv$ is a {\em semi-adjacent pair} of $G$, or that $u$ and $v$ are {\em semi-adjacent} in $G$, or that $u$ is {\em semi-adjacent} to $v$ in $G$, or that $v$ is a {\em weak neighbor} of $u$ in $G$, or that $u$ and $v$ are the {\em endpoints of a semi-adjacent pair} of $G$;
\item if $\theta_G(uv) = -1$, we say that $uv$ is a {\em strongly anti-adjacent pair} of $G$, or that $u$ and $v$ are {\em strongly anti-adjacent} in $G$, or that $u$ is {\em strongly anti-adjacent} to $v$ in $G$, or that $v$ is a {\em strong anti-neighbor} of $u$ in $G$, or that $u$ and $v$ are the {\em endpoints of a strongly anti-adjacent pair} of $G$;
\item if $\theta_G(uv) \geq 0$, we say that $uv$ is an {\em adjacent pair} of $G$, or that $u$ and $v$ are {\em adjacent} in $G$, or that $u$ is {\em adjacent} to $v$ in $G$, or that $v$ is a {\em neighbor} of $u$ in $G$, or that $u$ and $v$ are the {\em endpoints of an adjacent pair} of $G$;
\item if $\theta_G(uv) \leq 0$, we say that $uv$ is an {\em anti-adjacent pair} of $G$, or that $u$ and $v$ are {\em anti-adjacent} in $G$, or that $u$ is {\em anti-adjacent} to $v$ in $G$, or that $v$ is an {\em anti-neighbor} of $u$ in $G$, or that $u$ and $v$ are the {\em endpoints of an anti-adjacent pair} of $G$.
\end{itemize}
\noindent
Note that a semi-adjacent pair is simultaneously an adjacent pair and an anti-adjacent pair. One can think of strongly adjacent pairs as ``edges,'' of strongly anti-adjacent pairs as ``non-edges,'' and of semi-adjacent pairs as ``optional edges.'' Clearly, any graph can be thought of as a trigraph: a graph is simply a trigraph with no semi-adjacent pairs, that is, the adjacency function of a graph $G$ is a mapping from ${V(G) \choose 2}$ to the set $\{-1,1\}$.

Given a trigraph $G$, a vertex $u \in V(G)$, and a set $X \subseteq V(G) \smallsetminus \{u\}$, we say that $u$ is {\em complete} (respectively: {\em strongly complete, anti-complete, strongly anti-complete}) to $X$ in $G$ provided that $u$ is adjacent (respectively: strongly adjacent, anti-adjacent, strongly anti-adjacent) to every vertex of $X$ in $G$. Given a trigraph $G$ and disjoint sets $X,Y \subseteq V(G)$, we say that $X$ is {\em complete} (respectively: {\em strongly complete, anti-complete, strongly anti-complete)} to $Y$ in $G$ provided that every vertex of $X$ is complete (respectively: strongly complete, anti-complete, strongly anti-complete) to $Y$ in $G$.

Isomorphism between trigraphs is defined in the natural way. The {\em complement} of a trigraph $G = (V(G),\theta_G)$ is the trigraph $\overline{G} = (V(\overline{G}),\theta_{\overline{G}})$ such that $V(\overline{G}) = V(G)$ and $\theta_{\overline{G}} = -\theta_G$. Thus, $\overline{G}$ is obtained from $G$ by turning all strongly adjacent pairs of $G$ into strongly anti-adjacent pairs, and turning all strongly anti-adjacent pairs of $G$ into strongly adjacent pairs; semi-adjacent pairs of $G$ remain semi-adjacent in $\overline{G}$.

Given trigraphs $G$ and $\widetilde{G}$, we say that $\widetilde{G}$ is a {\em semi-realization} of $G$ provided that $V(\widetilde{G}) = V(G)$
and for all distinct $u,v \in V(\widetilde{G}) = V(G)$, we have that if $\theta_G(uv) = 1$ then $\theta_{\widetilde{G}}(uv) = 1$, and if $\theta_G(uv) = -1$ then $\theta_{\widetilde{G}}(uv) = -1$. Thus, a semi-realization of a trigraph $G$ is any trigraph that can be obtained from $G$ by ``deciding'' the adjacency of some semi-adjacent pairs of $G$, that is, by possibly turning some semi-adjacent pairs of $G$ into strongly adjacent or strongly anti-adjacent pairs. (In particular, every trigraph is a semi-realization of itself.) A {\em realization} of a trigraph $G$ is a graph that is a semi-realization of $G$. Thus, a realization of a trigraph $G$ is any graph that can be obtained by ``deciding'' the adjacency of all semi-adjacent pairs of $G$, that is, by turning each semi-adjacent pair of $G$ into an edge or a non-edge. Clearly, if a trigraph $G$ has $m$ semi-adjacent pairs, then $G$ has $3^m$ semi-realizations and $2^m$ realizations. The {\em full realization} of a trigraph $G$ is the graph obtained from $G$ by turning all semi-adjacent pairs of $G$ into strongly adjacent pairs (i.e., edges), and the {\em null realization} of $G$ is the graph obtained from $G$ by turning all semi-adjacent pairs of $G$ into strongly anti-adjacent pairs (i.e., non-edges).

A {\em clique} (respectively: {\em strong clique}, {\em stable set}, {\em strongly stable set}) of a trigraph $G$ is a set of pairwise adjacent (respectively: strongly adjacent, anti-adjacent, strongly anti-adjacent) vertices of $G$. Note that any subset of $V(G)$ of size at most one is both a strong clique and a strongly stable set of $G$. Note also that if $S \subseteq V(G)$, then $S$ is a (strong) clique of $G$ if and only if $S$ is a (strongly) stable set of $\overline{G}$. Note furthermore that if $K$ is a strong clique and $S$ is a stable set of $G$, then $|K \cap S| \leq 1$; similarly, if $K$ is a clique and $S$ is a strongly stable set of $G$, then $|K \cap S| \leq 1$. However, if $K$ is a clique and $S$ is a stable set of $G$, then we are only guaranteed that vertices in $K \cap S$ are pairwise semi-adjacent to each other, and it is possible that $|K \cap S| \geq 2$. A {\em triangle} (respectively: {\em strong triangle}) is a clique (respectively: strong clique) of size three.

Given a trigraph $G$ and a set $X \subseteq V(G)$, the {\em subtrigraph of $G$ induced by $X$}, denoted by $G[X]$, is the trigraph with vertex set $X$ and adjacency function $\theta_G \upharpoonright {X \choose 2}$, where for a function $f:A\to B$ and a set $A'\subseteq A$, we denote by $f\upharpoonright  A'$ the restriction of $f$ to $A'$. If $H = G[X]$ for some $X \subseteq V(G)$, we also say that $H$ is an {\em induced subtrigraph} of $G$; when convenient, we relax this definition and say that $H$ is an induced subtrigraph of $G$ provided that there is some set $X \subseteq V(G)$ such that $H$ is isomorphic to $G[X]$. If $v_1,\ldots,v_k$ are vertices of a trigraph $G$, we often write $G[v_1,\ldots,v_k]$ instead of $G[\{v_1,\ldots,v_k\}]$. Further, for a trigraph $G$ and a set $X \subseteq V(G)$, we set $G \smallsetminus X = G[V(G) \smallsetminus X]$; for $v \in V(G)$, we often write $G \smallsetminus v$ instead of $G \smallsetminus \{v\}$. The trigraph $G \smallsetminus X$ (respectively: $G \smallsetminus v$) is called the subtrigraph of $G$ obtained by {\em deleting} $X$ (respectively: by {\em deleting} $v$).

If $H$ is a graph, we say that a trigraph $G$ is an {\em $H$-trigraph} if some realization of $G$ is (isomorphic to) $H$. Further, if $H$ is a graph and $G$ a trigraph, we say that $G$ is {\em $H$-free} provided that all realizations of $G$ are $H$-free (equivalently: provided that no induced subtrigraph of $G$ is an $H$-trigraph). If $\mathcal{H}$ is a family of graphs, we say that a trigraph $G$ is {\em $\mathcal{H}$-free} provided that $G$ is $H$-free for all graphs $H \in \mathcal{H}$. In particular, a trigraph is {\em ISK4-free} (respectively: {\em wheel-free}, {\em \{ISK4,wheel\}-free}) if all its realizations are ISK4-free (respectively: wheel-free, \{ISK4,wheel\}-free).

A trigraph is {\em connected} if its full realization is a connected graph. A trigraph is {\em disconnected} if it is not connected. A {\em component} of a non-null trigraph $G$ is any (inclusion-wise) vertex-maximal connected induced subtrigraph of $G$. Clearly, if $H$ is an induced subtrigraph of a non-null trigraph $G$, then we have that $H$ is a component of $G$ if and only if the full realization of $H$ is a component of the full realization of $G$.

A trigraph is a {\em path} if at least one of its realizations is a path. A trigraph is a {\em narrow path} if its full realization is a path. We often denote a path $P$ by $v_0-v_1-\dots-v_k$ (with $k \geq 0$), where $v_0,v_1,\dots,v_k$ are the vertices of $P$ that appear in that order in some realization $\widetilde{P}$ of $P$ such that $\widetilde{P}$ is a path. The {\em endpoints} of a narrow path are the endpoints of its full realization; if $a$ and $b$ are the endpoints of a narrow path $P$, then we also say that $P$ is a narrow path {\em between} $a$ and $b$. A {\em path} (respectively: {\em narrow path}) in a trigraph $G$ is an induced subtrigraph $P$ of $G$ such that $P$ is a path (respectively: narrow path).

Note that if $G$ is a connected trigraph, then for all vertices $a,b \in V(G)$, there exists a narrow path between $a$ and $b$ in $G$. (To see this, consider the full realization $\widetilde{G}$ of $G$. $\widetilde{G}$ is connected, and so there is a path in $\widetilde{G}$ between $a$ and $b$; let $P$ be a shortest such path in $\widetilde{G}$. The minimality of $P$ guarantees that $P$ is an induced path of $\widetilde{G}$. But now $G[V(P)]$ is a narrow path of $G$ between $a$ and $b$.)

A {\em hole} of a trigraph $G$ is an induced subtrigraph $C$ of $G$ such that some realization of $C$ is a chordless cycle of length at least four. We often denote a hole $C$ of $G$ by $v_0-v_1-\dots-v_{k-1}-v_0$ (with $k \geq 4$ and indices in $\mathbb{Z}_k$), where $v_0,v_1,\dots,v_{k-1}$ are the vertices of $C$ that appear in that order in some realization $\widetilde{C}$ of $C$ such that $\widetilde{C}$ is a chordless cycle of length at least four.

A {\em cutset} of a trigraph $G$ is a (possibly empty) set $C \subseteq V(G)$ such that $G \smallsetminus C$ is disconnected. A {\em cut-partition} of a trigraph $G$ is a partition $(A,B,C)$ of $V(G)$ such that $A$ and $B$ are non-empty ($C$ may possibly be empty), and $A$ is strongly anti-complete to $B$. Note that if $(A,B,C)$ is a cut-partition of $G$, then $C$ is a cutset of $G$. Conversely, every cutset of $G$ induces at least one cut-partition of $G$. A {\em clique-cutset} of a trigraph $G$ is a (possibly empty) strong clique $C$ of $G$ such that $G \smallsetminus C$ is disconnected. A {\em cut-vertex} of a trigraph $G$ is a vertex $v \in V(G)$ such that $G \smallsetminus v$ is disconnected. Note that if $v$ is a cut-vertex of $G$, then $\{v\}$ is a clique-cutset of $G$. A {\em stable 2-cutset} of a trigraph $G$ is cutset of $G$ that is a stable set of size two. We remark that if $C$ is a cutset of a trigraph $G$ such that $|C| \leq 2$, then $C$ is either a clique-cutset or a stable 2-cutset of $G$.

A graph is {\em series-parallel} if it does not contain any subdivision of $K_4$ as a (not necessarily induced) subgraph. A trigraph is {\em series-parallel} if its full realization is series-parallel (equivalently: if all its realizations are series-parallel).

A {\em bipartite trigraph} is a trigraph $G$ whose vertex set can be partitioned into two (possibly empty) strongly stable sets, $A$ and $B$; under these circumstances, $(A,B)$ is said to be a {\em bipartition} of the bipartite trigraph $G$. 
If, in addition, the two strongly stable sets $A$ and $B$ forming a bipartition are strongly complete to each other, $G$ is said to be
a {\em complete bipartite trigraph}.
Note that non-null complete bipartite trigraphs have precisely two bipartitions: if $(A,B)$ is a bipartition of a complete bipartite trigraph $G$, then so is $(B,A)$, and $G$ has no other bipartitions. Furthermore, note that bipartite trigraphs may have semi-adjacent pairs, but complete bipartite trigraphs cannot. Thus, complete bipartite trigraphs are in fact complete bipartite graphs.

The {\em line graph} of a graph $H$, denoted by $L(H)$, is the graph whose vertices are the edges of $H$, and in which two vertices (i.e., edges of $H$) are adjacent if they share an endpoint in $H$. A {\em line trigraph} of a graph $H$ is a trigraph $G$ whose full realization is (isomorphic to) $L(H)$, and all of whose triangles are strong. A trigraph $G$ is said to be a {\em line trigraph} provided there is a graph $H$ such that $G$ is a line trigraph of $H$.

\section{Stable sets in weighted trigraphs}
\label{s:swi}

In what follows, $\mathbb{N}$ is the set of non-negative integers. Given a trigraph $G$, we define
\begin{displaymath}
\begin{array}{rcl}
D(G) & = & V(G) \cup \{(u,v) \mid u,v \in V(G), u \neq v\} \cup {V(G) \choose 2}.
\end{array}
\end{displaymath}
A {\em weight function} for a trigraph $G$ is any function $w:D(G) \rightarrow \mathbb{N}$ that satisfies the following two properties:
\begin{itemize}
\item for all distinct $u,v \in V(G)$, if $uv$ is not a semi-adjacent pair of $G$, then $w(u,v) = w(v,u) = w(uv) = 0$;
\item all distinct $u,v \in V(G)$ satisfy $w(u,v)\leq w(uv)$.
\end{itemize}
A {\em weighted trigraph} is an ordered pair $(G,w)$ where $G$ is a trigraph and $w$ is a weight function for $G$.

\begin{sloppypar}
Essentially, a weight function $w$ assigns a non-negative integer weight $w(u)$ to each vertex $u$ of the trigraph $G$, and for each semi-adjacent pair $uv$, there are three non-negative integer weights associated with it, namely $w(u,v)$, $w(v,u)$, and $w(uv)$, and these weights must satisfy $\max\{w(u,v),w(v,u)\} \leq w(uv)$. If $uv$ is a strongly adjacent or strongly anti-adjacent pair, then we have $w(u,v) = w(v,u) = w(uv) = 0$. (Zero weights are assigned to strongly adjacent and strongly anti-adjacent pairs for the purposes of making calculations notationally simpler, but only vertices and semi-adjacent pairs actually ``count.'')
\end{sloppypar}

Note that if a trigraph $G$ is a semi-realization of a trigraph $G'$, then every weight function for $G$ is also a weight function for $G'$ (however, a weight function for $G'$ need not be a weight function for $G$).

If $(G,w)$ is a weighted trigraph, and $H$ is an induced subtrigraph of $G$, then clearly, $(H,w \upharpoonright D(H))$ is also a weighted trigraph; to simplify notation, we often write $(H,w)$ instead of $(H,w \upharpoonright D(H))$.

Given a weighted trigraph $(G,w)$ and a set $S \subseteq V(G)$, the {\em weight} of $S$ with respect to $(G,w)$, denoted by $\llbracket S \rrbracket_{(G,w)}$, is defined to be \begin{displaymath}
\begin{array}{rcl}
\llbracket S \rrbracket_{(G,w)} & = & \Big(\sum\limits_{u \in S} w(u)\Big)+\Big(\sum\limits_{u \in S} \sum\limits_{v \in V(G) \smallsetminus S} w(u,v)\Big)+\Big(\sum\limits_{uv \in {V(G) \smallsetminus S \choose 2}} w(uv)\Big).
\end{array}
\end{displaymath}
Note that if $(G,w)$ is a weighted trigraph such that $G$ has no semi-adjacent pairs (that is, such that $G$ is a graph), then for all $S \subseteq V(G)$, we have that $\llbracket S \rrbracket_{(G,w)} = \sum\limits_{u \in S} w(u)$. Thus, our definition of a weight of a set of vertices in a weighted trigraph indeed generalizes the usual notion of the weight of a set in a weighted graph.

It is easy to see that for all weighted trigraphs $(G,w)$, all induced subtrigraphs $H$ of $G$, and all sets $S \subseteq V(H)$, we have that $\llbracket S \rrbracket_{(H,w)} \leq \llbracket S \rrbracket_{(G,w)}$. Strict inequality may hold because the weight of a set in a weighted trigraph depends not only on what is in the set, but also on what is outside of it. Furthermore, if $(G,w)$ is a weighted trigraph and $S_1 \subsetneqq S_2 \subseteq V(G)$, there is in general no relationship between $\llbracket S_1 \rrbracket_{(G,w)}$ and $\llbracket S_2 \rrbracket_{(G,w)}$, that is, any one of the following is possible: $\llbracket S_1 \rrbracket_{(G,w)} < \llbracket S_2 \rrbracket_{(G,w)}$, $\llbracket S_1 \rrbracket_{(G,w)} = \llbracket S_2 \rrbracket_{(G,w)}$, and $\llbracket S_1 \rrbracket_{(G,w)} > \llbracket S_2 \rrbracket_{(G,w)}$.

The {\em stability number} of a weighted trigraph $(G,w)$, denoted by $\alpha(G,w)$, is defined to be
\begin{displaymath}
\begin{array}{rcl}
\alpha(G,w) & = & \max\{\llbracket S \rrbracket_{(G,w)} \mid \text{$S$ is a stable set of $G$}\}.
\end{array}
\end{displaymath}

A {\em zero-vertex} of a weighted trigraph $(G,w)$ is any vertex $u \in V(G)$ such that $w(u) = 0$.

\begin{proposition} \label{prop-delete-vertices-weight} Let $(G,w)$ be a weighted trigraph, and let $Z,S \subseteq V(G)$. Then $\llbracket S \rrbracket_{(G,w)} \leq \llbracket S \smallsetminus Z \rrbracket_{(G,w)}+\sum\limits_{u \in Z} w(u)$.
\end{proposition}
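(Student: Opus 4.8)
The plan is to compare $\llbracket S\rrbracket_{(G,w)}$ with $\llbracket S\smallsetminus Z\rrbracket_{(G,w)}$ term by term, working directly from the three-part defining formula for the weight of a set and using exactly the two defining properties of a weight function: non-negativity, and the inequality $w(u,v)\le w(uv)$ for all distinct $u,v$. Informally, moving a vertex out of $S$ can only turn boundary-crossing contributions $w(u,v)$ into the (at least as large) pair contributions $w(uv)$, so the weight of the set can only go up, except for the vertex weights themselves, which are lost; this is exactly what the claimed inequality quantifies.

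First I would reduce to the case $Z\subseteq S$. Indeed, setting $Z'=Z\cap S$ we have $S\smallsetminus Z=S\smallsetminus Z'$, and since all weights are non-negative, $\sum_{u\in Z'}w(u)\le\sum_{u\in Z}w(u)$; hence the inequality for $Z'$ implies the one for $Z$. So from now on I would assume $Z\subseteq S$ and write $T=S\smallsetminus Z$ and $R=V(G)\smallsetminus S$, so that $V(G)\smallsetminus T=R\cup Z$ is a disjoint union.

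Then I would expand $\llbracket S\rrbracket_{(G,w)}-\llbracket T\rrbracket_{(G,w)}$ via the definition. The difference of the vertex-sum parts is exactly $\sum_{u\in Z}w(u)$ (because $T=S\smallsetminus Z$ with $Z\subseteq S$), which already supplies the right-hand side, so it remains to show that the difference of the two ``interaction'' parts is at most $0$. Splitting the relevant sums along $S=T\cup Z$ and $V(G)\smallsetminus T=R\cup Z$ and cancelling common terms, I expect this residual difference to collapse to
\[
\sum_{u\in Z}\sum_{v\in R}w(u,v)\;-\;\sum_{u\in T}\sum_{v\in Z}w(u,v)\;-\;\sum_{uv\in{Z \choose 2}}w(uv)\;-\;\sum_{u\in R,\,v\in Z}w(uv).
\]
The last three sums are non-negative and enter with a minus sign, so they cause no difficulty. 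The single positive contribution $\sum_{u\in Z}\sum_{v\in R}w(u,v)$ is then absorbed by the fourth sum: for each $z\in Z$ and $v\in R$, the pair $\{z,v\}$ contributes the ordered weight $w(z,v)$ to the positive sum and the pair weight $w(zv)$ to the negative one, and $w(z,v)\le w(zv)$ makes the former dominated by the latter. Hence the residual difference is $\le 0$, finishing the proof.

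The step I expect to be the main obstacle is precisely this bookkeeping, which is the sort of care the introduction warns about: because $\llbracket\cdot\rrbracket_{(G,w)}$ treats the ordered weights $w(u,v)$ (for pairs crossing the boundary of $S$) differently from the unordered pair weights $w(uv)$ (for pairs lying entirely outside $S$), one must match up exactly which pairs change their role when $Z$ is moved from $S$ into the complement, keeping the pairs inside $Z$, between $Z$ and $T$, and between $Z$ and $R$ separate, and verify that every switch is paid for by $w(z,v)\le w(zv)$. As a cleaner alternative that sidesteps most of this, I could instead prove the case $|Z|=1$ first — where the computation above collapses to $\sum_{v\in R}\bigl(w(z,v)-w(zv)\bigr)-\sum_{u\in T}w(u,z)\le 0$ — and then iterate over the elements of $Z$, using that deleting a vertex outside the current set leaves its weight unchanged while only adding a non-negative term to the right-hand side.
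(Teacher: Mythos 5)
Your proposal is correct and follows essentially the same route as the paper: reduce to $Z\subseteq S$ using non-negativity of vertex weights, expand $\llbracket S\rrbracket_{(G,w)}-\llbracket S\smallsetminus Z\rrbracket_{(G,w)}$ from the definition, and observe that beyond the term $\sum_{u\in Z}w(u)$ the residual is $\sum_{u\in Z}\sum_{v\in R}\bigl(w(u,v)-w(uv)\bigr)-\sum_{u\in T}\sum_{v\in Z}w(u,v)-\sum_{uv\in{Z\choose 2}}w(uv)\le 0$ by the weight-function axioms. Your computed residual matches the paper's exactly (after regrouping), so there is nothing further to add.
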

\begin{proof}
Since $w(u) \geq 0$ for all $u \in V(G)$, we may assume that $Z \subseteq S$. Using the definition of $\llbracket S \rrbracket_{(G,w)}$ and $\llbracket S \smallsetminus Z \rrbracket_{(G,w)}$, we obtain the following:
\begin{displaymath}
\begin{array}{rcl}
\llbracket S \rrbracket_{(G,w)} & = & \Big(\sum\limits_{u \in S} w(u)\Big)+\Big(\sum\limits_{u \in S} \sum\limits_{v \in V(G) \smallsetminus S} w(u,v)\Big)+
\\
& & +\Big(\sum\limits_{uv \in {V(G) \smallsetminus S \choose 2}} w(uv)\Big)
\\
\\
& = & \Big(\sum\limits_{u \in S \smallsetminus Z} w(u)\Big)+\Big(\sum\limits_{u \in Z} w(u)\Big)+
\\
& & +\Big(\sum\limits_{u \in S \smallsetminus Z} \sum\limits_{v \in V(G) \smallsetminus (S \smallsetminus Z)} w(u,v)\Big)-\Big(\sum\limits_{u \in S \smallsetminus Z} \sum\limits_{v \in Z} w(u,v)\Big)+
\\
& & +\Big(\sum\limits_{u \in Z} \sum\limits_{v \in V(G) \smallsetminus S} w(u,v)\Big)+\Big(\sum\limits_{uv \in {V(G) \smallsetminus (S \smallsetminus Z) \choose 2}} w(uv)\Big)-
\\
& & -\Big(\sum\limits_{uv \in {Z \choose 2}} w(uv)\Big)-\Big(\sum\limits_{u \in Z} \sum\limits_{v \in V(G) \smallsetminus S} w(uv)\Big)
\\
\\
& = & \llbracket S \smallsetminus Z \rrbracket_{(G,w)}+\Big(\sum\limits_{u \in Z} w(u)\Big)-\Bigg(\Big(\sum\limits_{u \in S \smallsetminus Z} \sum\limits_{v \in Z} w(u,v)\Big)+
\\
& & +\Big(\sum\limits_{u \in Z} \sum\limits_{v \in V(G) \smallsetminus S} (w(uv)-w(u,v))\Big)+\Big(\sum\limits_{uv \in {Z \choose 2}} w(uv)\Big)\Bigg).
\end{array}
\end{displaymath}
By the definition of a weight function, we have that $w(uv) \geq w(u,v) \geq 0$ for all distinct $u,v \in V(G)$. The calculation above now implies that $\llbracket S \rrbracket_{(G,w)} \leq \llbracket S \smallsetminus Z \rrbracket_{(G,w)}+\sum\limits_{u \in Z} w(u)$, which is what we needed.
\end{proof}
\begin{proposition} \label{prop-delete-zero} For all weighted trigraphs $(G,w)$, there exists a stable set $S$ of $G$ such that $S$ contains no zero-vertices of $(G,w)$ and $\llbracket S \rrbracket_{(G,w)} = \alpha(G,w)$.
\end{proposition}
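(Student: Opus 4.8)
The plan is to start from any maximum weight stable set and argue that deleting its zero-vertices cannot decrease the weight, so the resulting (zero-vertex-free) stable set is still optimal. Concretely, I would first fix a stable set $S$ of $G$ with $\llbracket S \rrbracket_{(G,w)} = \alpha(G,w)$, which exists by the definition of $\alpha(G,w)$ as a maximum over the finitely many stable sets of $G$. I would then let $Z$ be the set of all zero-vertices of $(G,w)$ that happen to lie in $S$, and I would consider the candidate set $S \smallsetminus Z$. By construction $S \smallsetminus Z$ is a subset of the stable set $S$, hence itself a stable set of $G$, and it contains no zero-vertices, so it is exactly the kind of set we are after.

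The crux is to verify that passing from $S$ to $S \smallsetminus Z$ does not lose any weight. Here the natural worry is that the weight of a set in a weighted trigraph is \emph{not} monotone under deletion: as the text emphasizes, removing vertices can in principle raise the contribution of the semi-adjacent-pair weights, since pairs that were partly inside $S$ may move entirely outside it. The point, however, is that Proposition~\ref{prop-delete-vertices-weight} already quantifies exactly how much weight a deletion can cost. Applying it with this particular $Z$ gives
\begin{displaymath}
\llbracket S \rrbracket_{(G,w)} \;\leq\; \llbracket S \smallsetminus Z \rrbracket_{(G,w)} + \sum_{u \in Z} w(u),
\end{displaymath}
and since every $u \in Z$ is a zero-vertex, the sum on the right is $0$. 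Thus $\llbracket S \rrbracket_{(G,w)} \leq \llbracket S \smallsetminus Z \rrbracket_{(G,w)}$.

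To close the argument I would combine this with the trivial upper bound coming from optimality of $S$: because $S \smallsetminus Z$ is a stable set, $\llbracket S \smallsetminus Z \rrbracket_{(G,w)} \leq \alpha(G,w) = \llbracket S \rrbracket_{(G,w)}$. The two inequalities force $\llbracket S \smallsetminus Z \rrbracket_{(G,w)} = \alpha(G,w)$, so $S \smallsetminus Z$ is the desired stable set. I do not expect any genuine obstacle in the write-up: the only delicate issue, the non-monotonicity of $\llbracket \cdot \rrbracket_{(G,w)}$, is precisely what Proposition~\ref{prop-delete-vertices-weight} was designed to tame, and once that proposition is invoked with $Z$ equal to the zero-vertices of $S$ the whole statement follows from a two-line chain of inequalities. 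The main thing to be careful about is simply to take $Z$ to be the zero-vertices lying \emph{inside} $S$ (rather than all zero-vertices of $G$), so that $S \smallsetminus Z$ is well defined and automatically free of zero-vertices.
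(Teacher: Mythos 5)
Your proposal is correct and follows essentially the same route as the paper: fix a maximum weight stable set, remove the zero-vertices, and invoke Proposition~\ref{prop-delete-vertices-weight} to see that no weight is lost. (The only cosmetic difference is that the paper takes $Z$ to be the set of \emph{all} zero-vertices rather than just those in $S$; since $S \smallsetminus Z$ is the same set either way, this changes nothing.)
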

\begin{proof}
Fix a weighted trigraph $(G,w)$ and a stable set $S$ of $G$ such that $\llbracket S \rrbracket_{(G,w)} = \alpha(G,w)$. Let $Z$ be the set of all zero-vertices of $G$. Then $S \smallsetminus Z$ is a stable set of $G$ that contains no zero vertices of $G$, and clearly, we have that $\llbracket S \smallsetminus Z \rrbracket_{(G,w)} \leq \alpha(G,w)$. On the other hand, Proposition~\ref{prop-delete-vertices-weight} implies that $\alpha(G,w) = \llbracket S \rrbracket_{(G,w)} \leq \llbracket S \smallsetminus Z \rrbracket_{(G,w)}+\sum\limits_{u \in Z} w(u) = \llbracket S \smallsetminus Z \rrbracket_{(G,w)}$. It follows that $\llbracket S \smallsetminus Z \rrbracket_{(G,w)} = \alpha(G,w)$, and so $S \smallsetminus Z$ is the stable set that we needed.
\end{proof}

The next two propositions (Propositions~\ref{weight-cut-part} and~\ref{differ-at-C-only}) are easy consequences of the appropriate definitions, and we leave their proofs as exercises for the reader.
\begin{proposition} \label{weight-cut-part} Let $(G,w)$ be a weighted trigraph, let $(A,B,C)$ be a cut-partition of $G$, and let $S \subseteq V(G)$. Then $\llbracket S \cap (A \cup C) \rrbracket_{(G[A \cup C],w)}+\llbracket S \cap (B \cup C) \rrbracket_{(G[B \cup C],w)} = \llbracket S \rrbracket_{(G,w)}+\llbracket S \cap C \rrbracket_{(G[C],w)}$.
\end{proposition}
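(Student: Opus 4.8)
\section*{Proof proposal}

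The plan is to expand all four bracketed weights directly from the definition of $\llbracket\,\cdot\,\rrbracket_{(\cdot,w)}$ and to check that the three families of summands occurring in that definition --- the vertex weights $w(u)$, the directed pair weights $w(u,v)$, and the undirected pair weights $w(uv)$ --- agree on the two sides. To keep the bookkeeping straight I would write $V(G)=A\cup B\cup C$ as a disjoint union, put $T=V(G)\smallsetminus S$, and record two facts about overlaps: the sets $S\cap(A\cup C)$ and $S\cap(B\cup C)$ cover $S$ with intersection exactly $S\cap C$, and the sets $(A\cup C)\smallsetminus S$ and $(B\cup C)\smallsetminus S$ cover $T$ with intersection exactly $C\smallsetminus S$. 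Each bracketed quantity on the left-hand side is computed inside an induced subtrigraph, and since the induced weight of a set uses only indices lying inside that subtrigraph, while $A\cup C$ omits $B$ and $B\cup C$ omits $A$, no summand of the left-hand side ever pairs a vertex of $A$ with a vertex of $B$.

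The one structural input I would use is that $A$ is strongly anti-complete to $B$: for every $u\in A$ and $v\in B$ the pair $uv$ is strongly anti-adjacent, hence not semi-adjacent, so by the definition of a weight function $w(u,v)=w(v,u)=w(uv)=0$. This reconciles the two sides on the $A$--$B$ cross terms, which are simply absent on the left (by the previous remark) and present but zero on the right inside $\llbracket S\rrbracket_{(G,w)}$.

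With these two observations the verification is pure accounting. For the vertex weights, summing $w(u)$ over $S\cap(A\cup C)$ and over $S\cap(B\cup C)$ yields $\sum_{u\in S}w(u)$ together with one extra copy of $\sum_{u\in S\cap C}w(u)$, and this surplus is exactly the vertex contribution of $\llbracket S\cap C\rrbracket_{(G[C],w)}$ on the right. For the directed weights I would classify each summand of $\sum_{u\in S}\sum_{v\in T}w(u,v)$ by the ordered pair of parts (among $A,B,C$) containing $u$ and $v$, giving nine blocks; the two blocks indexed by $A\times B$ and $B\times A$ vanish, and of the remaining seven exactly the block internal to $C$, indexed by $(S\cap C)\times(C\smallsetminus S)$, is the one double-counted when one reassembles the directed parts of the two left-hand induced weights --- and that single extra copy is furnished on the right by the directed part of $\llbracket S\cap C\rrbracket_{(G[C],w)}$. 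The undirected weights are handled identically, partitioning ${T \choose 2}$ by parts: the $A$--$B$ pairs drop out, the pairs internal to $C\smallsetminus S$ are counted twice by the two induced subtrigraphs, and the extra copy is again supplied by $\llbracket S\cap C\rrbracket_{(G[C],w)}$. Summing the three verified identities gives the claimed equality. I do not expect a genuine obstacle here; the only point demanding care is the asymmetry of $w(u,v)$, so I would keep the ordered blocks $(X,Y)$ and $(Y,X)$ distinct throughout and never merge directed into undirected terms.
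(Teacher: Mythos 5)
Your proof is correct: the paper leaves this proposition as an exercise (``easy consequences of the appropriate definitions''), and your direct term-by-term expansion, using that $A$ strongly anti-complete to $B$ forces $w(u,v)=w(v,u)=w(uv)=0$ on all $A$--$B$ pairs while the $C$-internal terms are double-counted on the left and compensated by $\llbracket S\cap C\rrbracket_{(G[C],w)}$ on the right, is exactly the intended argument. Your care in keeping the ordered blocks distinct for the $w(u,v)$ terms is appropriate and the accounting checks out.
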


\begin{proposition} \label{differ-at-C-only} Let $(G,w)$ and $(G',w')$ be weighted trigraphs such that $V(G) = V(G')$. Let $C \subseteq V(G)$, and assume that $\theta_G \upharpoonright ({V(G) \choose 2} \smallsetminus {C \choose 2}) = \theta_{G'} \upharpoonright ({V(G) \choose 2} \smallsetminus {C \choose 2})$ and $w \upharpoonright (D(G) \smallsetminus D(G[C])) = w' \upharpoonright (D(G') \smallsetminus D(G'[C]))$ (that is, adjacency and weights in $(G,w)$ and $(G',w')$ are the same except possibly within $C$). Let $S \subseteq V(G)$. Then $\llbracket S \rrbracket_{(G,w)}-\llbracket S \cap C \rrbracket_{(G[C],w)} = \llbracket S \rrbracket_{(G',w')}-\llbracket S \cap C \rrbracket_{(G'[C],w')}$.
\end{proposition}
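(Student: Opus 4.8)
The plan is to prove the identity by a direct expansion of both sides using the definition of $\llbracket \cdot \rrbracket$. I would first record a preliminary observation about domains: since $V(G)=V(G')$ the sets $D(G)$ and $D(G')$ coincide, and since $G[C]$ and $G'[C]$ have the same vertex set $C$ we have $D(G[C])=D(G'[C])$; hence $D(G)\smallsetminus D(G[C]) = D(G')\smallsetminus D(G'[C])$, so the weight hypothesis $w\upharpoonright(D(G)\smallsetminus D(G[C])) = w'\upharpoonright(D(G')\smallsetminus D(G'[C]))$ is a genuine equality of two functions on one common domain. I would then set $f(S) = \llbracket S \rrbracket_{(G,w)} - \llbracket S\cap C\rrbracket_{(G[C],w)}$ and define $f'(S)$ analogously for $(G',w')$; the goal is exactly to show $f(S)=f'(S)$.

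The main computation is to expand $f(S)$ and verify that every surviving term is a value of $w$ at an argument lying in $D(G)\smallsetminus D(G[C])$. Writing out $\llbracket S \rrbracket_{(G,w)}$ and $\llbracket S\cap C\rrbracket_{(G[C],w)}$ and subtracting, I would split each of the three sums according to membership in $C$. The vertex term gives $\sum_{u\in S} w(u) - \sum_{u\in S\cap C} w(u) = \sum_{u\in S\smallsetminus C} w(u)$, and every such $u$ lies in $V(G)\smallsetminus C$. For the ordered-pair term, I would split $\sum_{u\in S}\sum_{v\in V(G)\smallsetminus S} w(u,v)$ into four pieces according to whether $u\in C$ and whether $v\in C$; the unique piece with $u\in S\cap C$ and $v\in C\smallsetminus S$ is precisely the quantity subtracted off in $\llbracket S\cap C\rrbracket_{(G[C],w)}$, so the three surviving pieces are exactly the ordered pairs $(u,v)$ with $u\in S$, $v\in V(G)\smallsetminus S$ and $\{u,v\}\not\subseteq C$. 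The unordered-pair term behaves identically: subtracting $\sum_{uv\in{C\smallsetminus S \choose 2}} w(uv)$ from $\sum_{uv\in{V(G)\smallsetminus S \choose 2}} w(uv)$ leaves exactly the pairs $uv\in{V(G)\smallsetminus S \choose 2}$ with $\{u,v\}\not\subseteq C$. In all three cases, an argument survives only when it is not ``entirely inside $C$,'' which is precisely the condition for membership in $D(G)\smallsetminus D(G[C])$.

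Collecting these, $f(S)$ becomes a sum whose index sets depend only on $V(G)$, $S$, and $C$, with each summand a value $w(\cdot)$ at an argument in $D(G)\smallsetminus D(G[C])$. Running the identical expansion for $(G',w')$ yields $f'(S)$ as a sum over the same index sets (as $V(G')=V(G)$) with summands $w'(\cdot)$ at the same arguments, so the hypothesis $w=w'$ on $D(G)\smallsetminus D(G[C])$ gives $f(S)=f'(S)$ term by term, which is the claim. I expect the only real obstacle to be the bookkeeping: keeping the four-way (respectively three-way) case split for the ordered (respectively unordered) pair sums straight, and checking that the cancelled terms are exactly those with both endpoints in $C$, so that every surviving argument lies in the shared domain. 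I would also note that the adjacency hypothesis $\theta_G\upharpoonright(\cdots)=\theta_{G'}\upharpoonright(\cdots)$ is not actually invoked in the computation, since $\llbracket\cdot\rrbracket$ is defined purely from $w$ and the relevant vertex sets; it is the natural companion assumption making precise the statement that $(G,w)$ and $(G',w')$ agree outside $C$.
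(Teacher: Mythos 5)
Your proof is correct and is exactly the routine expansion the paper has in mind (the paper leaves this proposition as an exercise, calling it an easy consequence of the definitions): the three-way split of each sum shows that $\llbracket S \rrbracket - \llbracket S\cap C\rrbracket$ involves only $w$-values at arguments outside $D(G[C])$, where $w$ and $w'$ agree. Your side remark that the adjacency hypothesis is not actually used in the computation is also accurate.
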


\begin{sloppypar}
We now need a couple of definitions. If $(G,w)$ is a weighted trigraph and $R \subseteq V(G)$, the {\em reduction} of $(G,w)$ to $R$, denoted by ${\rm Red}[G,w;R]$, is defined to be the weighted trigraph $(G[R],w')$, where $w':D(G[R]) \rightarrow \mathbb{N}$ is given by:
\begin{itemize}
\item for all $u \in R$, $w'(u) = \max\left\{w(u)-\sum\limits_{v \in V(G) \smallsetminus R} (w(uv)-w(u,v)),0\right\}$;
\item for all distinct $u,v \in R$, $w'(u,v) = w(u,v)$;
\item for all $uv \in {R \choose 2}$, $w'(uv) = w(uv)$.
\end{itemize}
Further, we define the {\em exterior weight} of $R$ with respect to $(G,w)$, denoted by ${\rm Ext}[G,w;R]$, to be
\begin{displaymath}
\begin{array}{rcl}
{\rm Ext}[G,w;R] & = & \Big(\sum\limits_{uv \in {V(G) \smallsetminus R \choose 2}} w(uv)\Big)+\Big(\sum\limits_{u \in R}\sum\limits_{v \in V(G) \smallsetminus R} w(uv)\Big).
\end{array}
\end{displaymath}
We remark that for all weighted trigraphs $(G,w)$, we have that ${\rm Red}[G,w;V(G)] = (G,w)$ and ${\rm Ext}[G,w;V(G)] = 0$, and consequently, $\alpha(G,w) = \alpha({\rm Red}[G,w;V(G)])+{\rm Ext}[G,w;V(G)]$.
\end{sloppypar}

\vbox{\begin{proposition} \label{prop-ext-alg} There is an algorithm with the following specifications:
\begin{itemize}
\item Input: a weighted trigraph $(G,w)$ and a set $R \subseteq V(G)$;
\item Output: ${\rm Red}[G,w;R]$ and ${\rm Ext}[G,w;R]$;
\item Running time: $O(n^2)$, where $n = |V(G)|$.
\end{itemize}
\end{proposition}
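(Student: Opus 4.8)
\noindent
The plan is to exhibit a direct algorithm that reads the stored weights of $(G,w)$ and evaluates, term by term, the two expressions defining ${\rm Red}[G,w;R]$ and ${\rm Ext}[G,w;R]$, and then to bound its running time by a routine count of arithmetic operations. Throughout I assume, as is implicit in the statement, that $w$ is given as a table indexed by $D(G)$, so that each value $w(u)$, $w(u,v)$, or $w(uv)$ can be retrieved in $O(1)$ time, and that each arithmetic operation on weights costs $O(1)$. Since $|D(G)| = O(n^2)$, the input itself has size $\Theta(n^2)$, so an $O(n^2)$ bound is essentially optimal.

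First I would build the reduction ${\rm Red}[G,w;R] = (G[R],w')$. The underlying trigraph $G[R]$ is obtained by restricting $\theta_G$ to ${R \choose 2}$, that is, by copying at most ${n \choose 2}$ adjacency values, which costs $O(n^2)$. The pair-weights of $w'$ are copied verbatim from $w$: one sets $w'(u,v) = w(u,v)$ for all distinct $u,v \in R$ and $w'(uv) = w(uv)$ for all $uv \in {R \choose 2}$, again $O(n^2)$ assignments. The only slightly delicate part is the vertex-weights: for each $u \in R$ I would make a single pass over $V(G) \smallsetminus R$, accumulating $\sum\limits_{v \in V(G) \smallsetminus R}(w(uv)-w(u,v))$, then subtract this quantity from $w(u)$ and take the maximum with $0$ to obtain $w'(u)$. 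For a fixed $u$ this costs $O(|V(G) \smallsetminus R|) = O(n)$, and summing over the at most $n$ vertices of $R$ yields $O(n^2)$ overall.

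Finally I would compute the scalar ${\rm Ext}[G,w;R]$ as the sum of its two constituent double sums: $\sum\limits_{uv \in {V(G) \smallsetminus R \choose 2}} w(uv)$ ranges over at most ${n \choose 2}$ pairs, and $\sum\limits_{u \in R}\sum\limits_{v \in V(G) \smallsetminus R} w(uv)$ ranges over at most $n^2$ pairs; each term is an $O(1)$ lookup and addition, so both sums are evaluated in $O(n^2)$ time. Correctness is immediate, since the algorithm evaluates precisely the expressions appearing in the definitions of ${\rm Red}[G,w;R]$ and ${\rm Ext}[G,w;R]$; no structural property of weighted trigraphs is needed beyond the fact that the relevant weights are stored and accessible in constant time. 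Summing the costs of the four phases gives a total running time of $O(n^2)$, as required. The main (and essentially only) point requiring care is the quadratic accounting for the vertex-weights, where one must observe that the nested iteration over $R$ and $V(G) \smallsetminus R$ contributes $|R|\cdot|V(G)\smallsetminus R| \le n^2$ operations rather than something larger.
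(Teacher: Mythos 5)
Your proof is correct and follows essentially the same route as the paper's: restrict $\theta_G$ and the pair-weights to $R$ in $O(n^2)$ time, compute each reduced vertex-weight by an $O(n)$ pass over $V(G)\smallsetminus R$ (hence $O(n^2)$ over all of $R$), and evaluate the two sums defining ${\rm Ext}[G,w;R]$ directly in $O(n^2)$. No discrepancies to report.
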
}

\begin{proof}
Clearly, the trigraph $G[R]$ can be computed in time $O(n^2)$. Similarly, the quantity $\sum\limits_{uv \in {V(G) \smallsetminus R \choose 2}} w(uv)$ can be found in time $O(n^2)$. Further, for each vertex $u \in R$, the quantities $\sum\limits_{v \in V(G) \smallsetminus R} w(uv)$ and $\max\{w(u)-\sum\limits_{v \in V(G) \smallsetminus R} (w(uv)-w(u,v)),0\}$ can be found in $O(n)$ time. Since $R$ contains at most $n$ vertices, the result follows.
\end{proof}

\begin{proposition} \label{prop-ext} Let $(G,w)$ be a weighted trigraph, and let $S \subseteq R \subseteq V(G)$. Then $\llbracket S \rrbracket_{(G,w)} \leq \llbracket S \rrbracket_{{\rm Red}[G,w;R]}+{\rm Ext}[G,w;R]$. Furthermore, if $S$ contains no zero-vertices of ${\rm Red}[G,w;R]$, then equality holds, that is, $\llbracket S \rrbracket_{(G,w)} = \llbracket S \rrbracket_{{\rm Red}[G,w;R]}+{\rm Ext}[G,w;R]$.
\end{proposition}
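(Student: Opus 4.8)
The plan is to expand both $\llbracket S \rrbracket_{(G,w)}$ and $\llbracket S \rrbracket_{{\rm Red}[G,w;R]}+{\rm Ext}[G,w;R]$ straight from the definitions and then compare them term by term. Write $V = V(G)$, and observe that since $S \subseteq R \subseteq V$, the set $V \smallsetminus S$ is the disjoint union of $R \smallsetminus S$ and $V \smallsetminus R$. I would use this to split each of the three sums defining $\llbracket S \rrbracket_{(G,w)}$ according to whether the vertices involved lie in $S$, in $R \smallsetminus S$, or in $V \smallsetminus R$; in particular the pair-sum $\sum_{uv \in {V \smallsetminus S \choose 2}} w(uv)$ breaks into the three pieces indexed by ${R \smallsetminus S \choose 2}$, by ${V \smallsetminus R \choose 2}$, and by the pairs with one endpoint in $R \smallsetminus S$ and one in $V \smallsetminus R$. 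On the other side, I would substitute the definition of the reduced weight function $w'$ into $\llbracket S \rrbracket_{{\rm Red}[G,w;R]} = \llbracket S \rrbracket_{(G[R],w')}$ (recalling that $S \subseteq R$, so these sums range only over $R$), and split the term $\sum_{u \in R}\sum_{v \in V \smallsetminus R} w(uv)$ of ${\rm Ext}[G,w;R]$ according to whether $u \in S$ or $u \in R \smallsetminus S$.

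The key observation is that almost everything cancels. After the expansions above, the terms $\sum_{u \in S}\sum_{v \in R \smallsetminus S} w(u,v)$, $\sum_{uv \in {R \smallsetminus S \choose 2}} w(uv)$, $\sum_{uv \in {V \smallsetminus R \choose 2}} w(uv)$, and $\sum_{u \in R \smallsetminus S}\sum_{v \in V \smallsetminus R} w(uv)$ appear identically on both sides and cancel. The only surviving discrepancy is concentrated on the vertices of $S$ and their interaction with $V \smallsetminus R$, giving
\begin{displaymath}
\llbracket S \rrbracket_{(G,w)} - \big(\llbracket S \rrbracket_{{\rm Red}[G,w;R]} + {\rm Ext}[G,w;R]\big) = \sum_{u \in S}\left( w(u) + \sum_{v \in V \smallsetminus R} w(u,v) - \sum_{v \in V \smallsetminus R} w(uv) - w'(u)\right).
\end{displaymath}

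To finish, for each $u \in S$ set $c_u = \sum_{v \in V \smallsetminus R}\big(w(uv) - w(u,v)\big)$, which is non-negative because $w(uv) \geq w(u,v)$ for all pairs by the definition of a weight function. Then the parenthesized quantity for $u$ equals $(w(u) - c_u) - w'(u)$, and since $w'(u) = \max\{w(u) - c_u, 0\}$ by definition, the elementary identity $x - \max\{x,0\} = \min\{x,0\}$ shows that this equals $\min\{w(u) - c_u, 0\} \leq 0$. Summing over $u \in S$ yields $\llbracket S \rrbracket_{(G,w)} \leq \llbracket S \rrbracket_{{\rm Red}[G,w;R]} + {\rm Ext}[G,w;R]$, as required. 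For the equality statement, note that a vertex $u \in S$ is a zero-vertex of ${\rm Red}[G,w;R]$ exactly when $w'(u) = 0$, i.e. when $w(u) - c_u \leq 0$; so if $S$ contains no such vertex, then $w(u) - c_u > 0$ for every $u \in S$, each $\min$ term vanishes, and equality holds.

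I expect the only real difficulty to be the bookkeeping in the first two paragraphs: one must partition every sum consistently and check that exactly the right terms cancel, which is fiddly precisely because (as the authors warn) the weight of a set depends on what lies outside it. There is no conceptual obstacle — the whole proposition reduces to the per-vertex identity $x - \max\{x,0\} = \min\{x,0\}$ — but the expansions have to be carried out carefully to avoid a sign error or a misplaced term.
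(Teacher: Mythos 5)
Your proposal is correct and follows essentially the same route as the paper: both arguments expand $\llbracket S \rrbracket_{{\rm Red}[G,w;R]}+{\rm Ext}[G,w;R]$, cancel the common terms, and reduce everything to the per-vertex inequality $w'(u) \geq w(u)-\sum_{v \in V(G) \smallsetminus R}(w(uv)-w(u,v))$, with equality precisely when $u$ is not a zero-vertex of the reduction. Your reformulation via $x-\max\{x,0\}=\min\{x,0\}$ is just a slightly more explicit packaging of the same cancellation the paper carries out as a chain of (in)equalities.
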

\begin{proof}
Set $w':D(G[R]) \rightarrow \mathbb{N}$ so that $(G[R],w') = {\rm Red}[G,w;R]$. By definition, for all $u \in S$, $w'(u) \geq w(u)-\sum\limits_{v \in V(G) \smallsetminus R} (w(uv)-w(u,v))$ (and if $u$ is not a zero-vertex of ${\rm Red}[G,w;R]$, then equality holds). Consequently,
\begin{displaymath}
\begin{array}{ll}
& \llbracket S \rrbracket_{{\rm Red}[G,w;R]}+{\rm Ext}[G,w;R]
\\
\\
= & \Big(\sum\limits_{u \in S} w'(u)\Big)+\Big(\sum\limits_{u \in S} \sum\limits_{v \in R \smallsetminus S} w'(u,v)\Big)+\Big(\sum\limits_{uv \in {R \smallsetminus S \choose 2}} w'(uv)\Big)+
\\
& +\Big(\sum\limits_{uv \in {V(G) \smallsetminus R \choose 2}} w(uv)\Big)+\Big(\sum\limits_{u \in R}\sum\limits_{v \in V(G) \smallsetminus R} w(uv)\Big)
\\
\\
\geq & \Bigg(\sum\limits_{u \in S} \Big(w(u)-\sum\limits_{v \in V(G) \smallsetminus R} (w(uv)-w(u,v))\Big)\Bigg)+\Big(\sum\limits_{u \in S} \sum\limits_{v \in R \smallsetminus S} w(u,v)\Big)+
\\
& +\Big(\sum\limits_{uv \in {R \smallsetminus S \choose 2}} w(uv)\Big)+\Big(\sum\limits_{uv \in {V(G) \smallsetminus R \choose 2}} w(uv)\Big)+\Big(\sum\limits_{u \in R}\sum\limits_{v \in V(G) \smallsetminus R} w(uv)\Big)
\\
\\
= & \Big(\sum\limits_{u \in S} w(u)\Big)-\Big(\sum\limits_{u \in S}\sum\limits_{v \in V(G) \smallsetminus R} w(uv)\Big)+\Big(\sum\limits_{u \in S} \sum\limits_{v \in V(G) \smallsetminus S} w(u,v)\Big)+
\\
& +\Big(\sum\limits_{uv \in {R \smallsetminus S \choose 2}} w(uv)\Big)+\Big(\sum\limits_{uv \in {V(G) \smallsetminus R \choose 2}} w(uv)\Big)+\Big(\sum\limits_{u \in R}\sum\limits_{v \in V(G) \smallsetminus R} w(uv)\Big)
\\
\\
= & \Big(\sum\limits_{u \in S} w(u)\Big)+\Big(\sum\limits_{u \in S} \sum\limits_{v \in V(G) \smallsetminus S} w(u,v)\Big)+\Big(\sum\limits_{uv \in {R \smallsetminus S \choose 2}} w(uv)\Big)+
\\
& +\Big(\sum\limits_{uv \in {V(G) \smallsetminus R \choose 2}} w(uv)\Big)+\Big(\sum\limits_{u \in R \smallsetminus S}\sum\limits_{v \in V(G) \smallsetminus R} w(uv)\Big)
\\
\\
= & \Big(\sum\limits_{u \in S} w(u)\Big)+\Big(\sum\limits_{u \in S} \sum\limits_{v \in V(G) \smallsetminus S} w(u,v)\Big)+\Big(\sum\limits_{uv \in {V(G) \smallsetminus S \choose 2}} w(uv)\Big)
\\
\\
= & \llbracket S \rrbracket_{(G,w)}.
\end{array}
\end{displaymath}
This proves that $\llbracket S \rrbracket_{(G,w)} \leq \llbracket S \rrbracket_{{\rm Red}[G,w;R]}+{\rm Ext}[G,w;R]$. Furthermore, if $S$ contains no zero vertices of ${\rm Red}[G,w;R]$ (and so $w'(u) = w(u)-\sum\limits_{v \in V(G) \smallsetminus R} (w(uv)-w(u,v))$ for all $u \in S$), the computation above yields $\llbracket S \rrbracket_{(G,w)} = \llbracket S \rrbracket_{{\rm Red}[G,w;R]}+{\rm Ext}[G,w;R]$.
\end{proof}

\begin{proposition} \label{prop-S-red-ext} Let $(G,w)$ be a weighted trigraph, let $S \subseteq R \subseteq V(G)$, and assume that $S$ is a stable set of $G$. Then $\sum\limits_{u \in S} w(u) \leq \alpha({\rm Red}[G,w;R])+{\rm Ext}[G,w;R]$.
\end{proposition}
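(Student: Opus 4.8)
The plan is to deduce this from Proposition~\ref{prop-ext} by chaining three inequalities, bounding $\sum_{u \in S} w(u)$ from above first by $\llbracket S \rrbracket_{(G,w)}$, then by $\llbracket S \rrbracket_{{\rm Red}[G,w;R]}+{\rm Ext}[G,w;R]$, and finally by $\alpha({\rm Red}[G,w;R])+{\rm Ext}[G,w;R]$. The point is that none of these three steps requires any delicate computation: the first is pure nonnegativity, the second is a verbatim application of an already-proved proposition, and the third is the definition of the stability number.

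First I would observe that, by the definition of $\llbracket S \rrbracket_{(G,w)}$, we have $\llbracket S \rrbracket_{(G,w)} = \big(\sum_{u \in S} w(u)\big)+\big(\sum_{u \in S}\sum_{v \in V(G)\smallsetminus S} w(u,v)\big)+\big(\sum_{uv \in \binom{V(G)\smallsetminus S}{2}} w(uv)\big)$. Since $w$ takes values in $\mathbb{N}$, the last two summands are nonnegative, and hence $\sum_{u \in S} w(u) \leq \llbracket S \rrbracket_{(G,w)}$. Next, because $S \subseteq R$, the hypothesis of Proposition~\ref{prop-ext} is met, and that proposition gives $\llbracket S \rrbracket_{(G,w)} \leq \llbracket S \rrbracket_{{\rm Red}[G,w;R]}+{\rm Ext}[G,w;R]$ (here I would use only the inequality, not the equality case, so no assumption about zero-vertices is needed).

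Finally I would note that ${\rm Red}[G,w;R]$ is, by definition, a weighted trigraph whose underlying trigraph is the induced subtrigraph $G[R]$. Since $S$ is a stable set of $G$ and $S \subseteq R$, deleting $V(G)\smallsetminus R$ creates no new adjacencies, so $S$ remains a stable set of $G[R]$, i.e.\ a stable set of ${\rm Red}[G,w;R]$. By the definition of $\alpha$ as the maximum of $\llbracket\,\cdot\,\rrbracket$ over stable sets, this yields $\llbracket S \rrbracket_{{\rm Red}[G,w;R]} \leq \alpha({\rm Red}[G,w;R])$. Combining the three inequalities gives $\sum_{u \in S} w(u) \leq \alpha({\rm Red}[G,w;R])+{\rm Ext}[G,w;R]$, as required.

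There is essentially no serious obstacle here; the only thing to be careful about, in view of the earlier warning that the set-weight function is not monotone, is to avoid any claim that reduction or restriction preserves the weight of $S$ exactly. I would therefore deliberately route through the one-sided bounds only: the nonnegativity inequality $\sum_{u\in S} w(u) \le \llbracket S \rrbracket_{(G,w)}$ and the inequality direction of Proposition~\ref{prop-ext}, both of which are robust regardless of how the pair-weights behave.
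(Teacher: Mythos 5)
Your proposal is correct and follows exactly the same route as the paper's own proof: bound $\sum_{u \in S} w(u)$ by $\llbracket S \rrbracket_{(G,w)}$ via nonnegativity, apply the inequality of Proposition~\ref{prop-ext}, and then pass to $\alpha({\rm Red}[G,w;R])$ since $S$ is a stable set of $G[R]$. Your extra remark about deliberately avoiding the equality case of Proposition~\ref{prop-ext} is a sensible precaution but does not change the argument.
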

\begin{proof}
By the definition of $\llbracket S \rrbracket_{(G,w)}$, we have that $\sum\limits_{u \in S} w(u) \leq \llbracket S \rrbracket_{(G,w)}$. We now compute:
\begin{displaymath}
\begin{array}{rcllll}
\sum\limits_{u \in S} w(u) & \leq & \llbracket S \rrbracket_{(G,w)}
\\
\\
& \leq & \llbracket S \rrbracket_{{\rm Red}[G,w;R]}+{\rm Ext}[G,w;R] & & & \text{by Proposition~\ref{prop-ext}}
\\
\\
& \leq & \alpha({\rm Red}[G,w;R])+{\rm Ext}[G,w;R].
\end{array}
\end{displaymath}
Thus, $\sum\limits_{u \in S} w(u) \leq \alpha({\rm Red}[G,w;R])+{\rm Ext}[G,w;R]$. This completes the argument.
\end{proof}

\begin{proposition} \label{prop-alpha-nested} Let $(G,w)$ be a weighted trigraph, and let $R_1,R_2 \subseteq V(G)$ be disjoint sets. Set $\alpha_{R_1} = \alpha({\rm Red}[G,w;R_1])+{\rm Ext}[G,w;R_1]$ and $\alpha_{R_1 \cup R_2} = \alpha({\rm Red}[G,w;R_1 \cup R_2])+{\rm Ext}[G,w;R_1 \cup R_2]$. Then $\alpha_{R_1} \leq \alpha_{R_1 \cup R_2} \leq \alpha_{R_1}+\sum\limits_{u \in R_2} w(u)$.
\end{proposition}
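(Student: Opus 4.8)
The plan is to first prove a single clean characterization of the quantity $\alpha_R := \alpha({\rm Red}[G,w;R])+{\rm Ext}[G,w;R]$ for an arbitrary $R \subseteq V(G)$, namely
\[
\alpha_R = \max\{\llbracket S \rrbracket_{(G,w)} \mid \text{$S$ is a stable set of $G$ with $S \subseteq R$}\},
\]
and then read off both inequalities of the proposition as immediate consequences. Establishing this characterization is where the real work lies, and it is exactly the point at which one must be careful, since $\llbracket \cdot \rrbracket_{(G,w)}$ is not monotone and the weight of a stable set depends on what lies \emph{outside} of it; the role of ${\rm Red}$ and ${\rm Ext}$ is precisely to bookkeep this outside contribution, and the characterization is the bridge that lets us reason entirely in terms of the original weight $\llbracket \cdot \rrbracket_{(G,w)}$.

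To prove ``$\geq$'' in the characterization, I would take any stable set $S \subseteq R$ of $G$; since $S$ is then a stable set of $G[R]$, Proposition~\ref{prop-ext} gives $\llbracket S \rrbracket_{(G,w)} \leq \llbracket S \rrbracket_{{\rm Red}[G,w;R]}+{\rm Ext}[G,w;R] \leq \alpha({\rm Red}[G,w;R])+{\rm Ext}[G,w;R] = \alpha_R$. For ``$\leq$'', I would apply Proposition~\ref{prop-delete-zero} to the weighted trigraph ${\rm Red}[G,w;R]$ to obtain a stable set $S^*$ of $G[R]$ containing no zero-vertices of ${\rm Red}[G,w;R]$ and satisfying $\llbracket S^* \rrbracket_{{\rm Red}[G,w;R]} = \alpha({\rm Red}[G,w;R])$. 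Because $S^*$ has no zero-vertices of the reduction, the equality case of Proposition~\ref{prop-ext} applies and yields $\llbracket S^* \rrbracket_{(G,w)} = \llbracket S^* \rrbracket_{{\rm Red}[G,w;R]}+{\rm Ext}[G,w;R] = \alpha_R$; as $S^*$ is a stable set of $G$ with $S^* \subseteq R$, this shows that $\alpha_R$ is attained by some admissible stable set, completing the characterization.

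With the characterization in hand, the lower bound $\alpha_{R_1} \leq \alpha_{R_1 \cup R_2}$ is immediate: every stable set $S \subseteq R_1$ is also a stable set with $S \subseteq R_1 \cup R_2$, and $\llbracket S \rrbracket_{(G,w)}$ does not depend on the choice of $R$, so the maximum defining $\alpha_{R_1 \cup R_2}$ is taken over a superset of the stable sets defining $\alpha_{R_1}$. For the upper bound, I would choose, via the characterization, a stable set $S \subseteq R_1 \cup R_2$ with $\llbracket S \rrbracket_{(G,w)} = \alpha_{R_1 \cup R_2}$, and apply Proposition~\ref{prop-delete-vertices-weight} with $Z = R_2$ to get $\llbracket S \rrbracket_{(G,w)} \leq \llbracket S \smallsetminus R_2 \rrbracket_{(G,w)}+\sum_{u \in R_2} w(u)$. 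Since $R_1$ and $R_2$ are disjoint, $S \smallsetminus R_2$ is a stable set of $G$ contained in $R_1$, so $\llbracket S \smallsetminus R_2 \rrbracket_{(G,w)} \leq \alpha_{R_1}$ by the characterization, and combining the two estimates gives $\alpha_{R_1 \cup R_2} \leq \alpha_{R_1}+\sum_{u \in R_2} w(u)$, as required.

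The only delicate point throughout is the invocation of the equality case of Proposition~\ref{prop-ext}, which is precisely why Proposition~\ref{prop-delete-zero} is needed: it lets us arrange a maximizer with no zero-vertices, so that the inequality relating $\llbracket \cdot \rrbracket_{(G,w)}$ to the reduction-plus-exterior quantity becomes an equality. Everything else is a routine chaining of the already-proven inequalities.
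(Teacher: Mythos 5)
Your proof is correct and follows essentially the same route as the paper's: both arguments rest on the same chain of applications of Proposition~\ref{prop-delete-zero} (to obtain a maximizer with no zero-vertices of the reduction), the inequality and equality cases of Proposition~\ref{prop-ext}, and Proposition~\ref{prop-delete-vertices-weight} with $Z=R_2$. Your intermediate characterization $\alpha_R = \max\{\llbracket S \rrbracket_{(G,w)} \mid S \subseteq R \text{ stable}\}$ is just a clean repackaging of the inequalities the paper chains together inline.
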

\begin{proof}
We first show that $\alpha_{R_1} \leq \alpha_{R_1 \cup R_2}$. Using Proposition~\ref{prop-delete-zero}, we fix a stable set $S \subseteq R_1$ of $G$ that contains no zero-vertices of ${\rm Red}[G,w;R_1]$ and satisfies $\llbracket S \rrbracket_{{\rm Red}[G,w;R_1]} = \alpha({\rm Red}[G,w;R_1])$. Then
\begin{displaymath}
\begin{array}{rcllll}
\alpha_{R_1} & = & \alpha({\rm Red}[G,w;R_1])+{\rm Ext}[G,w;R_1] & & &
\\
\\
& = & \llbracket S \rrbracket_{{\rm Red}[G,w;R_1]}+{\rm Ext}[G,w;R_1] & & &
\\
\\
& = & \llbracket S \rrbracket_{(G,w)} & & & \text{by Proposition~\ref{prop-ext}}
\\
\\
& \leq & \llbracket S \rrbracket_{{\rm Red}[G,w;R_1 \cup R_2]}+{\rm Ext}[G,w;R_1 \cup R_2] & & & \text{by Proposition~\ref{prop-ext}}
\\
\\
& \leq & \alpha({\rm Red}[G,w;R_1 \cup R_2])+
\\
& & +{\rm Ext}[G,w;R_1 \cup R_2]
\\
\\
& = & \alpha_{R_1 \cup R_2}.
\end{array}
\end{displaymath}
Thus, $\alpha_{R_1} \leq \alpha_{R_1 \cup R_2}$.

It remains to show that $\alpha_{R_1 \cup R_2} \leq \alpha_{R_1}+\sum\limits_{u \in R_2} w(u)$. Using Proposition~\ref{prop-delete-zero}, we fix a stable set $S \subseteq R_1 \cup R_2$ that contains no zero-vertices of ${\rm Red}[G,w;R_1 \cup R_2]$ and satisfies $\llbracket S \rrbracket_{{\rm Red}[G,w;R_1 \cup R_2]} = \alpha({\rm Red}[G,w;R_1 \cup R_2])$. We then have the following:
\begin{displaymath}
\begin{array}{rcllll}
\alpha_{R_1 \cup R_2} & = & \alpha({\rm Red}[G,w;R_1 \cup R_2])+
\\
& & +{\rm Ext}[G,w;R_1 \cup R_2]
\\
\\
& = & \llbracket S \rrbracket_{{\rm Red}[G,w;R_1 \cup R_2]}+
\\
& & +{\rm Ext}[G,w;R_1 \cup R_2]
\\
\\
& = & \llbracket S \rrbracket_{(G,w)} & & & \text{by Proposition~\ref{prop-ext}}
\\
\\
& \leq & \llbracket S \smallsetminus R_2 \rrbracket_{(G,w)}+\sum\limits_{u \in R_2} w(u) & & & \text{by Proposition~\ref{prop-delete-vertices-weight}}
\\
\\
& \leq & \llbracket S \smallsetminus R_2 \rrbracket_{{\rm Red}[G,w;R_1]}+ & & & \text{by Proposition~\ref{prop-ext}}
\\
& & +{\rm Ext}[G,w;R_1]+\sum\limits_{u \in R_2} w(u)
\\
\\
& \leq & \alpha({\rm Red}[G,w;R_1])+
\\
& & +{\rm Ext}[G,w;R_1]+\sum\limits_{u \in R_2} w(u)
\\
\\
& = & \alpha_{R_1}+\sum\limits_{u \in R_2} w(u).
\end{array}
\end{displaymath}
Thus, $\alpha_{R_1 \cup R_2} \leq \alpha_{R_1}+\sum\limits_{u \in R_2} w(u)$. This completes the argument.
\end{proof}

Before stating our next proposition, we remind the reader that if $G$ is a semi-realization of a trigraph $G'$, then every weight function for $G$ is also a weight function for $G'$. In particular then, if $(G,w)$ is a weighted trigraph, and $G'$ is a trigraph obtained from $G$ by possibly turning some strongly anti-adjacent pairs of $G$ into semi-adjacent pairs, then $(G',w)$ is also a weighted trigraph.

\begin{proposition} \label{prop-cut-part-reduction} Let $(G,w)$ be a weighted trigraph, and let $(A,B,C)$ be a cut-partition of $G$. For each $X \in \{A,B\}$, let $G_X$ be a trigraph obtained from $G[X \cup C]$ by possibly turning some strongly anti-adjacent pairs of $G[X \cup C]$ into semi-adjacent pairs. For all $C' \subseteq C$, set $\alpha_{A \cup C'} = \alpha({\rm Red}[G_A,w;A \cup C'])+{\rm Ext}[G_A,w;A \cup C']$. Let $k \in \mathbb{N}$ and let $w_B$ be a weight function for $G_B$ that satisfies all the following:
\begin{itemize}
\item for all $u \in B$, $w_B(u) = w(u)$;
\item for all $uv \in {B \cup C \choose 2} \smallsetminus {C \choose 2}$, $w_B(u,v) = w(u,v)$ and $w_B(uv) = w(uv)$;
\item for all $S_C \subseteq C$ such that $S_C$ is a stable set of $G_B$, we have that $\llbracket S_C \rrbracket_{(G_B[C],w_B)} = \alpha_{A \cup S_C}-k$.
\end{itemize}
Then $\alpha(G,w) = k+\alpha(G_B,w_B)$.
\end{proposition}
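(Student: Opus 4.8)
The plan is to prove the two inequalities $\alpha(G,w)\le k+\alpha(G_B,w_B)$ and $\alpha(G,w)\ge k+\alpha(G_B,w_B)$ separately, after recording three preliminary facts. First, I would observe that for each $X\in\{A,B\}$, since $G_X$ is obtained from $G[X\cup C]$ only by turning strongly anti-adjacent pairs (which carry zero $w$-weight and remain anti-adjacent) into semi-adjacent pairs, the trigraphs $G_X$ and $G[X\cup C]$ have exactly the same stable sets, and $\llbracket S\rrbracket_{(G_X,w)}=\llbracket S\rrbracket_{(G[X\cup C],w)}$ for every $S$ (the quantity $\llbracket\cdot\rrbracket$ depends only on the weight function, and the weights of the altered pairs vanish); in particular, a subset of $C$ is stable in $G_A$ if and only if it is stable in $G_B$ if and only if it is stable in $G$. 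Second, combining the inequality and the equality clause of Proposition~\ref{prop-ext} with Proposition~\ref{prop-delete-zero}, I would record the identity $\alpha({\rm Red}[G',w;R])+{\rm Ext}[G',w;R]=\max\{\llbracket S\rrbracket_{(G',w)}\mid S\subseteq R,\ S\text{ stable in }G'\}$, valid for every weighted trigraph $(G',w)$ and every $R\subseteq V(G')$; applied to $G'=G_A$ and $R=A\cup S_C$ this reads $\alpha_{A\cup S_C}=\max\{\llbracket S'\rrbracket_{(G_A,w)}\mid S'\subseteq A\cup S_C,\ S'\text{ stable}\}$. Third, applying Proposition~\ref{differ-at-C-only} to $(G_B,w)$ and $(G_B,w_B)$ (which share the same trigraph and agree off $C$ by the first two defining properties of $w_B$), then using the third defining property of $w_B$ and the first preliminary fact, I would derive, for every stable set $T=S_B\cup S_C$ of $G_B$ with $S_C=T\cap C$, the reduction formula $\llbracket T\rrbracket_{(G_B,w_B)}=\llbracket T\rrbracket_{(G[B\cup C],w)}-\llbracket S_C\rrbracket_{(G[C],w)}+\alpha_{A\cup S_C}-k$.

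For the inequality $\alpha(G,w)\le k+\alpha(G_B,w_B)$, I would take a maximum weight stable set $S$ of $(G,w)$ and write $S_A=S\cap A$, $S_B=S\cap B$, $S_C=S\cap C$. Proposition~\ref{weight-cut-part} expresses $\llbracket S\rrbracket_{(G,w)}$ as $\llbracket S_A\cup S_C\rrbracket_{(G[A\cup C],w)}+\llbracket S_B\cup S_C\rrbracket_{(G[B\cup C],w)}-\llbracket S_C\rrbracket_{(G[C],w)}$. Bounding the first term by $\alpha_{A\cup S_C}$ (via the second preliminary fact, as $S_A\cup S_C\subseteq A\cup S_C$ is stable in $G_A$) and then recognising the resulting expression, through the reduction formula, as $\llbracket S_B\cup S_C\rrbracket_{(G_B,w_B)}+k\le\alpha(G_B,w_B)+k$, yields the bound. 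This direction is routine.

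The reverse inequality is the main obstacle, and the subtlety is that $\alpha_{A\cup S_C}$ is a maximum over stable sets \emph{contained in} $A\cup S_C$, so the optimal extension into $A$ may meet $C$ in a proper subset $S_C''\subsetneqq S_C$ that need not be compatible with the full $S_C$ used on the $B$-side. To handle this, I would take a maximum weight stable set $T=S_B\cup S_C$ of $(G_B,w_B)$, use the second preliminary fact to choose a stable set $S'=S_A\cup S_C''$ of $G_A$ with $S_C''\subseteq S_C$ and $\llbracket S'\rrbracket_{(G_A,w)}=\alpha_{A\cup S_C}$, and then form $S=S_A\cup S_B\cup S_C''$, which is stable in $G$ (the only adjacencies beyond those guaranteed by $S'$ and $T$ are between $S_A$ and $S_B$, ruled out since $A$ is strongly anti-complete to $B$). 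Proposition~\ref{weight-cut-part} applied to $S$ gives $\llbracket S\rrbracket_{(G,w)}=\alpha_{A\cup S_C}+\big(\llbracket S_B\cup S_C''\rrbracket_{(G[B\cup C],w)}-\llbracket S_C''\rrbracket_{(G[C],w)}\big)$, while the reduction formula gives $\llbracket T\rrbracket_{(G_B,w_B)}+k=\alpha_{A\cup S_C}+\big(\llbracket S_B\cup S_C\rrbracket_{(G[B\cup C],w)}-\llbracket S_C\rrbracket_{(G[C],w)}\big)$. Thus the reverse inequality reduces to the monotonicity claim that, writing $D=S_C\smallsetminus S_C''$, one has $\llbracket S_B\cup S_C''\rrbracket_{(G[B\cup C],w)}-\llbracket S_C''\rrbracket_{(G[C],w)}\ge\llbracket S_B\cup S_C\rrbracket_{(G[B\cup C],w)}-\llbracket S_C\rrbracket_{(G[C],w)}$.

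The hardest single step is this monotonicity claim, which I expect to prove by a direct expansion of the definition of $\llbracket\cdot\rrbracket$: after cancellation, the left side minus the right side equals $\sum_{u\in S_B}\sum_{v\in D}w(u,v)+\sum_{u\in D}\sum_{v\in B\smallsetminus S_B}\big(w(uv)-w(u,v)\big)$, which is non-negative because $w\ge 0$ and every weight function satisfies $w(uv)\ge w(u,v)$. Intuitively, shrinking the $C$-part from $S_C$ to $S_C''$ can only increase the ``external'' part of the $B$-side weight, since the only terms that change are governed by non-negative cross-weights between $D$ and $B$. With the claim established, $\llbracket S\rrbracket_{(G,w)}\ge\llbracket T\rrbracket_{(G_B,w_B)}+k=\alpha(G_B,w_B)+k$, whence $\alpha(G,w)\ge k+\alpha(G_B,w_B)$, and together with the first inequality this gives the desired equality.
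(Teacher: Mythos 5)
Your proof is correct. The inequality $\alpha(G,w)\le k+\alpha(G_B,w_B)$ is argued essentially as in the paper, but for the reverse inequality you take a genuinely different route. The paper handles the possible mismatch between the $C$-traces of the two sides by an extremal choice: it picks the maximum weight stable set $S_B$ of $(G_B,w_B)$ to be inclusion-wise minimal (and free of zero-vertices), uses minimality to show that $\alpha_{A\cup S_C'}<\alpha_{A\cup S_C}$ for every $S_C'\subsetneqq S_C$, and then shows $\alpha_{A\cup S_C}\le\alpha_{A\cup(S_A\cap C)}$ for the optimal $A$-side extension $S_A$, forcing $S_A\cap C=S_C$ exactly; the glued set $S_A\cup S_B$ then needs no further repair. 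You instead accept that the optimal extension may meet $C$ in a proper subset $S_C''$, shrink the $C$-part on the $B$-side to match, and verify directly that $\llbracket S_B\cup S_C''\rrbracket_{(G[B\cup C],w)}-\llbracket S_C''\rrbracket_{(G[C],w)}\ge\llbracket S_B\cup S_C\rrbracket_{(G[B\cup C],w)}-\llbracket S_C\rrbracket_{(G[C],w)}$. I checked your expansion of the difference as $\sum_{u\in S_B}\sum_{v\in D}w(u,v)+\sum_{u\in D}\sum_{v\in B\smallsetminus S_B}\bigl(w(uv)-w(u,v)\bigr)$, keeping track of the ordered-pair weights, and it is right; non-negativity then follows from $w\ge 0$ and $w(u,v)\le w(uv)$. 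This is essentially the same algebraic identity the paper manipulates inside its minimality argument, but deployed in the opposite direction: the paper uses it to rule out a proper subset by contradiction, you use it constructively to show the repaired glued set loses no weight. Your version buys a more direct argument that avoids the extremal choice and the two-step forcing of $S_C=S_A\cap C$, at the price of carrying out the expansion explicitly; both are sound.
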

\begin{proof}
We begin by observing that for all $X \in \{A,B\}$ and $S \subseteq X \cup C$, we have that $S$ is a stable set of $G_X$ if and only if $S$ is a stable set of $G[X \cup C]$, and furthermore, for all $Y \subseteq X \cup C$, we have that $\llbracket S \cap Y \rrbracket_{(G_X[Y],w)} = \llbracket S \cap Y \rrbracket_{(G[Y],w)}$.

Let us first show that $\alpha(G,w) \leq k+\alpha(G_B,w_B)$. Fix a stable set $S$ of $G$ such that $\llbracket S \rrbracket_{(G,w)} = \alpha(G,w)$. Set $S_A = S \cap (A \cup C)$, $S_B = S \cap (B \cup C)$, and $S_C = S \cap C$. We then have the following:
\begin{displaymath}
\begin{array}{rcllll}
\alpha(G,w) & = & \llbracket S \rrbracket_{(G,w)}
\\
\\
& = & \llbracket S_A \rrbracket_{(G[A \cup C],w)}+\llbracket S_B \rrbracket_{(G[B \cup C],w)}- & & & \text{by Proposition~\ref{weight-cut-part}}
\\
& & -\llbracket S_C \rrbracket_{(G[C],w)}
\\
\\
& = & \llbracket S_A \rrbracket_{(G_A,w)}+\llbracket S_B \rrbracket_{(G_B,w)}- & & &
\\
& & -\llbracket S_C \rrbracket_{(G_B[C],w)}
\\
\\
& = & \llbracket S_A \rrbracket_{(G_A,w)}+\llbracket S_B \rrbracket_{(G_B,w_B)}- & & & \text{by Proposition~\ref{differ-at-C-only}}
\\
& & -\llbracket S_C \rrbracket_{(G_B[C],w_B)}
\\
\\
& = & \llbracket S_A \rrbracket_{(G_A,w)}+\llbracket S_B \rrbracket_{(G_B,w_B)}- & & &
\\
& & -(\alpha_{A \cup S_C}-k)
\\
\\
& \leq & k+\alpha(G_B,w_B)-\alpha_{A \cup S_C}+
\\
& & +\llbracket S_A \rrbracket_{(G_A,w)}
\\
\\
& \leq & k+\alpha(G_B,w_B)-\alpha_{A \cup S_C}+ & & & \text{by Proposition~\ref{prop-ext}}
\\
& & +\llbracket S_A \rrbracket_{{\rm Red}[G_A,w;A \cup S_C]}+
\\
& & +{\rm Ext}[G_A,w;A \cup S_C]
\\
\\
& \leq & k+\alpha(G_B,w_B)-\alpha_{A \cup S_C}+
\\
& & +\alpha({\rm Red}[G_A,w;A \cup S_C])+
\\
& & +{\rm Ext}[G_A,w;A \cup S_C]
\\
\\
& = & k+\alpha(G_B,w_B).
\end{array}
\end{displaymath}
This proves that $\alpha(G,w) \leq k+\alpha(G_B,w_B)$.

It remains to show that $k+\alpha(G_B,w_B) \leq \alpha(G,w)$. Using Proposition~\ref{prop-delete-zero}, we fix a stable set $S_B$ of $G_B$ that contains no zero-vertices of $G_B$ and satisfies $\llbracket S_B \rrbracket_{(G_B,w_B)} = \alpha(G_B,w_B)$; we may assume that $S_B$ was chosen inclusion-minimal with this property, that is, that for all $S_B' \subsetneqq S_B$, we have that $\llbracket S_B' \rrbracket_{(G_B,w_B)} < \alpha(G_B,w_B)$. Set $S_C = S_B \cap C$.

Let us first check that for all $S_C' \subsetneqq S_C$, we have that $\alpha_{A \cup S_C'} < \alpha_{A \cup S_C}$. Fix $S_C' \subsetneqq S_C$, and set $S_B' = (S_B \smallsetminus C) \cup S_C'$. By the minimality of $S_B$, we have that $\llbracket S_B' \rrbracket_{(G_B,w_B)} < \llbracket S_B \rrbracket_{(G_B,w_B)}$. Since $w_B$ is a weight function for $G_B$, we know that $w_B(u,v) \leq w_B(uv)$ for all $uv \in {B \cup C \choose 2}$. We now have that
\begin{displaymath}
\begin{array}{rcl}
0 & < & \llbracket S_B \rrbracket_{(G_B,w_B)}-\llbracket S_B' \rrbracket_{(G_B,w_B)}
\\
\\
& = & \Big(\llbracket S_C \rrbracket_{(G_B[C],w_B)}-\llbracket S_C' \rrbracket_{(G_B[C],w_B)}\Big)+
\\
& & +\Big(\sum\limits_{u \in S_C \smallsetminus S_C'}\sum\limits_{v \in B \smallsetminus S_B} (w_B(u,v)-w_B(uv))\Big)
\\
\\
& \leq & \llbracket S_C \rrbracket_{(G_B[C],w_B)}-\llbracket S_C' \rrbracket_{(G_B[C],w_B)}.
\\
\\
& = & \alpha_{A \cup S_C}-\alpha_{A \cup S_C'},
\end{array}
\end{displaymath}
and consequently,
$\alpha_{A\cup S_C'} < \alpha_{A\cup S_C}$, as we had claimed.

Now, using Proposition~\ref{prop-delete-zero}, we fix a stable set $S_A \subseteq A \cup S_C$ of $G_A$ that contains no zero-vertices of $G_A$ and satisfies $\llbracket S_A \rrbracket_{({\rm Red}[G_A,w;A \cup S_C])} = \alpha({\rm Red}[G_A,w;A \cup S_C])$. By Proposition~\ref{prop-ext}, we have that
\begin{displaymath}
\begin{array}{rcl}
\llbracket S_A \rrbracket_{(G_A,w)} & = & \llbracket S_A \rrbracket_{{\rm Red}[G_A,w;A \cup S_C]}+{\rm Ext}[G_A,w;A \cup S_C]
\\
\\
& = & \alpha({\rm Red}[G_A,w;A \cup S_C])+{\rm Ext}[G_A,w;A \cup S_C]
\\
\\
& = & \alpha_{A \cup S_C}.
\end{array}
\end{displaymath}
Next, note the following:
\begin{displaymath}
\begin{array}{rcllll}
\alpha_{A \cup S_C} & = & \llbracket S_A \rrbracket_{(G_A,w)}
\\
\\
& \leq & \llbracket S_A \rrbracket_{{\rm Red}[G_A,w;A \cup (S_A \cap C)]}+ & & & \text{by Proposition~\ref{prop-ext}}
\\
& & +{\rm Ext}[G_A,w;A \cup (S_A \cap C)]
\\
\\
& \leq & \alpha({\rm Red}[G_A,w;A \cup (S_A \cap C)])+
\\
& & +{\rm Ext}[G_A,w;A \cup (S_A \cap C)]
\\
\\
& = & \alpha_{A \cup (S_A \cap C)}.
\end{array}
\end{displaymath}
Thus, $\alpha_{A \cup S_C} \leq \alpha_{A \cup (S_A \cap C)}$. Now, recall that for all $S_C' \subsetneqq S_C$, we have that $\alpha_{A \cup S_C'} < \alpha_{A \cup S_C}$; since (by the construction of $S_A$) we have that $S_A \cap C \subseteq S_C$, this implies that $S_C = S_A \cap C$.

Set $S = S_A \cup S_B$; since $S_A \cap C = S_C = S_B \cap C$, and since $(A,B,C)$ is a cut-partition of $G$, we readily deduce that $S$ is a stable set of $G$. We now compute:
\begin{displaymath}
\begin{array}{rclllll}
k+\alpha(G_B,w_B) &= & k+\llbracket S_B \rrbracket_{(G_B,w_B)}
\\
\\
& = & k+(\alpha_{A \cup S_C}-k)+
\\
& & +\llbracket S_B \rrbracket_{(G_B,w_B)}-
\\
& & -\llbracket S_C \rrbracket_{(G_B[C],w_B)}
\\
\\
& = & \alpha_{A \cup S_C}+\llbracket S_B \rrbracket_{(G_B,w)}- & & & \text{by Proposition~\ref{differ-at-C-only}}
\\
& & -\llbracket S_C \rrbracket_{(G_B[C],w)}
\\
\\
& = & \llbracket S_A \rrbracket_{(G_A,w)}+
\\
& & +\llbracket S_B \rrbracket_{(G_B,w)}-
\\
& & -\llbracket S_C \rrbracket_{(G_B[C],w)}
\\
\\
& = & \llbracket S_A \rrbracket_{(G[A \cup C],w)}+
\\
& & +\llbracket S_B \rrbracket_{(G[B \cup C],w)}-
\\
& & -\llbracket S_C \rrbracket_{(G[C],w)}
\\
\\
& = & \llbracket S \rrbracket_{(G,w)} & & & \text{by Proposition~\ref{weight-cut-part}}
\\
\\
& \leq & \alpha(G,w).
\end{array}
\end{displaymath}
This completes the argument.
\end{proof}

\begin{lemma} \label{lemma-weights-clique-cut} Let $(G,w)$ be a weighted trigraph, let $C$ be a clique-cutset of $G$, and let $(A,B,C)$ be an associated cut-partition of $G$. Set $G_A = G[A \cup C]$ and $G_B = G[B \cup C]$. For each $C' \subseteq C$, set $\alpha_{A \cup C'} = \alpha({\rm Red}[G_A,w;A \cup C'])+{\rm Ext}[G_A,w;A \cup C']$. Define $w_B:D(G_B) \rightarrow \mathbb{N}$ by setting $w_B(c) = \alpha_{A \cup \{c\}}-\alpha_A$ for all $c \in C$, and $w_B \upharpoonright (D(G_B) \smallsetminus C) = w \upharpoonright (D(G_B) \smallsetminus C)$. Then $w_B$ is a weight function for $G_B$, and $\alpha(G,w) = \alpha_A+\alpha(G_B,w_B)$.
\end{lemma}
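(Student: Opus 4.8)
The plan is to derive the lemma from Proposition~\ref{prop-cut-part-reduction}, applied with $G_A = G[A \cup C]$ and $G_B = G[B \cup C]$ taken verbatim (that is, with \emph{no} strongly anti-adjacent pairs turned semi-adjacent), and with $k = \alpha_A$. The whole point of restricting to a \emph{clique}-cutset is that $C$ is a strong clique, so every stable set of $G_B$ contained in $C$ has size at most one. This collapses the (potentially exponential) family of conditions in the third bullet of Proposition~\ref{prop-cut-part-reduction} down to the two cases $S_C = \emptyset$ and $S_C = \{c\}$, and it is exactly why the simple definition $w_B(c) = \alpha_{A \cup \{c\}} - \alpha_A$ suffices.

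First I would check that $w_B$ is a weight function for $G_B$. The only weights that differ from $w$ are the vertex weights of the $c \in C$, and the two defining properties of a weight function (that the weights of a non-semi-adjacent pair vanish, and that $w_B(u,v) \leq w_B(uv)$) constrain only pair-weights, which $w_B$ inherits unchanged from $w$; since $G_B$ has the same adjacency as $G$ on ${B \cup C \choose 2}$, both properties follow from the fact that $w$ is a weight function for $G$. It then remains to confirm that $w_B(c) \in \mathbb{N}$, i.e.\ $w_B(c) \geq 0$, for each $c \in C$. Here I would apply Proposition~\ref{prop-alpha-nested} to the weighted trigraph $(G_A, w)$ with $R_1 = A$ and $R_2 = \{c\}$ (note these are disjoint subsets of $V(G_A) = A \cup C$), which gives $\alpha_A \leq \alpha_{A \cup \{c\}}$ and hence $w_B(c) \geq 0$; integrality is automatic since all the relevant $\alpha$-values are sums of non-negative integer weights.

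Next I would verify the hypotheses of Proposition~\ref{prop-cut-part-reduction}. The first two bullets are immediate from the definition of $w_B$, since $w_B$ agrees with $w$ on every vertex of $B$ and on every pair of $G_B$. For the third bullet I would use that $C$ is a strong clique of $G$, hence of $G_B$, so $G_B[C]$ is a complete (tri)graph all of whose pairs are strongly adjacent; consequently every pair-weight within $C$ vanishes, and for any stable set $S_C \subseteq C$ of $G_B$ the formula for $\llbracket S_C \rrbracket_{(G_B[C],w_B)}$ collapses to $\sum_{u \in S_C} w_B(u)$. Since a stable set inside a strong clique has size at most one, only two cases arise: $S_C = \emptyset$ gives $\llbracket \emptyset \rrbracket_{(G_B[C],w_B)} = 0 = \alpha_A - \alpha_A = \alpha_{A \cup \emptyset} - k$, and $S_C = \{c\}$ gives $\llbracket \{c\} \rrbracket_{(G_B[C],w_B)} = w_B(c) = \alpha_{A \cup \{c\}} - \alpha_A = \alpha_{A \cup \{c\}} - k$. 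Thus the third bullet holds with $k = \alpha_A$, and Proposition~\ref{prop-cut-part-reduction} yields $\alpha(G,w) = k + \alpha(G_B,w_B) = \alpha_A + \alpha(G_B,w_B)$.

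I do not expect a genuinely hard step: the argument is essentially bookkeeping once the strong-clique collapse is noticed. The only points requiring mild care are to invoke Proposition~\ref{prop-alpha-nested} on $(G_A, w)$ rather than on $(G, w)$ — the quantities $\alpha_{A \cup C'}$ are defined through the reduction and exterior weight of $G_A$, not of $G$ — and to note that altering the vertex weights inside $C$ leaves the two weight-function inequalities intact, as those constrain only pair-weights.
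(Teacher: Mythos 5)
Your proposal is correct and follows exactly the paper's proof: non-negativity of $w_B(c)$ via Proposition~\ref{prop-alpha-nested}, then an application of Proposition~\ref{prop-cut-part-reduction} with $k=\alpha_A$, using the fact that $C$ is a strong clique to reduce the third hypothesis to the cases $S_C=\emptyset$ and $S_C=\{c\}$. Your write-up simply spells out the verification that the paper leaves to inspection.
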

\begin{proof}
By Proposition~\ref{prop-alpha-nested}, we have that $w_B(c) \geq 0$ for all $c \in C$, and it follows immediately that $w_B$ is a weight function for $G_B$. Now, set $k = \alpha_A$. Using the fact that $C$ is a strong clique of $G_B$, we observe that the weight function $w_B$ for $G_B$ satisfies the hypotheses of Proposition~\ref{prop-cut-part-reduction}, and we deduce that $\alpha(G,w) = \alpha_A+\alpha(G_B,w_B)$.
\end{proof}

\begin{lemma} \label{lemma-weights-proper-2-cut} Let $(G,w)$ be a weighted trigraph and let $(A,B,C)$ be a cut-partition of $G$ such that $C$ is a stable set of size two of $G$. Set $C = \{c_1,c_2\}$. For each $X \in \{A,B\}$, let $G_X$ be the trigraph on the vertex set $X \cup C$ in which $c_1c_2$ is a semi-adjacent pair and all other adjacencies are inherited from $G[X \cup C]$. For each $C' \subseteq C$, set $\alpha_{A \cup C'} = \alpha({\rm Red}[G_A,w;A \cup C'])+{\rm Ext}[G_A,w;A \cup C']$. Define $w_B:D(G_B) \rightarrow \mathbb{N}$ as follows:
\begin{itemize}
\item $w_B(c_1) = \alpha_{A \cup C}-w(c_2)$;
\item $w_B(c_2) = w(c_2)$;
\item $w_B(c_1,c_2) = \alpha_{A \cup \{c_1\}}-\alpha_{A \cup C}+w(c_2)$;
\item $w_B(c_2,c_1) = \alpha_{A \cup \{c_2\}}-w(c_2)$;
\item $w_B(c_1c_2) = \alpha_A$;
\item $w_B \upharpoonright \Big(D(G_B) \smallsetminus D(G_B[C])\Big) = w \upharpoonright \Big(D(G_B) \smallsetminus D(G_B[C])\Big)$.
\end{itemize}
Then $w_B$ is a weight function for $G_B$, and $\alpha(G_B,w_B) = \alpha(G,w)$.
\end{lemma}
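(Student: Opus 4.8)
The plan is to obtain the identity $\alpha(G_B,w_B)=\alpha(G,w)$ directly from Proposition~\ref{prop-cut-part-reduction}, applied with $k=0$. The set-up of that proposition matches ours: the quantities $\alpha_{A\cup C'}$ are computed from the same trigraph $G_A$ (in which $c_1c_2$ is semi-adjacent), and $G_B$ is exactly a trigraph obtained from $G[B\cup C]$ by turning the anti-adjacent pair $c_1c_2$ into a semi-adjacent pair. Hence, once we have checked that $w_B$ is a genuine weight function for $G_B$ and that it satisfies the three hypotheses of Proposition~\ref{prop-cut-part-reduction} with $k=0$, the desired conclusion $\alpha(G,w)=0+\alpha(G_B,w_B)$ follows immediately.

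To see that $w_B$ is a weight function, note that on $D(G_B)\smallsetminus D(G_B[C])$ it coincides with $w$, so the weight-function axioms there are inherited from $(G,w)$ (the only pair whose adjacency differs between $G_B$ and $G[B\cup C]$ is $c_1c_2$, which lies inside $C$). It thus suffices to treat the quantities attached to $c_1$, $c_2$, and the semi-adjacent pair $c_1c_2$. Non-negativity of $w_B(c_2)=w(c_2)$ and of $w_B(c_1c_2)=\alpha_A$ is immediate. For $w_B(c_1)=\alpha_{A\cup C}-w(c_2)$ and $w_B(c_2,c_1)=\alpha_{A\cup\{c_2\}}-w(c_2)$ I would invoke Proposition~\ref{prop-S-red-ext} on the one-element stable set $\{c_2\}$ of $G_A$, with $R=A\cup C$ and $R=A\cup\{c_2\}$ respectively, giving $w(c_2)\le\alpha_{A\cup C}$ and $w(c_2)\le\alpha_{A\cup\{c_2\}}$; and non-negativity of $w_B(c_1,c_2)=\alpha_{A\cup\{c_1\}}-\alpha_{A\cup C}+w(c_2)$ follows from the bound $\alpha_{A\cup C}\le\alpha_{A\cup\{c_1\}}+w(c_2)$ furnished by Proposition~\ref{prop-alpha-nested} (with $R_1=A\cup\{c_1\}$ and $R_2=\{c_2\}$). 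The only axiom of the form $w_B(u,v)\le w_B(uv)$ that is not inherited concerns the pair $c_1c_2$. The inequality $w_B(c_2,c_1)\le w_B(c_1c_2)$ reduces to $\alpha_{A\cup\{c_2\}}\le\alpha_A+w(c_2)$, which is again an instance of Proposition~\ref{prop-alpha-nested}. The remaining inequality $w_B(c_1,c_2)\le w_B(c_1c_2)$, i.e.\ $\alpha_{A\cup\{c_1\}}+w(c_2)\le\alpha_A+\alpha_{A\cup C}$, is the one delicate point and the step where I expect the main obstacle to lie; I would prove it by combining $\alpha_{A\cup\{c_1\}}\le\alpha_A+w(c_1)$ (Proposition~\ref{prop-alpha-nested}) with the bound $w(c_1)+w(c_2)\le\alpha_{A\cup C}$ obtained by applying Proposition~\ref{prop-S-red-ext} to the \emph{two}-element stable set $\{c_1,c_2\}$ of $G_A$ (stable precisely because $c_1c_2$ is semi-adjacent in $G_A$), with $R=A\cup C$.

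It then remains to verify the hypotheses of Proposition~\ref{prop-cut-part-reduction}. The first two hold because $w_B$ agrees with $w$ off $D(G_B[C])$, so $w_B(u)=w(u)$ for $u\in B$ and the pair-weights agree for every pair of ${B \cup C \choose 2}\smallsetminus{C \choose 2}$. For the third hypothesis with $k=0$, I would evaluate $\llbracket S_C\rrbracket_{(G_B[C],w_B)}$ for each of the four stable sets $S_C\subseteq C$ of $G_B$ --- namely $\emptyset$, $\{c_1\}$, $\{c_2\}$, and $\{c_1,c_2\}$, all stable since $c_1c_2$ is semi-adjacent --- directly from the definition of $\llbracket\cdot\rrbracket$. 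The weights have been arranged so that the contributions telescope: $\llbracket\emptyset\rrbracket=w_B(c_1c_2)=\alpha_A$, $\llbracket\{c_1\}\rrbracket=w_B(c_1)+w_B(c_1,c_2)=\alpha_{A\cup\{c_1\}}$, $\llbracket\{c_2\}\rrbracket=w_B(c_2)+w_B(c_2,c_1)=\alpha_{A\cup\{c_2\}}$, and $\llbracket\{c_1,c_2\}\rrbracket=w_B(c_1)+w_B(c_2)=\alpha_{A\cup C}$, so that $\llbracket S_C\rrbracket_{(G_B[C],w_B)}=\alpha_{A\cup S_C}$ in all four cases. Proposition~\ref{prop-cut-part-reduction} then gives $\alpha(G,w)=\alpha(G_B,w_B)$, as required.
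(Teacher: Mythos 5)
Your proposal is correct and follows essentially the same route as the paper: the same applications of Propositions~\ref{prop-S-red-ext} and~\ref{prop-alpha-nested} to verify that $w_B$ is a weight function (including the same two-step argument for the delicate inequality $w_B(c_1,c_2)\le w_B(c_1c_2)$), followed by an appeal to Proposition~\ref{prop-cut-part-reduction} with $k=0$. The only difference is that you spell out the four telescoping computations of $\llbracket S_C\rrbracket_{(G_B[C],w_B)}$ that the paper dismisses with ``by inspection,'' and your computations are correct.
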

\begin{proof}
We first show that $w_B$ is a weight function for $G_B$. It suffices to show that $w_B(c_1),w_B(c_1,c_2),w_B(c_2,c_1) \geq 0$ and that $w_B(c_1,c_2),w_B(c_2,c_1) \leq w_B(c_1c_2)$, for $w_B$ clearly satisfies all the other conditions from the definition of a weight function. The fact that $w_B(c_1),w_B(c_2,c_1) \geq 0$ follows immediately from Proposition~\ref{prop-S-red-ext}. Next, Proposition~\ref{prop-alpha-nested} guarantees that $\alpha_{A \cup C} \leq \alpha_{A \cup \{c_1\}}+w(c_2)$, which immediately implies that $w_B(c_1,c_2) \geq 0$. Similarly, Proposition~\ref{prop-alpha-nested} guarantees that $\alpha_{A \cup \{c_2\}} \leq \alpha_A+w(c_2)$, which implies that $w_B(c_2,c_1) \leq w_B(c_1c_2)$. Finally, to show that $w_B(c_1,c_2) \leq w_B(c_1c_2)$, we observe that:
\begin{displaymath}
\begin{array}{rcllll}
w_B(c_1,c_2) & = & \alpha_{A \cup \{c_1\}}-\alpha_{A \cup C}+w(c_2)
\\
\\
& \leq & \Big(\alpha_A+w(c_1)\Big)-\alpha_{A \cup C}+w(c_2) & & & \text{by Proposition~\ref{prop-alpha-nested}}
\\
\\
& = & \alpha_A+\Big(w(c_1)+w(c_2)\Big)-\alpha_{A \cup C}
\\
\\
& \leq & \alpha_A & & & \text{by Proposition~\ref{prop-S-red-ext}}
\\
\\
& = & w_B(c_1c_2).
\end{array}
\end{displaymath}
This proves that $w_B$ is indeed a weight function for $G_B$.

Now, set $k = 0$. We see by inspection that $w_B$ satisfies the hypotheses of Proposition~\ref{prop-cut-part-reduction}, and we deduce that $\alpha(G_B,w_B) = \alpha(G,w)$. This completes the argument.
\end{proof}

\section{Decomposition theorem}
\label{s:dec}

In this section, we state a decomposition theorem for \{ISK4,wheel\}-free trigraphs (see Theorem~\ref{thm-decomp} below), and then we derive an ``extreme'' decomposition theorem for this class of graphs, which states (roughly) that every \{ISK4,wheel\}-free trigraph is either ``basic'' or admits a ``decomposition'' such that one of the ``blocks of decomposition'' is basic (see Theorem~\ref{alg-extreme-decomp} and Corollary~\ref{cor-extreme-decomp}). Here, we state Theorem~\ref{thm-decomp} without proof, but the interested reader can find a complete proof in~\cite{ISK4wheel-decomp}. As explained in the Introduction, the proof of Theorem~\ref{thm-decomp} closely follows the proof of the decomposition theorem for ISK4-free graphs from~\cite{MR2927414}, but the proof of our theorem is easier because we restrict ourselves to the wheel-free case. Interestingly, the fact that we work with trigraphs rather than graphs does not make the proof significantly harder.

\begin{theorem}\cite{ISK4wheel-decomp} \label{thm-decomp} Let $G$ be an \{ISK4,wheel\}-free trigraph. Then at least one of the following holds:
\begin{itemize}
\item $G$ is a series-parallel trigraph;
\item $G$ is a complete bipartite trigraph;
\item $G$ is a line trigraph;
\item $G$ admits a clique-cutset;
\item $G$ admits a stable 2-cutset.
\end{itemize}
\end{theorem}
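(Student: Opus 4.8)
Since Theorem~\ref{thm-decomp} is the structural core of the paper and is proved in full in \cite{ISK4wheel-decomp} by adapting the decomposition theorem for ISK4-free graphs of \cite{MR2927414}, I will only sketch the strategy I would follow rather than attempt the (lengthy) argument. The natural plan is to prove it in \emph{reduced} form: assume that $G$ is a connected \{ISK4,wheel\}-free trigraph admitting neither a clique-cutset nor a stable 2-cutset, and show that $G$ is one of the three basic types (series-parallel, complete bipartite, or a line trigraph). This reduction is harmless: a disconnected $G$ has the empty set as a clique-cutset, and by the remark preceding the theorem every cutset of size at most two is a clique-cutset or a stable 2-cutset, so under the assumptions $G$ has no cutset of size $\le 2$ and is therefore (apart from trivially small cases) $3$-connected.

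First I would set up the governing dichotomy. A trigraph is series-parallel exactly when its full realization has no subdivision of $K_4$ as a (not necessarily induced) subgraph, so if $G$ is not series-parallel then its full realization contains such a subdivision $F$. Because $G$ is ISK4-free, no realization of $G$ contains an \emph{induced} subdivision of $K_4$; in particular $F$ is not induced, so the vertex set of $F$ carries chords that are not used by $F$. The technical heart of the proof is to analyze these chords together with the way the remaining vertices of $G$ attach to $V(F)$. Here wheel-freeness is the decisive simplifying lever: a subdivided edge of $F$ (a long chordless path) together with a vertex having three neighbors on it immediately yields a wheel, and most chord/attachment patterns on a subdivided $K_4$ produce either a wheel or a strictly smaller induced $K_4$-subdivision. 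Excluding all of these forces $V(F)$, and then via $3$-connectivity the whole of $G$, into a very rigid shape.

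The plan for the positive conclusion is to show that these rigidity constraints leave only the three basic outcomes. Once every chord of a $K_4$-subdivision and every external attachment is pinned down, one argues that the triangles of $G$ either interlock in the incidence pattern of the edges of some graph of maximum degree three, in which case $G$ is a line trigraph, or, failing that, $G$ is triangle-free and collapses to a complete bipartite trigraph; series-parallel is the remaining possibility, arising when no $K_4$-subdivision was present at all. Propagating the local picture around $F$ to the entire trigraph is where the hypotheses ``no clique-cutset'' and ``no stable 2-cutset'' are used repeatedly, since any place at which the structure could break would exhibit a cutset of size at most two. This global propagation, carried out as a long sequence of cases on the configurations around a $K_4$-subdivision, is the main obstacle, and it is exactly the portion inherited (and streamlined by wheel-freeness) from \cite{MR2927414}.

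Finally I would address the passage from graphs to trigraphs, which, as noted in \cite{ISK4wheel-decomp}, does not meaningfully lengthen the argument. The point is that ISK4-freeness and wheel-freeness are defined through \emph{all} realizations, so whenever the case analysis wants a given pair to be adjacent in order to exhibit a forbidden subgraph, a semi-adjacent pair may be treated as adjacent, and whenever it wants anti-adjacency, a semi-adjacent pair may be treated as anti-adjacent; thus each semi-adjacent pair can always play whichever role is convenient, which only strengthens the hypotheses and never obstructs a step. Moreover, each of the five conclusions is realization-stable, so the structural outcome obtained at the level of realizations lifts to the trigraph $G$. This is why working with trigraphs rather than graphs, although it makes the theorem formally stronger, does not substantially complicate the proof.
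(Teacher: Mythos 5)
The paper itself states Theorem~\ref{thm-decomp} without proof and defers entirely to \cite{ISK4wheel-decomp}, so there is no in-paper argument to compare yours against; but your submission is explicitly a strategy sketch rather than a proof, and as a proof it has a genuine gap: the entire case analysis around a non-induced $K_4$-subdivision --- which is the whole mathematical content of the theorem --- is absent. Your framing of the reduction is fine (the empty set is a clique-cutset of a disconnected trigraph, and the paper's remark that any cutset of size at most two is a clique-cutset or a stable 2-cutset gives the connectivity you want), and your outline of the dichotomy and of the role of wheel-freeness is consistent with what the paper says about how the proof in \cite{ISK4wheel-decomp} proceeds (it follows \cite{MR2927414}, simplified by the wheel-free hypothesis). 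But sentences such as ``most chord/attachment patterns on a subdivided $K_4$ produce either a wheel or a strictly smaller induced $K_4$-subdivision'' and ``one argues that the triangles of $G$ either interlock in the incidence pattern of the edges of some graph of maximum degree three \dots or $G$ is triangle-free and collapses to a complete bipartite trigraph'' are statements of what must be proved, not proofs; they are exactly the long sequence of configurations that constitutes the theorem.

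Two of your glosses on the passage from graphs to trigraphs are also asserted without justification and are not correct as stated. First, the claim that a semi-adjacent pair ``can always play whichever role is convenient'' needs care: a single step of the case analysis may need to fix several semi-adjacent pairs at once, and the choices must be consistent within one realization, so one cannot simply decide each pair independently in whichever direction is locally convenient. Second, the claim that ``each of the five conclusions is realization-stable'' fails: a complete bipartite trigraph is required by definition to have no semi-adjacent pairs at all (and a two-vertex trigraph with one semi-adjacent pair has every realization complete bipartite without being a complete bipartite trigraph), and a line trigraph must have all of its triangles strong, which is a condition on the trigraph that does not follow from every realization being a line graph. So even granting the entire graph-level case analysis, the lift of the conclusion from realizations to the trigraph $G$ requires an actual argument, which your sketch does not supply.
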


We remark that L{\'e}v{\^e}que, Maffray, and Trotignon~\cite{MR2927414} proved a graph analogue of Theorem~\ref{thm-decomp}. Their theorem had an additional outcome, namely, that $G$ is a ``long rich square.'' In fact, long rich squares are not wheel-free, and so this outcome is 
unnecessary (see~\cite{ISK4wheel-decomp} for details). Furthermore, the last outcome of the decomposition theorem for \{ISK4,wheel\}-free graphs from~\cite{MR2927414} is that the graph admits a proper 2-cutset. In the trigraph context, we work with stable 2-cutsets instead.

Let us say that $G$ is a {\em basic trigraph} if $G$ is either a series-parallel trigraph, a complete bipartite trigraph, or a line trigraph. Note that all induced subtrigraphs of a basic trigraph are basic trigraphs.

A {\em good cut-partition} of a trigraph $G$ is a cut-partition $(A,B,C)$ of $G$ such that either
\begin{itemize}
\item $C$ is a clique-cutset of $G$ such that $|C| \leq 3$ (in this case, $(A,B,C)$ is said to be a good cut-partition of {\em type clique}), or
\item $C$ is a stable 2-cutset of $G$, and each of $G[A \cup C]$ and $G[B \cup C]$ contains a narrow path between the two vertices of $C$ (in this case, $(A,B,C)$ is said to be a good cut-partition of {\em type stable}).
\end{itemize}

\begin{proposition} \label{prop-good-cut-part} Let $G$ be an \{ISK4,wheel\}-free trigraph. Then the following are equivalent:
\begin{itemize}
\item[(a)] $G$ admits a clique-cutset or a stable 2-cutset;
\item[(b)] $G$ admits a good cut-partition.
\end{itemize}
\end{proposition}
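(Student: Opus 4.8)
The plan is to prove the two implications separately. The direction (b)~$\Rightarrow$~(a) is immediate: a good cut-partition $(A,B,C)$ is, by definition, either of type clique, in which case $C$ is a clique-cutset, or of type stable, in which case $C$ is a stable 2-cutset; in either case (a) holds, so nothing further is needed.

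For (a)~$\Rightarrow$~(b) I would distinguish two cases according to which cutset (a) provides. First, suppose $G$ admits a clique-cutset $C$, and let $(A,B,C)$ be an associated cut-partition. The only obstruction to $(A,B,C)$ being a good cut-partition of type clique is the requirement $|C| \le 3$, and this is where I would use the hypothesis: since $G$ is ISK4-free, every realization of $G$ is ISK4-free, and as $K_4$ is itself a (trivial) subdivision of $K_4$, no realization---in particular, not the full realization---contains $K_4$. Hence every clique of the full realization has at most three vertices; since $C$ is a strong clique, it is a clique of the full realization, so $|C| \le 3$, and $(A,B,C)$ is good of type clique.

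Second, suppose $G$ admits a stable 2-cutset $C = \{a,b\}$. If $G$ also admits a clique-cutset, we are done by the previous paragraph, so I may assume it does not; since the empty set and all singletons are strong cliques, this means $G$ is connected and has no cut-vertex. Let $D_1,\dots,D_k$ (with $k \ge 2$) be the components of $G \smallsetminus \{a,b\}$. The heart of the argument is a dichotomy for each $D_i$: because $G$ is connected, $D_i$ sends an edge to $a$ or to $b$, and I claim it sends an edge to both. Indeed, if some $D_i$ were adjacent to $a$ only, then---since distinct components of $G \smallsetminus \{a,b\}$ are strongly anti-complete and $D_i$ sends no edge to $b$---every neighbor of $D_i$ outside $D_i$ would lie in $\{a\}$, so $G \smallsetminus a$ would separate $D_i$ from the rest (nonempty because $k \ge 2$), making $a$ a cut-vertex, a contradiction. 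Consequently each $G[D_i \cup \{a,b\}]$ is connected, and by the observation that a connected trigraph contains a narrow path between any two of its vertices, each contains a narrow path between $a$ and $b$. Setting $A = D_1$ and $B = D_2 \cup \dots \cup D_k$ then yields a cut-partition $(A,B,C)$---distinct components being strongly anti-complete---in which both $G[A \cup C]$ and $G[B \cup C]$ contain a narrow path between $a$ and $b$ (for the latter, the narrow path through $D_2$ remains induced, hence a narrow path, in the larger subtrigraph). Thus $(A,B,C)$ is good of type stable.

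I expect the cut-vertex step to be the main point requiring care: the dichotomy ``each component attaches to both $a$ and $b$, or else $G$ has a cut-vertex, hence a clique-cutset'' is precisely what converts a bare stable 2-cutset into either a type-stable good cut-partition or a reduction to the clique case. A secondary subtlety is verifying that a narrow path found inside $G[D_2 \cup \{a,b\}]$ survives as a narrow path in $G[B \cup C]$, which holds by transitivity of the induced-subtrigraph relation. I note that wheel-freeness is never used; only ISK4-freeness is invoked, and solely to bound the size of a clique-cutset by three.
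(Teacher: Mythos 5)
Your proof is correct and follows essentially the same route as the paper's: the clique case is settled by ISK4-freeness forcing $|C|\le 3$, and the stable case reduces to the observation that, absent clique-cutsets (hence cut-vertices), every component of $G\smallsetminus C$ attaches to both vertices of $C$, yielding the required narrow paths. The only cosmetic difference is that you build the cut-partition from the components of $G\smallsetminus\{a,b\}$ directly, whereas the paper starts from an arbitrary cut-partition induced by $C$ and then passes to a component of $G[A]$; these amount to the same argument.
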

\begin{proof}
Clearly, (b) implies (a). For the reverse, we suppose that $G$ admits a clique-cutset or a stable 2-cutset, and we show that $G$ admits a good cut-partition. If $G$ admits a clique-cutset, then let $C$ be a clique-cutset of $G$, and otherwise, let $C$ be a stable 2-cutset of $G$. Let $(A,B,C)$ be any cut-partition of $G$ induced by $C$. If $C$ is a clique-cutset, then since $G$ is ISK4-free, we see that $|C| \leq 3$, and it follows that $(A,B,C)$ is a good cut-partition of $G$ of type clique. So assume that $C$ is a stable 2-cutset. (Note that this means that $G$ admits no clique-cutset, and in particular, $G$ is connected and contains no cut-vertices.) Set $C = \{c_1,c_2\}$. We claim that $(A,B,C)$ is a good cut-partition of $G$ of type stable. To prove this, we must only show that each of $G[A \cup C]$ and $G[B \cup C]$ contains a narrow path between $c_1$ and $c_2$. By symmetry, it suffices to show that $G[A \cup C]$ contains a narrow path between $c_1$ and $c_2$. Let $A_1$ be the vertex set of a component of $G[A]$. Vertex $c_1$ must have a neighbor in $A_1$, for otherwise, $c_2$ would be a cut-vertex of $G$; similarly, vertex $c_2$ has a neighbor in $A_1$. Thus, $G[A_1 \cup \{c_1,c_2\}]$ is connected, and it follows that $G[A_1 \cup \{c_1,c_2\}]$ (and consequently $G[A \cup C]$ as well) contains a narrow path between $c_1$ and $c_2$. Thus, $(A,B,C)$ is a good cut-partition of $G$ of type stable. This completes the argument.
\end{proof}

Theorem~\ref{thm-decomp} and Proposition~\ref{prop-good-cut-part} immediately imply the following.

\begin{corollary} \label{cor-decomp} Let $G$ be an \{ISK4,wheel\}-free trigraph. Then either $G$ is a basic trigraph, or $G$ admits a good cut-partition.
\end{corollary}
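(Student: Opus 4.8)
The plan is to combine Theorem~\ref{thm-decomp} directly with Proposition~\ref{prop-good-cut-part}, case-splitting on the five possible outcomes of the former. First I would apply Theorem~\ref{thm-decomp} to the \{ISK4,wheel\}-free trigraph $G$, which guarantees that at least one of its five alternatives holds: $G$ is series-parallel, $G$ is complete bipartite, $G$ is a line trigraph, $G$ admits a clique-cutset, or $G$ admits a stable 2-cutset.

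The first three outcomes are handled immediately by definition: recall that $G$ is declared a \emph{basic trigraph} precisely when it is a series-parallel trigraph, a complete bipartite trigraph, or a line trigraph. So if any of the first three alternatives of Theorem~\ref{thm-decomp} holds, then $G$ is basic, and we are done with this branch.

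For the remaining two outcomes, note that each asserts exactly condition~(a) of Proposition~\ref{prop-good-cut-part}, namely that $G$ admits a clique-cutset or a stable 2-cutset. Since $G$ is \{ISK4,wheel\}-free, Proposition~\ref{prop-good-cut-part} applies, and its equivalence of~(a) and~(b) lets me conclude that $G$ admits a good cut-partition. Thus in all five cases $G$ is either basic or admits a good cut-partition, which is exactly the assertion of the corollary.

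I expect no genuine obstacle here: the corollary is a formal consequence of two results already in hand, so the entire argument is a short case distinction with the real content deferred to Theorem~\ref{thm-decomp} (stated without proof) and to Proposition~\ref{prop-good-cut-part} (where the nontrivial work of upgrading a bare cutset into a \emph{good} cut-partition—in particular producing the narrow paths required in the type-stable case—has already been carried out). The only thing to verify carefully is that the labels of the five outcomes line up with the definition of basic trigraph and with hypothesis~(a), which is immediate.
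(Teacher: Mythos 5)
Your proposal is correct and is exactly the argument the paper intends: the corollary is stated as an immediate consequence of Theorem~\ref{thm-decomp} together with Proposition~\ref{prop-good-cut-part}, with the first three outcomes covered by the definition of a basic trigraph and the last two upgraded to a good cut-partition via the equivalence in Proposition~\ref{prop-good-cut-part}.
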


Our goal for the remainder of the section is to derive an ``extreme decomposition theorem'' from Corollary~\ref{cor-decomp} (see Theorem~\ref{alg-extreme-decomp} and Corollary~\ref{cor-extreme-decomp}).

Given a good cut-partition $(A,B,C)$ of a trigraph $G$, and given $X \in \{A,B\}$, we define the {\em $X$-block} of $G$ with respect to $(A,B,C)$ as follows:
\begin{itemize}
\item if $(A,B,C)$ is of type clique, then $G_X = G[X \cup C]$;
\item if $(A,B,C)$ is of type stable, then $G_X$ is the trigraph obtained from $G[X \cup C]$ by making the two vertices of $C$ semi-adjacent.
\end{itemize}
We remark that $G_X$ is well-defined because every good cut-partition is either of type clique or of type stable, but not both. We also remark that if $(A,B,C)$ is of type stable and the two vertices of $C$ are semi-adjacent in $G$, then $G_X = G[X \cup C]$.

\begin{proposition} \label{blocks-free} Let $(A,B,C)$ be a good cut-partition of an \{ISK4,wheel\}-free trigraph $G$, and for each $X \in \{A,B\}$, let $G_X$ be the $X$-block of $G$ with respect to $(A,B,C)$. Then $G_A$ and $G_B$ are \{ISK4,wheel\}-free.
\end{proposition}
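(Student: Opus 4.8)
The plan is to prove that $G_A$ is \{ISK4,wheel\}-free; the claim for $G_B$ then follows by symmetry. First I would dispose of the easy cases. If $(A,B,C)$ is of type clique, then $G_A = G[A \cup C]$ is an induced subtrigraph of $G$, and since the class of \{ISK4,wheel\}-free trigraphs is closed under taking induced subtrigraphs (immediate from the definition of $H$-freeness for trigraphs, since any realization of an induced subtrigraph extends to a realization of the whole trigraph), $G_A$ is \{ISK4,wheel\}-free. The same argument applies if $(A,B,C)$ is of type stable and $c_1,c_2$ are already semi-adjacent in $G$, for then again $G_A = G[A \cup C]$. So the only substantive case is when $(A,B,C)$ is of type stable and $c_1,c_2$ are strongly anti-adjacent in $G$, so that $G_A$ is obtained from $G[A \cup C]$ by turning $c_1c_2$ into a semi-adjacent pair.

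In that case I would fix an arbitrary realization $\widetilde{G_A}$ of $G_A$ and argue that it is \{ISK4,wheel\}-free, splitting according to whether $c_1c_2$ is an edge of $\widetilde{G_A}$. If $c_1c_2$ is a non-edge, then $\widetilde{G_A}$ is in fact a realization of the induced subtrigraph $G[A \cup C]$ of $G$, and we are done as above. If $c_1c_2$ is an edge, I would suppose for contradiction that $\widetilde{G_A}$ contains an induced subgraph $H$ that is an ISK4 or a wheel. Since $H$ is induced, if $V(H)$ omits one of $c_1,c_2$ then $H$ is unaffected by the decision at $c_1c_2$, hence is an induced subgraph of a realization of $G[A \cup C] \subseteq G$, which is impossible; and if $V(H)$ contains both $c_1$ and $c_2$ then necessarily $c_1c_2 \in E(H)$. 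So I may assume $c_1,c_2 \in V(H)$ and $c_1c_2 \in E(H)$.

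The key device is the narrow path guaranteed by the type-stable hypothesis: $G[B \cup C]$ contains a narrow path $P$ between $c_1$ and $c_2$ whose interior vertices all lie in $B$, and which are therefore strongly anti-complete to $A$ and (within $C$) adjacent only to the endpoints $c_1,c_2$. Let $\widetilde P$ be the full realization of $P$, a genuine path of length at least two. I would build a realization $\widetilde G^\ast$ of the induced subtrigraph $G^\ast = G[A \cup C \cup V(\widetilde P)]$ of $G$ by copying all adjacency decisions of $\widetilde{G_A}$ on $A \cup C$ except that $c_1c_2$ is made a non-edge, and realizing $P$ as $\widetilde P$; these decisions are consistent precisely because $\widetilde P$ keeps $c_1,c_2$ non-adjacent. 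Since $G^\ast$ is an induced subtrigraph of $G$, the graph $\widetilde G^\ast$ is \{ISK4,wheel\}-free. The plan is then to transform $H$ into a forbidden structure inside $\widetilde G^\ast$ by replacing the edge $c_1c_2$ with the path $\widetilde P$; the anti-completeness of the interior of $\widetilde P$ to $A$ ensures that no unwanted chords are introduced, so the resulting subgraph is induced.

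The final and most delicate step is checking that this substitution really does yield an ISK4 or a wheel, and this is where I expect the main obstacle. When $H$ is an ISK4, subdividing the edge $c_1c_2$ produces another induced subdivision of $K_4$, so $\widetilde G^\ast$ contains an ISK4. When $H$ is a wheel and $c_1c_2$ is a rim edge, the substitution lengthens the rim and preserves the wheel. The problematic subcase is when $c_1c_2$ is a spoke, say with center $c_1$ and $c_2$ on the rim, since subdividing a spoke need not preserve a wheel. I would handle it by counting the spokes at $c_1$: if $c_1$ has at least four neighbors on the rim, then deleting $c_1c_2$ still leaves at least three spokes, so $\widetilde G^\ast$ contains a wheel; if $c_1$ has exactly three neighbors $a,b,c_2$ on the rim, then I would exhibit an induced subdivision of $K_4$ in $\widetilde G^\ast$ with branch vertices $c_1,a,b,c_2$, whose six branches are the spokes $c_1a$ and $c_1b$, the path $c_1\,\widetilde P\,c_2$, and the three arcs of the rim joining $a,b,c_2$, using the fact that a wheel with exactly three spokes is itself a subdivision of $K_4$. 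In every case $\widetilde G^\ast$ contains an ISK4 or a wheel, contradicting its \{ISK4,wheel\}-freeness; hence every realization of $G_A$ is \{ISK4,wheel\}-free, so $G_A$ is \{ISK4,wheel\}-free, and by symmetry so is $G_B$.
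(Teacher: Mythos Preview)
Your proof is correct and follows essentially the same approach as the paper: reduce to the type-stable case with $c_1c_2$ realized as an edge, then use the narrow path in $G[B\cup C]$ to replace that edge by a path inside a realization of an induced subtrigraph of $G$. The only difference is cosmetic: the paper phrases the final step as the single observation that the resulting realization is a subdivision of $\widetilde{G_A}$ and that every subdivision of an ISK4 is an ISK4 while every subdivision of a wheel contains an induced wheel or an ISK4, whereas you unpack this fact into the explicit rim-edge/spoke case analysis.
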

\begin{proof}
By symmetry, it suffices to show that $G_A$ is \{ISK4,wheel\}-free. We may assume that $G_A \neq G[A \cup C]$, for otherwise, we are done. It now follows from the construction of $G_A$ that $(A,B,C)$ is of type stable, and that the two vertices of $C$ (call them $c_1$ and $c_2$) are strongly anti-adjacent in $G$. Furthermore, $G_A$ is obtained from $G[A \cup C]$ by turning the strongly anti-adjacent pair $c_1c_2$ into a semi-adjacent pair. Let $\widetilde{G}_A$ be some realization of $G_A$; we must show that $\widetilde{G}_A$ is \{ISK4,wheel\}-free. If $c_1c_2$ is a non-edge of $\widetilde{G}_A$, then $\widetilde{G}_A$ is an induced subgraph of some realization of $G[A \cup C]$, and since $G$ is \{ISK4,wheel\}-free, so is $\widetilde{G}_A$. So assume that $c_1c_2$ is an edge of $\widetilde{G}_A$. Since $(A,B,C)$ is a good cut-partition of $G$ of type stable, we know that $G[B \cup C]$ contains a narrow path $P$ between $c_1$ and $c_2$. Then some realization $H$ of $G[A \cup V(P)]$ is a subdivision of $\widetilde{G}_A$. Since $G$ is \{ISK4,wheel\}-free, so is $H$. Note that every subdivision of an ISK4 is an ISK4, and that every subdivision of a wheel contains either an induced wheel or an ISK4. Thus, if $\widetilde{G}_A$ contained an ISK4 or an induced wheel, then all its subdivisions would also contain an ISK4 or an induced wheel. Since the \{ISK4,wheel\}-free graph $H$ is a subdivision of $\widetilde{G}_A$, it follows that $\widetilde{G}_A$ is an \{ISK4,wheel\}-free graph. This completes the argument.
\end{proof}

\begin{proposition} \label{alg-find-good-cut-part} There is an algorithm with the following specifications:
\begin{itemize}
\item Input: a trigraph $G$;
\item Output: exactly one of the following:
\begin{itemize}
\item a good cut-partition $(A,B,C)$ of $G$ of type clique, together with the true statement ``$(A,B,C)$ is a good cut-partition of $G$ of type clique'';
\item a good cut-partition $(A,B,C)$ of $G$ of type stable, together with the true statement ``$(A,B,C)$ is a good cut-partition of $G$ of type stable, and $G$ does not admit a good cut-partition of type clique'';
\item the true statement ``$G$ does not admit a good cut-partition'';
\end{itemize}
\item Running time: $O(n^5)$, where $n = |V(G)|$.
\end{itemize}
\end{proposition}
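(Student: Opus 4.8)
The plan is to run two exhaustive searches in sequence: first look for a good cut-partition of type clique, and, only if none exists, look for one of type stable. Writing $n = |V(G)|$, I will use throughout that connectivity of a trigraph is connectivity of its full realization, and that this realization can be computed, and the components of any induced subtrigraph determined, in $O(n^2)$ time by a single graph search.

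For the type-clique phase, I would enumerate every set $C \subseteq V(G)$ with $|C| \le 3$; there are $O(n^3)$ of these. For each I would test in $O(1)$ time whether $C$ is a strong clique, and if so test in $O(n^2)$ time whether $G \smallsetminus C$ is disconnected. The first such $C$ found yields a good cut-partition of type clique by taking $A$ to be the vertex set of one component of $G \smallsetminus C$ and $B = V(G) \smallsetminus (A \cup C)$. Since a clique-cutset of size at most three is precisely a strong clique of size at most three whose deletion disconnects $G$, this search is exhaustive, so its failure certifies that $G$ has no good cut-partition of type clique. The cost is $O(n^3) \cdot O(n^2) = O(n^5)$.

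For the type-stable phase (entered only after the clique search fails), the key is the following characterization, which I would prove as the main step: for a strongly anti-adjacent pair $C = \{c_1,c_2\}$, the trigraph $G$ admits a good cut-partition $(A,B,C)$ of type stable if and only if $G \smallsetminus C$ has at least two components in each of which both $c_1$ and $c_2$ have a neighbor. For the backward direction, given two such components $D_1,D_2$, I set $A = D_1$ and $B = V(G)\smallsetminus(C \cup D_1)$; then $C$ is a stable 2-cutset, $A$ is strongly anti-complete to $B$ (distinct components of $G \smallsetminus C$ are strongly anti-adjacent), and each of $G[D_1 \cup C]$ and $G[D_2 \cup C]$ is connected, hence contains a narrow path between $c_1$ and $c_2$ by the narrow-path remark of Section~\ref{s:trigraphs}. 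For the forward direction, a narrow path between $c_1$ and $c_2$ in $G[A \cup C]$ has at least one internal vertex, since $c_1c_2$ is not an edge of the full realization; its internal vertices are consecutively adjacent and so lie inside a single component of $G \smallsetminus C$ to which both $c_1$ and $c_2$ attach (here I use that $A$ is a union of components of $G \smallsetminus C$, as $A$ is strongly anti-complete to $B$), and the analogous component coming from $B$ is distinct. With this characterization in hand, for each of the $O(n^2)$ strongly anti-adjacent pairs I would compute the components of $G \smallsetminus C$ and count those meeting both $c_1$ and $c_2$, in $O(n^2)$ time, emitting the partition above as soon as the count reaches two. This phase costs $O(n^2) \cdot O(n^2) = O(n^4)$, and its exhaustiveness—every type-stable good cut-partition has $C$ equal to a strongly anti-adjacent pair—justifies the final ``$G$ does not admit a good cut-partition'' output when both phases fail.

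The main obstacle is the stable-phase characterization: one must argue carefully that the internal vertices of a narrow path between $c_1$ and $c_2$ cannot spread across several components of $G \smallsetminus C$, and conversely that a side of the partition carries such a narrow path exactly when it contains one of the doubly-attached components. Everything else is routine bookkeeping, and since the clique phase dominates at $O(n^5)$ while the stable phase is only $O(n^4)$, the total running time matches the claimed $O(n^5)$ bound.
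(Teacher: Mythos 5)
Your overall architecture (an exhaustive clique phase first, then a stable phase, with the clique phase dominating at $O(n^5)$) is the same as the paper's, and your characterization of when a \emph{strongly anti-adjacent} pair yields a good cut-partition of type stable is correct as far as it goes. However, there is a genuine gap in the stable phase: you enumerate only strongly anti-adjacent pairs, asserting that ``every type-stable good cut-partition has $C$ equal to a strongly anti-adjacent pair.'' This is false under the paper's definitions. A stable set of a trigraph is a set of pairwise \emph{anti-adjacent} vertices, and anti-adjacency includes semi-adjacency; hence a stable 2-cutset $C=\{c_1,c_2\}$ may be a semi-adjacent pair, and the paper's algorithm accordingly lists ``all (not necessarily strong) stable sets of size two.'' Concretely, take $V(G)=\{c_1,c_2,a,b\}$ with $c_1c_2$ semi-adjacent, $ab$ strongly anti-adjacent, and each of $a,b$ strongly adjacent to each of $c_1,c_2$. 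Then $(\{a\},\{b\},\{c_1,c_2\})$ is a good cut-partition of type stable (the narrow-path condition is witnessed by $G[\{c_1,c_2\}]$ itself, whose full realization is the one-edge path $c_1-c_2$), while $G$ has no clique-cutset of size at most three and no strongly anti-adjacent pair that is a cutset; your algorithm would wrongly output ``$G$ does not admit a good cut-partition.''

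The omission is tied to the one step of your argument that genuinely uses strong anti-adjacency: in the forward direction of your characterization you argue that a narrow path between $c_1$ and $c_2$ ``has at least one internal vertex, since $c_1c_2$ is not an edge of the full realization,'' which fails for semi-adjacent pairs. The fix is cheap: for each semi-adjacent pair $\{c_1,c_2\}$ that is a cutset, any induced cut-partition is automatically of type stable, since $G[\{c_1,c_2\}]$ is already a narrow path between $c_1$ and $c_2$ contained in both blocks; so you need only add these $O(n^2)$ pairs to the enumeration and test whether each disconnects the full realization, keeping the phase at $O(n^4)$. With that correction your proof goes through. For comparison, the paper reaches the conclusion differently: after the clique phase fails, $G$ is connected with no cut-vertex (the empty and singleton cliques were tested), and from this it deduces that \emph{every} stable 2-cutset, of either kind, automatically satisfies the narrow-path condition on both sides; your direct two-components characterization is a reasonable self-contained alternative once extended to semi-adjacent pairs.
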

\begin{proof}
Let $G_f$ be the full realization of $G$; clearly, $G_f$ can be constructed in $O(n^2)$ time.
We first form a list $C_1,\dots,C_k$ of all (possibly empty) strong cliques of $G$ of size at most three; there are at most ${n \choose 0}+{n \choose 1}+{n \choose 2}+{n \choose 3}$ such cliques, and the list $C_1,\dots,C_k$ can be found in time $O(n^3)$. For each $i \in \{1,\dots,k\}$, we can determine in time $O(n^2)$ whether $C_i$ is a cutset of $G_f$; since we are testing $O(n^3)$ cliques, we can determine whether $G$ has a clique-cutset of size at most three in $O(n^5)$ time. If we determined that some $C_i$ from the list is a cutset of $G_f$ (and therefore of $G$), then we can find the components $A_1,\dots,A_t$ ($t \geq 2$) of $G \smallsetminus C_i$ in time $O(n^2)$. In this case, $(V(A_1),\bigcup_{j=2}^t V(A_j),C_i)$ is a good cut-partition of $G$ type clique, and the algorithm returns this cut-partition and stops. So assume that the algorithm determined that $G$ contains no clique-cutsets of size at most three, and consequently, $G$ admits not good cut-partition of type clique. (In particular then, $G$ is connected and contains no cut-vertices.)

We then form a list $S_1,\dots,S_\ell$ of all (not necessarily strong) stable sets of size two of $G$. There are at most ${n \choose 2}$ such stable sets, and so this list can be formed in $O(n^2)$ time. For each $i \in \{1,\dots,\ell\}$, we can determine in time $O(n^2)$ whether $S_i$ is a cutset of $G_f$; since there are $O(n^2)$ sets in our list, testing the whole list takes $O(n^4)$ time. If none of $S_1,\dots,S_\ell$ is a cutset of $G_f$, then $G$ contains no stable 2-cutsets; in this case, the returns the true statement that $G$ admits no good cut-partition and stops. So assume that the algorithm determined that some $S_i$ from the list is a cutset of $G_f$ (and therefore of $G$); clearly, $S_i$ is a stable 2-cutset of $G$. We now find the components $A_1,\dots,A_t$ ($t \geq 2$) of $G_f \smallsetminus S_i$, and using the fact that $G$ is connected and admits no cut-vertex, we deduce that $(V(A_1),\bigcup_{j=2}^t V(A_j),S_i)$ is a good cut-partition of $G$ of type stable. The algorithm now returns this cut-partition and stops.

It is clear that the algorithm is correct, and that its running time is $O(n^5)$.
\end{proof}

\begin{lemma} \label{alg-good-cut-part-rec} There is an algorithm with the following specifications:
\begin{itemize}
\item Input: a trigraph $G$ and a good cut-partition $(A,B,C)$ of $G$;
\item Output: either the true statement ``the $A$-block of $G$ with respect to $(A,B,C)$ does not admit a good cut-partition,'' or a good cut-partition $(A',B',C')$ of $G$ such that $A' \cup C' \subsetneqq A \cup C$;
\item Running time: $O(n^5)$, where $n = |V(G)|$.
\end{itemize}
\end{lemma}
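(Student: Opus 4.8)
The plan is to reduce the problem to \emph{detecting} a good cut-partition of the block $G_A$ itself, and then to \emph{lift} any such cut-partition back into $G$. First I would construct the $A$-block $G_A$ of $G$ with respect to $(A,B,C)$ in $O(n^2)$ time. By Proposition~\ref{blocks-free}, $G_A$ is \{ISK4,wheel\}-free, so I may run the algorithm of Proposition~\ref{alg-find-good-cut-part} on $G_A$ in $O(n^5)$ time. If it reports that $G_A$ has no good cut-partition, I return the statement that the $A$-block admits no good cut-partition and stop. Otherwise it returns a good cut-partition $(A_0,B_0,C_0)$ of $G_A$ in which, by inspection of that algorithm, $A_0$ is the vertex set of a \emph{single} component of $G_A\setminus C_0$ (this single-component fact is used below). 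My task is then to convert $(A_0,B_0,C_0)$ into a good cut-partition $(A',B',C')$ of $G$ with $A'\cup C'\subsetneqq A\cup C$.

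For the lifting I would attach all of $B$ to the side of $(A_0,B_0,C_0)$ that meets $C$. Since $A$ is strongly anti-complete to $B$ in $G$, every neighbour of $B$ in $A\cup C$ lies in $C$; and $C$ is never split between $A_0$ and $B_0$, because any two vertices of $C$ are adjacent in $G_A$ (in the type-stable block the two vertices of $C$ are made semi-adjacent, and in the type-clique block $C$ is a strong clique). Hence I may assume $C\subseteq B_0\cup C_0$, so that $A_0\cap C=\emptyset$ and $A_0\subseteq A$. I set $A'=A_0$, $C'=C_0$, and $B'=B_0\cup B$. Then $A'$ and $B'$ are non-empty, and $A'$ is strongly anti-complete to $B'$ in $G$ (using $A_0\subseteq A$, the fact that $A$ is strongly anti-complete to $B$, and that the only modified pair $c_1c_2$ lies inside $C$, hence between no vertex of $A_0$ and $B_0$); so $C'=C_0$ is a cutset of $G$. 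Strictness is automatic: $A'\cup C'=A_0\cup C_0=V(G_A)\setminus B_0\subsetneqq V(G_A)=A\cup C$ because $B_0\neq\emptyset$. It remains to check that $C'$ is a \emph{good} cutset of $G$ of the same type as $C_0$.

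The only genuine difficulty arises when $(A,B,C)$ is of type stable, so that $G_A$ is obtained from $G[A\cup C]$ by making the pair $C=\{c_1,c_2\}$ semi-adjacent: then (when $c_1c_2$ is strongly anti-adjacent in $G$) this pair is an edge of the full realization of $G_A$ but a non-edge of $G$, so a certificate verified in $G_A$ need not survive in $G$. (When $(A,B,C)$ is of type clique, $G_A=G[A\cup C]$ is an induced subtrigraph of $G$ and every certificate transfers verbatim.) If $C_0$ is of clique type there is nothing to fear, for the semi-adjacent pair $c_1c_2$ can never lie inside a strong clique; hence $C_0$ is a strong clique of $G$ as well, and $C'=C_0$ is a clique-cutset of $G$ of size at most three. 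So suppose $C_0=\{d_1,d_2\}$ is of stable type; I must then exhibit narrow paths between $d_1$ and $d_2$ in both $G[A'\cup C']$ and $G[B'\cup C']$.

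On the $A'$-side, $A'\cup C'=A_0\cup C_0$ contains both $c_1$ and $c_2$ only if $C\subseteq C_0$, i.e.\ only if $C_0=C$ (since $A_0\cap C=\emptyset$). If $C_0\neq C$, then $c_1c_2\not\subseteq A_0\cup C_0$, so $G[A_0\cup C_0]=G_A[A_0\cup C_0]$ and the narrow path transfers unchanged. If $C_0=C$, then $G_A$ has no cut-vertex (the detection algorithm seeks stable $2$-cutsets only after ruling out all clique-cutsets), so, exactly as in the proof of Proposition~\ref{prop-good-cut-part}, each of $c_1,c_2$ has a neighbour in the single component $A_0$; hence $G[A_0\cup C]$ is connected, and, since any two vertices of a connected trigraph are joined by a narrow path, it contains a narrow path between $c_1$ and $c_2$. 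On the $B'$-side I argue by connectivity: starting from a narrow path $Q$ between $d_1$ and $d_2$ in $G_A[B_0\cup C_0]$, the only way $Q$ can fail in $G$ is by breaking at $c_1c_2$; but $(A,B,C)$ of type stable provides a narrow path between $c_1$ and $c_2$ inside $G[B\cup C]\subseteq G[B'\cup C']$, so $c_1$ and $c_2$—and therefore $d_1$ and $d_2$, joined to them by the surviving portions of $Q$—lie in a common component of $G[B'\cup C']$, which thus contains a narrow path between $d_1$ and $d_2$. This shows $(A',B',C')$ is a good cut-partition of $G$ of the required type, and the running time is dominated by the single call to Proposition~\ref{alg-find-good-cut-part}, i.e.\ $O(n^5)$. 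The anticipated main obstacle is precisely this last bookkeeping: verifying that the good-cutset certificates survive the replacement of the semi-adjacent pair $c_1c_2$ by a non-edge.
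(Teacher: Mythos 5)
Your proposal is correct and follows essentially the same route as the paper's proof: build the $A$-block, run the cut-partition detection algorithm on it, use the fact that $C$ is a clique of $G_A$ to place $C$ entirely on one side, lift the returned partition by attaching $B$ to that side, and verify the narrow-path certificates survive the reversion of the semi-adjacent pair of $C$ to a strong anti-adjacency (re-routing through the narrow path in $G[B\cup C]$ when needed). The only differences are cosmetic — you argue the $B'$-side via connectedness rather than explicitly splicing the two narrow paths, and you split the $A'$-side into the cases $C_0=C$ and $C_0\neq C$ where the paper uses a uniform component argument.
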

\begin{proof}
We first form $G_A$, the $A$-block of $G$ with respect to $(A,B,C)$; this takes $O(n^2)$ time. We then apply the algorithm from Proposition~\ref{alg-find-good-cut-part} to $G_A$; this takes $O(n^5)$ time. If the algorithm from Proposition~\ref{alg-find-good-cut-part} returns the answer that $G_A$ admits no good cut-partition, then we are done. So assume that the algorithm from Proposition~\ref{alg-find-good-cut-part} returned a good cut-partition $(A_1,B_1,C_1)$ of $G_A$. By the construction of $G_A$, we know that $C$ is a clique of $G_A$ (indeed, $C$ is either a strong clique of size at most three of $G_A$, or a set of two semi-adjacent vertices of $G_A$), and consequently, either $C \subseteq A_1 \cup C_1$ or $C \subseteq B_1 \cup C_1$. By symmetry, we may assume that $C \subseteq B_1 \cup C_1$. Now $(A_1,B \cup B_1,C_1)$ is a cut-partition of $G$, and clearly $A_1 \cup C_1 \subsetneqq A \cup C$. The algorithm now returns the cut-partition $(A_1,B \cup B_1,C_1)$ and stops.

It is clear that the running time of the algorithm is $O(n^5)$. To show that the algorithm is correct, we must show that $(A_1,B \cup B_1,C_1)$ is a good cut-partition of $G$. If $G_A = G[A \cup C]$, or if the good cut-partition $(A_1,B_1,C_1)$ of $G_A$ is of type clique, then it is clear that $(A_1,B \cup B_1,C_1)$ is a good cut-partition of $G$, and furthermore, the good cut-partition $(A_1,B \cup B_1,C_1)$ of $G$ is of the same type (type clique or type stable) as the good cut-partition $(A_1,B_1,C_1)$ of $G_A$. So assume that $G_A \neq G[A \cup C]$ and that the good cut-partition $(A_1,B_1,C_1)$ of $G_A$ is of type stable. We now claim that $(A_1,B \cup B_1,C_1)$ is a good cut-partition of $G$ of type stable.

Since $G_A \neq G[A \cup C]$, we deduce from the construction of $G_A$ that $(A,B,C)$ is a good cut-partition of $G$ of type stable, and furthermore, that the two vertices of $C$ (call them $c$ and $c'$) are strongly anti-adjacent in $G$ and semi-adjacent in $G_A$. Since $(A_1,B_1,C_1)$ is a good cut-partition of $G_A$ of type stable, we know that $C_1$ is a stable set of $G_A$ (and consequently, a stable set of $G$) of size two; set 
$C_1 = \{c_1,c_1'\}$. Furthermore, the specifications of the algorithm from Proposition~\ref{alg-find-good-cut-part} guarantee that $G_A$ does not admit a good cut-partition of type clique (for otherwise, the algorithm from Proposition~\ref{alg-find-good-cut-part} would have returned such a cut-partition), and consequently, $G_A$ is connected and contains no cut-vertices. We also note that since $C \subseteq B_1 \cup C_1$, we have either that $G_A[A_1 \cup C_1] = G[A_1 \cup C_1]$, or that $C = C_1$ and $G_A[A_1 \cup C_1]$ is obtained from 
$G[A_1 \cup C_1]$ by turning the strongly anti-adjacent pair $cc' = c_1c_1'$ into a semi-adjacent pair.

Now, to show that $(A_1,B \cup B_1,C_1)$ is a good cut-partition of $G$ of type stable, we need only show that each of $G[A_1 \cup C_1]$ and $G[B \cup B_1 \cup C_1]$ contains a narrow path between $c_1$ and $c_1'$. Let us first show that $G[A_1 \cup C_1]$ contains a narrow path between $c_1$ and $c_1'$. Let $A_1'$ be the vertex set of some component of 
$G_A[A_1] = G[A_1]$. 
Since $G_A$ contains no cut-vertices, we know that each of $c_1$ and $c_1'$ has a neighbor in $A_1'$ in $G_A$; consequently, each of $c_1$ and $c_1'$ has a neighbor in $A_1'$ in $G$. Thus, $G[A_1' \cup \{c_1,c_1'\}]$ is connected, and it follows that $G[A_1' \cup \{c_1,c_1'\}]$ (and consequently $G[A_1 \cup C_1]$ as well) contains a narrow path between $c_1$ and $c_1'$.

It remains to show that $G[B \cup B_1 \cup C_1]$ contains a narrow path between $c_1$ and $c_1'$. Since $(A_1,B_1,C_1)$ is a good cut-partition of $G_A$ of type stable, we know that there is a narrow path $P$ between $c_1$ and $c_1'$ in $G_A[B_1 \cup C_1]$. Further, since $(A,B,C)$ is a good cut-partition of $G$ of type stable, we know that there is a narrow path $Q$ between $c$ and $c'$ in $G[B \cup C]$. Now, we know that $G_A[B_1 \cup C_1]$ is the trigraph obtained from $G[B_1 \cup C_1]$ by turning the strongly anti-adjacent pair $cc'$ into a semi-adjacent pair. Thus, if $P$ contains at most one of $c$ and $c'$, then the narrow path $P$ between $c$ and $c'$ is an induced subtrigraph of $G[B \cup B_1 \cup C_1]$, and if $P$ contains both $c$ and $c'$, then $G[V(P) \cup V(Q)]$ is a narrow path in $G[B \cup B_1 \cup C_1]$ between $c_1$ and $c_1'$ (essentially, $G[V(P) \cup V(Q)]$ is the narrow path obtained from $P$ by replacing the semi-adjacent pair $cc'$ by the narrow path $Q$). This completes the argument.
\end{proof}

\begin{lemma} \label{alg-good-cut-part-min} There exists an algorithm with the following specifications:
\begin{itemize}
\item Input: a trigraph $G$;
\item Output: exactly one of the following:
\begin{itemize}
\item the true statement ``$G$ admits no good cut-partition'';
\item a good cut-partition $(A,B,C)$ of $G$, and the true statement ``the $A$-block of $G$ with respect to $(A,B,C)$ admits no good cut-partition'';
\end{itemize}
\item Running time: $O(n^6)$, where $n = |V(G)|$.
\end{itemize}
\end{lemma}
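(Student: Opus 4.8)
The plan is to iterate the algorithm from Lemma~\ref{alg-good-cut-part-rec}, repeatedly shrinking the $A$-side of a good cut-partition until the $A$-block no longer admits a good cut-partition. First I would run the algorithm from Proposition~\ref{alg-find-good-cut-part} on $G$; this takes $O(n^5)$ time. If it reports that $G$ admits no good cut-partition, then I return that statement and stop. Otherwise it returns some good cut-partition $(A,B,C)$ of $G$, which serves as the starting point of the iteration.

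Next I would maintain a good cut-partition $(A,B,C)$ of $G$ and repeatedly apply the algorithm from Lemma~\ref{alg-good-cut-part-rec} to the pair consisting of $G$ and $(A,B,C)$. If that algorithm reports that the $A$-block of $G$ with respect to $(A,B,C)$ admits no good cut-partition, then I return $(A,B,C)$ together with this statement and stop; by the specifications of Lemma~\ref{alg-good-cut-part-rec}, this output is correct. Otherwise, the algorithm returns a good cut-partition $(A',B',C')$ of $G$ with $A' \cup C' \subsetneqq A \cup C$; in this case I replace $(A,B,C)$ by $(A',B',C')$ and repeat. Each pass preserves the invariant that $(A,B,C)$ is a genuine good cut-partition of $G$, since Lemma~\ref{alg-good-cut-part-rec} is guaranteed to output one, so every subsequent call to Lemma~\ref{alg-good-cut-part-rec} receives a valid input.

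The correctness then follows immediately from the specifications of the two subroutines: whenever the algorithm halts, it either certifies that $G$ has no good cut-partition, or it produces a good cut-partition of $G$ whose $A$-block has no good cut-partition. The point that needs care is the running-time analysis, and this is where the strict inclusion $A' \cup C' \subsetneqq A \cup C$ is essential. Because $|A \cup C|$ is a non-negative integer bounded above by $n$ that strictly decreases with each successful call to the algorithm from Lemma~\ref{alg-good-cut-part-rec}, the loop can execute at most $n$ times. Each iteration costs $O(n^5)$ (a single call to Lemma~\ref{alg-good-cut-part-rec}), and the initial call to Proposition~\ref{alg-find-good-cut-part} also costs $O(n^5)$, so the total running time is $O(n^5) + O(n \cdot n^5) = O(n^6)$, as required. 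The main obstacle is thus not conceptual but a matter of bookkeeping: one must verify that the measure $|A \cup C|$ indeed drops at every step and that no iteration exceeds the $O(n^5)$ budget, so that the number of iterations times the per-iteration cost stays within $O(n^6)$.
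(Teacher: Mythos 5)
Your proposal is correct and follows essentially the same route as the paper's proof: an initial call to Proposition~\ref{alg-find-good-cut-part}, followed by repeated calls to Lemma~\ref{alg-good-cut-part-rec}, with termination guaranteed by the strict decrease of $|A \cup C|$, yielding at most $n$ iterations of cost $O(n^5)$ each and hence an $O(n^6)$ total running time.
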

\begin{proof}
\textbf{Step 1.} We first call the algorithm from Proposition~\ref{alg-find-good-cut-part} with input $G$; the running time of that algorithm is $O(n^5)$. If the algorithm from Proposition~\ref{alg-find-good-cut-part} returns the answer that $G$ admits no good cut-partition, then we are done. So assume that the algorithm from Proposition~\ref{alg-find-good-cut-part} returned a good cut-partition $(A,B,C)$ of $G$. We now go to Step 2.

\textbf{Step 2.} We call the algorithm from Lemma~\ref{alg-good-cut-part-rec} with input $G$ and $(A,B,C)$. If the algorithm from Lemma~\ref{alg-good-cut-part-rec} returns the answer that the $A$-block of $G$ with respect to $(A,B,C)$ does not admit a good cut-partition, then we are done. So assume that the algorithm from Lemma~\ref{alg-good-cut-part-rec} returned a good cut-partition $(A',B',C')$ of $G$ such that $A' \cup C' \subsetneqq A \cup C$. We now set $(A,B,C) := (A',B',C')$, and we go back to Step 2.

Since the size of $A \cup C$ decreases after each call of Step 2, we make at most $n$ 
recursive calls to Step 2 (and in particular, the algorithm terminates). Since the running time of the algorithm from Lemma~\ref{alg-good-cut-part-rec} is $O(n^5)$, we conclude that the running time of our algorithm is $O(n^6)$.
\end{proof}

\begin{theorem} \label{alg-extreme-decomp} There exists an algorithm with the following specifications:
\begin{itemize}
\item Input: an \{ISK4,wheel\}-free trigraph $G$;
\item Output: exactly one of the following:
\begin{itemize}
\item the true statement ``$G$ is a basic trigraph'';
\item a good cut-partition $(A,B,C)$ of $G$, and the true statement ``the $A$-block of $G$ with respect to $(A,B,C)$ is a basic trigraph'';
\end{itemize}
\item Running time: $O(n^6)$, where $n = |V(G)|$.
\end{itemize}
\end{theorem}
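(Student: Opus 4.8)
The plan is to run the algorithm from Lemma~\ref{alg-good-cut-part-min} on $G$ essentially as a black box, and then read off the required output by interpreting its answer through Corollary~\ref{cor-decomp}. That algorithm runs in $O(n^6)$ time and returns exactly one of two things: either the (true) statement that $G$ admits no good cut-partition, or a good cut-partition $(A,B,C)$ of $G$ together with the (true) statement that the $A$-block $G_A$ of $G$ with respect to $(A,B,C)$ admits no good cut-partition. I would dispatch on these two cases.

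In the first case, $G$ is an \{ISK4,wheel\}-free trigraph that admits no good cut-partition, so Corollary~\ref{cor-decomp} forces $G$ to be a basic trigraph; the algorithm then outputs the statement ``$G$ is a basic trigraph'' and stops. In the second case, the returned good cut-partition $(A,B,C)$ gives an $A$-block $G_A$ that admits no good cut-partition. To apply Corollary~\ref{cor-decomp} to $G_A$, I first observe that $G_A$ is itself \{ISK4,wheel\}-free: this is exactly the content of Proposition~\ref{blocks-free}. Hence Corollary~\ref{cor-decomp}, now applied to $G_A$, forces $G_A$ to be basic, and the algorithm outputs $(A,B,C)$ together with the statement that the $A$-block is a basic trigraph.

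Correctness is then immediate from the two case analyses above: both emitted statements are true, and exactly one branch is taken, so the output always matches one of the two permitted forms in the specification. For the running time, note that the only substantive computation the algorithm performs is the single call to the algorithm from Lemma~\ref{alg-good-cut-part-min}; the subsequent case dispatch takes constant time once its output is available. Thus the total running time is $O(n^6)$, as required.

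I do not expect a genuine obstacle at this stage, since the heavy lifting has already been carried out in the preceding lemmas. The one point that must not be overlooked is that, in the second case, Corollary~\ref{cor-decomp} can be invoked on $G_A$ only after one knows that $G_A$ inherits \{ISK4,wheel\}-freeness from $G$; this is precisely the reason Proposition~\ref{blocks-free} was established beforehand. Everything else is a routine assembly of Lemma~\ref{alg-good-cut-part-min}, Proposition~\ref{blocks-free}, and Corollary~\ref{cor-decomp}, so no new difficulty arises here.
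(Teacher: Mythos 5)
Your proposal is correct and follows essentially the same route as the paper: a single call to the algorithm of Lemma~\ref{alg-good-cut-part-min}, with correctness in the two cases supplied by Corollary~\ref{cor-decomp} and Proposition~\ref{blocks-free}. The paper's own proof is exactly this assembly, including the observation that Proposition~\ref{blocks-free} is what licenses applying Corollary~\ref{cor-decomp} to the $A$-block.
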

\begin{proof}
We call the algorithm from Lemma~\ref{alg-good-cut-part-min} with input $G$. If that algorithm returns the answer that $G$ admits no good cut-partition, then our algorithm returns the answer that $G$ is a basic trigraph and stops. On the other hand, if the algorithm from Lemma~\ref{alg-good-cut-part-min} returns a good cut-partition $(A,B,C)$ of $G$ and the statement that the $A$-block of $G$ with respect to $(A,B,C)$ admits no good cut-partition, then our algorithm stops and returns the good cut-partition $(A,B,C)$ of $G$ and the statement that the $A$-block of $G$ with respect to $(A,B,C)$ is a basic trigraph.

Since the running time of the algorithm from Lemma~\ref{alg-good-cut-part-min} is $O(n^6)$, the running time of our algorithm is also $O(n^6)$. The correctness of our algorithm follows immediately from Corollary~\ref{cor-decomp} and Proposition~\ref{blocks-free}.
\end{proof}

The following ``extreme decomposition theorem'' for \{ISK4,wheel\}-free trigraphs is an immediate corollary of Theorem~\ref{alg-extreme-decomp}.

\begin{corollary} \label{cor-extreme-decomp} Let $G$ be an \{ISK4,wheel\}-free trigraph. Then either $G$ is a basic trigraph, or $G$ admits a good cut-partition $(A,B,C)$ such that the $A$-block of $G$ with respect to $(A,B,C)$ is a basic trigraph.
\end{corollary}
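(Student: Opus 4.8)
The plan is to obtain the statement as a direct consequence of Theorem~\ref{alg-extreme-decomp}, exactly as the surrounding text announces. First I would note that $G$, being an \{ISK4,wheel\}-free trigraph, is a legitimate input to the algorithm guaranteed by Theorem~\ref{alg-extreme-decomp}. By the output specifications of that algorithm, running it on $G$ produces exactly one of two outcomes, and in each case the algorithm also certifies a \emph{true} statement. In the first outcome the certified statement is ``$G$ is a basic trigraph,'' which is precisely the first disjunct of the corollary. In the second outcome the algorithm returns a good cut-partition $(A,B,C)$ of $G$ together with the true statement ``the $A$-block of $G$ with respect to $(A,B,C)$ is a basic trigraph,'' which is precisely the second disjunct. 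Since at least one outcome occurs, the desired disjunction holds for $G$, and the corollary follows. There is essentially no obstacle here: the entire content has already been established (algorithmically) in Theorem~\ref{alg-extreme-decomp}, and the corollary merely discards the algorithmic wrapper and keeps the existential structural conclusion.

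If instead one wanted a self-contained structural argument that does not route through the algorithmic statement, I would run the following extremal variant. If $G$ is basic we are done, so assume it is not; then by Corollary~\ref{cor-decomp} the trigraph $G$ admits a good cut-partition, and among all good cut-partitions $(A,B,C)$ of $G$ I would choose one minimizing $|A \cup C|$. Let $G_A$ be the corresponding $A$-block. By Proposition~\ref{blocks-free}, $G_A$ is again \{ISK4,wheel\}-free, so Corollary~\ref{cor-decomp} applies to it. If $G_A$ were not basic, it would admit a good cut-partition, and the pull-back construction developed inside the proof of Lemma~\ref{alg-good-cut-part-rec} would convert that into a good cut-partition $(A',B',C')$ of $G$ with $A' \cup C' \subsetneqq A \cup C$, contradicting the minimality of $|A \cup C|$. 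Hence $G_A$ is basic, giving the second disjunct. The only delicate point in this alternative route is re-using the pull-back argument from Lemma~\ref{alg-good-cut-part-rec}: one must check that a good cut-partition of the block $G_A$ genuinely induces a good cut-partition of $G$ (in particular that the type-stable narrow-path condition is preserved, possibly by substituting the path $Q$ through $B \cup C$ for a semi-adjacent pair of $C$), but this is exactly the verification already carried out there.

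For the write-up I would present only the first, one-line argument, since the excerpt explicitly flags the result as ``an immediate corollary of Theorem~\ref{alg-extreme-decomp}''; the extremal argument is worth mentioning as the conceptual reason the algorithm is correct, but it adds no new difficulty beyond Lemma~\ref{alg-good-cut-part-rec} and Corollary~\ref{cor-decomp}.
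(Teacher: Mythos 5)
Your primary argument is exactly the paper's: the corollary is stated there as an immediate consequence of Theorem~\ref{alg-extreme-decomp}, with no further proof given, and reading off the two certified output statements of that algorithm is precisely what is intended. The extremal variant you sketch is also sound (and is essentially the content of Lemma~\ref{alg-good-cut-part-rec} together with Corollary~\ref{cor-decomp} and Proposition~\ref{blocks-free}), but it is not needed for the write-up.
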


\section{A stability preserving transformation}
\label{s:op}

We now describe a transformation on a weighted trigraph that preserves the stability number, while decreasing the number
of semi-adjacent pairs. It is based on the {\it gem}, a graph $G$ with five vertices such that one of them, say $v$, is adjacent to all the others and $G \smallsetminus v$ is isomorphic to the four-vertex path $P_4$.

\begin{sloppypar}
Let $(G,w)$ be a weighted trigraph and let $uv$ be a semi-adjacent pair in $G$.
The weighted trigraph obtained from $(G,w)$ by {\it replacing $uv$ with a gem} is the weighted
trigraph $(G',w')$ defined as follows:
\begin{itemize}
 \item The vertex set is $V(G') = V(G)\cup \{x_{uv},x_{v,u},x_{u,v}\}$, where $x_{uv},x_{v,u},x_{u,v}$ are pairwise distinct and do not belong to $V(G)$.
 \item The adjacency function is $\theta_{G'}:{V(G')\choose 2}\to \{-1,0,1\}$, defined as follows:
 \begin{itemize}
  \item $\theta_{G'}\upharpoonright ({V(G)\choose 2}\smallsetminus\{uv\})=
 \theta_{G}\upharpoonright ({V(G)\choose 2}\smallsetminus\{uv\})\,$
  \item $\theta_{G'}(e) = 1$ for all $e\in \{ux_{v,u},\, x_{v,u}x_{u,v},\, x_{u,v}v,\, x_{uv}u,\, x_{uv}x_{v,u},\,$ $x_{uv}x_{u,v},\, x_{uv}v\}$, 
 \item  $\theta_{G'}(e) = -1$ for all other $e\in {V(G')\choose 2}$.
 \end{itemize}
 (In particular, $G'[u,x_{v,u},x_{u,v},v,x_{uv}]$ is a graph isomorphic to a gem.)
  \item The weight function $w':D(G')\to\mathbb{N}$
  is defined as follows:
\begin{itemize}
 \item $w'\upharpoonright (D(G)\smallsetminus \{uv,(v,u),(u,v)\})=
 w\upharpoonright (D(G)\smallsetminus \{uv,(v,u),(u,v)\})$,
 \item $w'(x_{uv}) = w(uv)$,
 $w'(x_{v,u}) = w(v,u)$,
 $w'(x_{u,v}) = w(u,v)$, and
 \item  $w'(p) = 0$ for all other $p\in D(G')\,.$
\end{itemize}
\end{itemize}
\end{sloppypar}

It is immediate to see that $(G',w')$ is indeed a weighted trigraph, that is, that $w'$ is a weight function of $G'$.
The importance of the above transformation stems from the fact that it preserves the stability number, a fact
which we now prove.

\begin{proposition}\label{prop:gem}
Let $uv$ be a semi-adjacent pair in a weighted trigraph $(G,w)$ and let $(G',w')$
be the weighted trigraph obtained from $(G,w)$ by replacing $uv$ with a gem.
Then, $\alpha(G',w') = \alpha(G,w)$.
\end{proposition}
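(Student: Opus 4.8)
The plan is to set up a tight correspondence between the stable sets of $(G,w)$ and those of $(G',w')$, and to prove the single identity
\[
\max\{\llbracket S'\rrbracket_{(G',w')} : S'\text{ stable in }G',\ S'\cap V(G)=S\}=\llbracket S\rrbracket_{(G,w)}
\]
for every stable set $S$ of $G$. Both inequalities then follow at once: the left-hand side lets me push any optimal stable set $S'$ of $(G',w')$ down to a stable set $S$ of $G$ of weight at least $\llbracket S'\rrbracket_{(G',w')}$, giving $\alpha(G',w')\le\alpha(G,w)$, and lift an optimal $S$ of $(G,w)$ up to some $S'$ of equal weight, giving $\alpha(G',w')\ge\alpha(G,w)$.

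First I would record the combinatorial facts about $G'$. Writing $N=\{x_{uv},x_{v,u},x_{u,v}\}$, the three new vertices are pairwise strongly adjacent, so any stable set of $G'$ meets $N$ in at most one vertex. Their only neighbours in $V(G)$ are: $x_{v,u}$ is adjacent only to $u$; $x_{u,v}$ is adjacent only to $v$; and $x_{uv}$ is adjacent to both $u$ and $v$. Moreover the pair $uv$, which was semi-adjacent in $G$, is strongly anti-adjacent in $G'$. Hence for every stable set $S'$ of $G'$ the set $S:=S'\cap V(G)$ is a stable set of $G$ (the only changed adjacency inside $V(G)$ is $uv$, which is anti-adjacent in both trigraphs, so $u,v$ may coexist in a stable set of either), and conversely any stable set $S$ of $G$ extends to a stable set of $G'$ by at most one vertex of $N$, subject to the adjacency constraints just listed.

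The crux is one weight computation. Since $w'$ vanishes on every pair involving a vertex of $N$ and on the (now strongly anti-adjacent) pair $uv$, and agrees with $w$ on everything else, I would expand $\llbracket S'\rrbracket_{(G',w')}$ straight from the definition and collect terms to get
\[
\llbracket S'\rrbracket_{(G',w')}=\llbracket S\rrbracket_{(G,w)}-c(S)+\sum_{x\in S'\cap N}w'(x),
\]
where $c(S)$ is the contribution of the pair $uv$ to $\llbracket S\rrbracket_{(G,w)}$, namely $c(S)=0$ if $u,v\in S$, $c(S)=w(u,v)$ if $u\in S\not\ni v$, $c(S)=w(v,u)$ if $v\in S\not\ni u$, and $c(S)=w(uv)$ if $u,v\notin S$. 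The only delicate point, and the main obstacle, is this bookkeeping: because $\llbracket\cdot\rrbracket$ is not monotone and carries the three pair-weights $w(uv),w(u,v),w(v,u)$, one must verify term by term that the directed contributions $w(u,v),w(v,u)$ and the undirected contribution $w(uv)$ present in $(G,w)$ are precisely the quantities that disappear on passing to $(G',w')$.

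Finally I would optimise over the new vertex, using the facts above together with the weight-function inequality $w(uv)\ge\max\{w(u,v),w(v,u)\}$. In each of the four cases for $S\cap\{u,v\}$ the admissible new vertex of largest weight has weight exactly $c(S)$: if $u,v\in S$ no vertex of $N$ is admissible and $c(S)=0$; if exactly one of $u,v$ lies in $S$ the unique admissible new vertex is $x_{u,v}$, resp.\ $x_{v,u}$, of weight $w(u,v)=c(S)$, resp.\ $w(v,u)=c(S)$; and if $u,v\notin S$ all three vertices of $N$ are admissible, so the best choice is $x_{uv}$ of weight $w(uv)=c(S)$ by the weight-function inequality. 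Thus $\sum_{x\in S'\cap N}w'(x)\le c(S)$ always, with equality attained by the choice just described, so the displayed maximum over extensions equals $\llbracket S\rrbracket_{(G,w)}$, completing the argument.
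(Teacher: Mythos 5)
Your proposal is correct and rests on the same core computation as the paper's proof: the case analysis on $S\cap\{u,v\}$, the matching of each case to the admissible gem vertex of equal weight, and the use of $w(uv)\ge\max\{w(u,v),w(v,u)\}$ to argue that $x_{uv}$ is the best choice when $u,v\notin S$ (which is exactly the paper's reduction of its Case 2 to Case 1). The only difference is organizational — you package the seven cases of the paper's two separate inequalities into a single identity $\llbracket S'\rrbracket_{(G',w')}=\llbracket S\rrbracket_{(G,w)}-c(S)+\sum_{x\in S'\cap N}w'(x)$ followed by a one-line optimization, which is a cleaner presentation of the same argument.
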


\begin{proof}
Let $x_{uv},x_{v,u},x_{u,v}$ be the three vertices in $V(G')\smallsetminus V(G)$ labeled as in the definition of
the operation of replacing a semi-adjacent pair with a gem.

We split the proof of the equality $\alpha(G',w') = \alpha(G,w)$ into two parts.
First, we show that $\alpha(G,w) \leq \alpha(G',w')$.
Let $S\in V(G)$ be a stable set of $G$ such that
$\llbracket S\rrbracket_{(G,w)} = \alpha(G,w)$.
We consider three cases depending on the number of vertices in $S\cap \{u,v\}$.
In each case, we exhibit a stable set $S'$ of $G'$ such that $\llbracket S' \rrbracket_{(G',w')}
= \llbracket S \rrbracket_{(G,w)}$. This is enough, for then we obtain that
$\alpha(G,w) = \llbracket S\rrbracket_{(G,w)} = \llbracket S'\rrbracket_{(G',w')} \leq \alpha(G',w')$,
which is what we need.

If $|S\cap \{u,v\}| = 0$, then the set $S' = S\cup \{x_{uv}\}$ is a stable set of $G'$.
Its weight with respect to $(G',w')$ is
\begin{displaymath}
\begin{array}{rcl}
\llbracket S'\rrbracket_{(G',w')} &=& \sum\limits_{x\in S'}w'(x)+\sum\limits_{x\in S'}\sum\limits_{y\in V(G)\smallsetminus S'}w'(x,y)+\sum\limits_{xy\in {V(G')\smallsetminus S'\choose 2}}w'(xy)
\\
\\
&=& \left(\sum\limits_{x\in S}w(x)+w'(x_{uv})\right)+\sum\limits_{x\in S}\sum\limits_{y\in V(G)\smallsetminus S}w(x,y) +
\\
&& + \left(\sum\limits_{xy\in {V(G)\smallsetminus S\choose 2}}w(xy) -w(uv)\right)
\\
\\
&=& \sum\limits_{x\in S}w(x)+\sum\limits_{x\in S}\sum\limits_{y\in V(G)\smallsetminus S}w(x,y)+\sum\limits_{xy\in {V(G)\smallsetminus S\choose 2}}w(xy)
\\
\\
&=& \llbracket S\rrbracket_{(G,w)}.
\end{array}
\end{displaymath}

If $|S\cap \{u,v\}| = 1$, then we may assume without loss of generality that $S\cap \{u,v\} = \{u\}$.
The set $S' = S\cup \{x_{u,v}\}$ is a stable set of $G'$.
Its weight with respect to $(G',w')$ is
\begin{displaymath}
\begin{array}{rcl}
\llbracket S'\rrbracket_{(G',w')} & = & \sum\limits_{x\in S'}w'(x)+\sum\limits_{x\in S'}\sum\limits_{y\in V(G)\smallsetminus S'}w'(x,y)+\sum\limits_{xy\in {V(G')\smallsetminus S'\choose 2}}w'(xy)
\\
\\
&=& \left(\sum\limits_{x\in S}w(x)+w'(x_{u,v})\right)+ 
\\
& & +\left(\sum\limits_{x\in S}\sum\limits_{y\in V(G)\smallsetminus S}w(x,y)-w(u,v)\right)+\sum\limits_{xy\in {V(G)\smallsetminus S\choose 2}}w(xy)
\\
\\
&=& \sum\limits_{x\in S}w(x)+\sum\limits_{x\in S}\sum\limits_{y\in V(G)\smallsetminus S}w(x,y)+\sum\limits_{xy\in {V(G)\smallsetminus S\choose 2}}w(xy)
\\
\\
&=&
\llbracket S\rrbracket_{(G,w)}.
\end{array}
\end{displaymath}

Finally, suppose that $|S\cap \{u,v\}| = 2$, that is, $\{u,v\}\subseteq S$.
In this case, $S'=S$ itself is a stable set of $G'$.
It is immediate to verify that its weight with respect to $(G',w')$ is the same as its weight with respect to $(G,w)$.

We now prove the reverse inequality, that is, we show that $\alpha(G',w') \leq \alpha(G,w)$. Let $S' \subseteq V(G')$ be a stable set of $G'$ such that $\llbracket S'\rrbracket_{(G',w')} = \alpha(G',w')$.

Up to symmetry, it suffices to analyze four cases depending on the intersection of $S'$ with the vertex set of the gem, that is, depending on the 
set $X = S'\cap \{u,v,x_{uv},x_{v,u},x_{u,v}\}$. These four cases are:
$$X\in \{\{x_{uv}\}, \{x_{v,u}\},\{u,x_{u,v}\}, \{u,v\}\}\,.$$
Indeed, if $X = \emptyset$, then we can replace $S'$ with $S'\cup\{x_{uv}\}$ to obtain a set with
$\llbracket S'\cup\{x_{uv}\}\rrbracket_{(G',w')} \ge \llbracket S'\rrbracket_{(G',w')}$.
If $X = \{u\}$, then we can replace $S'$ with $S'\cup\{x_{u,v}\}$ to obtain a set with
$\llbracket S'\cup\{x_{u,v}\}\rrbracket_{(G',w')} \ge \llbracket S'\rrbracket_{(G',w')}$.
Each of the remaining cases for $X$ either results in a non-stable set, or is symmetric to one of the four cases above. In each case, we
exhibit a stable set $S$ of $G$ such that $\llbracket S\rrbracket_{(G,w)} = \llbracket S'\rrbracket_{(G',w')}$. This is enough because we
then obtain $\alpha(G',w') = \llbracket S'\rrbracket_{(G',w')} = \llbracket S\rrbracket_{(G,w)} \leq \alpha(G,w)$, which is what we need.

\medskip
{\bf Case 1.} $X = \{x_{uv}\}$.

The set $S = S'\smallsetminus\{x_{uv}\}$ is a stable set of $G$.
Its weight with respect to $(G,w)$ is
\begin{displaymath}
\begin{array}{rcl}
\llbracket S\rrbracket_{(G,w)} &=& \sum\limits_{x\in S}w(x)+\sum\limits_{x\in S}\sum\limits_{y\in V(G)\smallsetminus S}w(x,y)+\sum\limits_{xy\in {V(G)\smallsetminus S\choose 2}}w(xy)
\\
\\
&=& \left(\sum\limits_{x\in S'}w'(x)-w'(x_{uv})\right)+\sum\limits_{x\in S'}\sum\limits_{y\in V(G')\smallsetminus S'}w'(x,y) +\\
&& + \left(\sum\limits_{xy\in {V(G')\smallsetminus S'\choose 2}}w'(xy)+w(uv)\right)
\\
\\
&=& \sum\limits_{x\in S'}w'(x)+\sum\limits_{x\in S'}\sum\limits_{y\in V(G')\smallsetminus S'}w'(x,y)+\sum\limits_{xy\in {V(G')\smallsetminus S'\choose 2}}w'(xy)
\\
\\
&=&
\llbracket S'\rrbracket_{(G',w')}.
\end{array}
\end{displaymath}

\medskip
{\bf Case 2.} $X = \{x_{v,u}\}$.

In this case, the set $S'' = (S'\smallsetminus \{x_{v,u}\})\cup\{x_{uv}\}$ is also a stable set of $G'$.
Since $w(uv)\ge w(v,u)$, and since neither $x_{uv}$ nor $x_{v,u}$ is an endpoint of a semi-adjacent pair of $G'$, we have that 
\begin{displaymath}
\begin{array}{rcl}
\llbracket S''\rrbracket_{(G',w')}&=&\llbracket S'\rrbracket_{(G',w')}+ w'(x_{uv})-w'(x_{v,u})
\\
\\
&=& \llbracket S'\rrbracket_{(G',w')}+ w(uv)-w(v,u)
\\
\\
&\ge& \llbracket S'\rrbracket_{(G',w')}.
\end{array}
\end{displaymath}
This implies that
$\alpha(G',w') = \llbracket S'\rrbracket_{(G',w')} \le
\llbracket S''\rrbracket_{(G',w')}\le \alpha(G',w')$, and consequently
$\llbracket S''\rrbracket_{(G',w')}= \alpha(G',w')$. Therefore, this case reduces to Case 1.

\medskip
{\bf Case 3.} $X = \{u,x_{u,v}\}$.

The set $S = S'\smallsetminus\{x_{u,v}\}$ is a stable set of $G$.
Its weight with respect to $(G,w)$ is
\begin{displaymath}
\begin{array}{rcl}
\llbracket S\rrbracket_{(G,w)}& = & \sum\limits_{x\in S}w(x)+\sum\limits_{x\in S}\sum\limits_{y\in V(G)\smallsetminus S}w(x,y) + \sum\limits_{xy\in {V(G)\smallsetminus S\choose 2}}w(xy)
\\
\\
& =& \left(\sum\limits_{x\in S'}w'(x)-w'(x_{u,v})\right)+ 
\\
& & +\left(\sum\limits_{x\in S'}\sum\limits_{y\in V(G')\smallsetminus S'}w'(x,y)+w(u,v)\right)+\sum\limits_{xy\in {V(G')\smallsetminus S'\choose 2}}w'(xy)
\\
\\
& = & \sum\limits_{x\in S'}w'(x)+\sum\limits_{x\in S'}\sum\limits_{y\in V(G')\smallsetminus S'}w'(x,y) + \sum\limits_{xy\in {V(G')\smallsetminus S'\choose 2}}w'(xy)
\\
\\
& = & \llbracket S'\rrbracket_{(G',w')}. 
\end{array}
\end{displaymath}

\medskip
{\bf Case 4.} $X = \{u,v\}$.

The set $S = S'$ itself is a stable set of $G$. It is immediate to verify that its weight with respect to $(G,w)$ is the same as its weight with respect to $(G',w')$.

This completes the argument.
\end{proof}

\section{Computing the stability number of basic weighted trigraphs}
\label{s:comp}

We remind the reader that a {\em basic trigraph} is a trigraph $G$ that is either a series-parallel trigraph, a complete bipartite trigraph, or a line trigraph.

\begin{theorem}\label{thm:alpha-basic}
There exists an algorithm with the following specifications:
\begin{itemize}
 \item Input: a weighted basic trigraph $(G,w)$;
 \item Output: $\alpha(G,w)$;
 \item Running time: $O(n^4\log n)$ where $n = |V(G)|$.
\end{itemize}
\end{theorem}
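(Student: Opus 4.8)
The task is to compute $\alpha(G,w)$ for a weighted basic trigraph $(G,w)$, where $G$ is one of three types (series-parallel, complete bipartite, or line trigraph), within the stated time bound. The natural approach is to first eliminate the semi-adjacent pairs using the gem-replacement transformation of Proposition~\ref{prop:gem}, which preserves the stability number while turning the trigraph into a graph, and then to invoke known polynomial-time stable-set algorithms for each of the three resulting graph classes. Concretely, I would proceed in two phases: a \emph{reduction phase} that converts $(G,w)$ into an equivalent weighted \emph{graph} (no semi-adjacent pairs), and a \emph{computation phase} that solves the problem on that graph according to its type.

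For the reduction phase, I would apply Proposition~\ref{prop:gem} once for each semi-adjacent pair of $G$. Since $G$ has at most $\binom{n}{2} = O(n^2)$ semi-adjacent pairs, and each gem replacement adds three new vertices, the resulting weighted graph $(G',w')$ has $O(n^2)$ vertices and satisfies $\alpha(G',w') = \alpha(G,w)$. The key point to verify here is that $(G',w')$ is still basic of the same type as $(G,w)$: since a gem is series-parallel, the class of series-parallel graphs is preserved; for complete bipartite trigraphs there are no semi-adjacent pairs to begin with (as noted in Section~\ref{s:trigraphs}), so no reduction is needed; and one must check that attaching gems to a line trigraph yields a graph that is again a line graph (of a suitably modified host graph), which follows from the fact that the gem itself is the line graph of a small graph and the adjacencies created by the transformation are consistent with an edge-expansion of the host. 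Because $(G',w')$ is now a genuine weighted graph, its stable-set weight is simply the sum of vertex weights, so the complicated weight function $\llbracket \cdot \rrbracket$ collapses to the ordinary notion.

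For the computation phase, I would treat the three classes separately. Series-parallel graphs have bounded treewidth (treewidth at most two), so the maximum weight stable set can be computed in linear time by dynamic programming over a tree decomposition. Complete bipartite graphs are trivial: the maximum weight stable set is just the heavier of the two sides, computable in $O(n)$ time. The line-graph case is the substantive one: a maximum weight stable set in $L(H)$ corresponds exactly to a maximum weight matching in the host graph $H$, so I would recover $H$ (via Whitney's theorem / a line-graph recognition algorithm) and run a maximum weight matching algorithm. This is where the running time is determined: on a graph with $N = O(n^2)$ vertices, a weighted matching algorithm running in roughly $O(N^2 \log N)$ time gives $O(n^4 \log n)$, matching the stated bound.

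The main obstacle I anticipate is controlling the blow-up from the gem reduction while staying within the $O(n^4 \log n)$ budget, and in particular ensuring the line-graph structure survives the transformation. One must argue carefully that after replacing every semi-adjacent pair with a gem, a line trigraph becomes the line graph of an explicitly constructible host graph $H'$ on $O(n^2)$ edges, so that the matching reduction is valid; the weights on the gem vertices must be placed on the corresponding edges of $H'$. A secondary subtlety is that the number of semi-adjacent pairs could, in a line trigraph, be as large as $\Theta(n^2)$, so the dominant cost genuinely is the weighted matching on the enlarged instance rather than the reduction itself. Once these structural points are settled, the time analysis is routine: the reduction is $O(n^2)$ per pair for $O(n^2)$ pairs (dominated by the matching step), and each of the three class-specific subroutines fits within $O(n^4 \log n)$.
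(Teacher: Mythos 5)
Your overall strategy is the same as the paper's: replace every semi-adjacent pair with a gem (Proposition~\ref{prop:gem}), observe that the resulting weighted graph has $O(n^2)$ vertices and the same stability number, and then handle the three basic classes separately, with the weighted-matching step in the line-graph case dominating the running time at $O(n^4\log n)$. However, one step of your argument is false as stated. You claim that ``since a gem is series-parallel, the class of series-parallel graphs is preserved'' by the gem replacement, and you then rely on treewidth at most two. This is not so: take $G$ to be the trigraph on $\{u,v,a\}$ with $uv$ semi-adjacent and $ua,va$ strongly adjacent; its full realization is $K_3$, hence series-parallel, but after replacing $uv$ with a gem the branch sets $\{x_{uv}\}$, $\{x_{v,u}\}$, $\{x_{u,v}\}$, $\{u,a,v\}$ yield a $K_4$ minor, so the new graph is not series-parallel (gluing two series-parallel graphs along two vertices does not in general preserve series-parallelness). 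The paper instead proves, via an explicit chordal completion, that each gem replacement raises the treewidth to at most $\max\{\mathrm{tw}(H),3\}$, and then runs the bounded-treewidth dynamic program with width three rather than two. Your conclusion survives with this repair, but the repair is needed.

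Second, in the line-trigraph case you correctly flag that one must show the gem-replaced trigraph is again a line graph of a constructible host, but you do not supply the one fact that makes this work: in a line trigraph all triangles are strong, so a semi-adjacent pair $uv$ lies in no triangle of the full realization, which forces the corresponding edges $ab,bc$ of the host $K$ to meet at a vertex $b$ of degree two with $ac\notin E(K)$; only then can $b$ be replaced by the five-edge gadget whose line graph is the gem. Without invoking the strong-triangle condition, the assertion that the transformation is ``consistent with an edge-expansion of the host'' has no justification. The rest of your proposal --- the complete bipartite case, the collapse of $\llbracket\cdot\rrbracket$ to ordinary vertex weights once no semi-adjacent pairs remain, and the $O(n^4\log n)$ accounting for weighted matching on a host graph with $O(n^2)$ vertices and edges --- matches the paper.
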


\begin{proof}
Let $(G,w)$ be a weighted basic trigraph. Then, $G$ is either (i) a series-parallel trigraph, (ii) a complete bipartite trigraph, or
(iii) a line trigraph.

Testing (i) can be done by computing in $O(n^2)$ time the full realization $G_f$ of $G$, and testing whether $G_f$ is series-parallel, which can be done in time $O(|V(G_f)|+|E(G_f)|) = O(n^2)$~\cite{MR652904}.

Testing (ii) can be done in time $O(n^2)$ by first testing if $G$ is a graph (that is, if its adjacency function only takes values $1$ and $-1$), and then testing in $O(n^2)$ time (for example, using breadth-first search) if $G$ is a complete bipartite graph.

Thus, it can be determined in time $O(n^2)$ whether (i), (ii), or neither of these two cases occurs. If neither (i) nor (ii) occurs, then (iii) must occur.

We now discuss how to compute the stability number of $(G,w)$ in each of the three cases.

\medskip
{\bf Case 1.} $G$ is a series-parallel trigraph.

Let $(G',w')$ be the weighted trigraph obtained from $(G,w)$ by replacing each semi-adjacent pair of $G$ (in any order) with a gem.
Since each replacement of a semi-adjacent pair with a gem removes one semi-adjacent pair and produces no new ones,
the resulting trigraph $G'$ has no semi-adjacent pairs, that is, it is a graph. Clearly, $|V(G')| = O(n^2)$.
Moreover, since $G'$ has exactly one edge for each strongly adjacent pair of $G$, exactly seven edges for each semi-adjacent pair of $G$,
and no other edges, we also have that $|E(G')| = O(n^2)$.

Since $G$ is a series-parallel trigraph, its full realization $G_f$ is a series-parallel graph.
Note that $G'$ is isomorphic to the graph $G''$ obtained from $G_f$ by replacing each edge $uv\in E(G_f)$ that forms a semi-adjacent pair in $G$
with a gem with vertex set
$\{u,x_{v,u},x_{u,v},v,x_{uv}\}$ and edge set
$$\{ux_{v,u},\,x_{v,u}x_{u,v},\,x_{u,v}v,\,x_{uv}u,\,x_{uv}x_{v,u},\,x_{uv}x_{u,v},\,x_{uv}v\}.$$

For a graph $H$, let us denote by ${\rm tw}(H)$ its treewidth.
We claim that the treewidth of $G''$ (and consequently that of $G'$) is at most three.
To this end, it suffices to prove the following.

\begin{sloppypar}
{\it Claim:
Let $H$ be a graph and let $H_1$ be a graph obtained from $H$ by replacing an edge $uv\in E(H)$ with a gem
with vertex set $\{u,x_{v,u},x_{u,v},v,x_{uv}\}$ and edge set
$\{ux_{v,u},x_{v,u}x_{u,v},x_{u,v}v,x_{uv}u,x_{uv}x_{v,u},x_{uv}x_{u,v},x_{uv}v\}$.
Then, ${\rm tw}(H_1) \le \max\{{\rm tw}(H),3\}$.}
\end{sloppypar}

\medskip
This is indeed enough. Since series-parallel graphs are of treewidth at most two~\cite{MR1686154},
 $G_f$ is of treewidth at most two. Applying the claim
 repeatedly to each of the graphs in the sequence of graphs transforming
 $G_f$ to $G''$ (by replacing one edge at a time with a gem) implies that
$\text{tw}(G') \le \max\{\text{tw}(G_f),3\} = 3$.

\medskip
{\it Proof of Claim:}
Recall that a graph $K=(V,E)$ is {\it chordal} if every cycle in it of length at least four has a chord, and that $\omega(K)$ denotes
the {\it clique number} of $K$, that is, the maximum size of a clique in $K$.
Moreover, the treewidth of $K$ equals the minimum value of $\omega(K')-1$ over all chordal graphs
of the form $K' = (V,E')$ where $E\subseteq E'$ (see, e.g.,~\cite[Theorem 11.1.4]{MR1686154}).

Let $H'$ be a chordal supergraph of $H$ such that $\text{tw}(H) = \omega(H')-1$.
Then, the graph $H_1'$ defined as $V(H_1') = V(H_1)$ and $E(H_1') = E(H')\cup E(H_1)\cup \{uv, ux_{u,v}\}$ is
a chordal supergraph of $H_1$ with $\omega(H_1') = \max\{\omega(H'),4\}$.
Therefore, $\text{tw}(H_1)\le \omega(H_1')-1 = \max\{\omega(H')-1,3\} = \max\{\text{tw}(H),3\}$.\qed

\medskip
We have shown that the treewidth of $G'$ is at most three. It follows that
the stability number of $(G',w')$, and hence that of $(G,w)$,
can be computed in time $O(|V(G')|) = O(n^2)$, e.g.,
by first computing a tree-decomposition of $G'$ of width at most three~\cite{MR1417901} and then applying
a dynamic programming algorithm along the tree decomposition~\cite{MR985145}.

\medskip
{\bf Case 2.} $G$ is a complete bipartite trigraph.

In this case, $G$ is a graph and all nonzero weights $w(p)>0$ for $p\in D(G)$ appear on its vertices.
Thus, if $(A,B)$ is a bipartition of $G$, we have that $\alpha(G,w) = \max\bigg\{\sum\limits_{a \in A}w(a),\sum\limits_{b \in B}w(b)\bigg\}$. It follows that in this case the stability number can be computed in time $O(n)$
(to compute $A$ and $B$, choose $v\in V(G)$ arbitrarily, and take $A = N(v)$ and $B = V(G)\smallsetminus A$, where $N(v)$ is the set of all neighbors of $v$ in $G$).

\medskip
{\bf Case 3.} $G$ is a line trigraph.

We apply a transformation similar to the one from Case 1. Namely, let $(G',w')$ be the weighted trigraph obtained from $(G,w)$ by replacing each semi-adjacent pair of $G$ (in any order) with a gem. Again, we have that the resulting trigraph $G'$ has no semi-adjacent pairs, that is, that $G'$ is a graph,
and that $|V(G')| = |E(G')| = O(n^2)$.

\begin{sloppypar}
{\it Claim:
Let $(H,w)$ be a weighted line trigraph, let $uv$ be a semi-adjacent pair in $H$, and let $(H',w')$ be the trigraph obtained from $(H,w)$ by replacing $uv$ with a gem. Then, $H'$ is also a line trigraph.}
\end{sloppypar}

{\it Proof of Claim:}
Suppose that $H$ is a line trigraph of a graph $K$. This means that the full realization $H_f$ of $H$ is the line graph of $K$ and all the triangles of $H$ are strong. Vertices $u$ and $v$ are adjacent in $H_f$, and so they correspond to a pair of adjacent edges, say
$ab$ and $bc$, respectively, in $K$. Since every triangle in $H$ is strong, the edge $uv\in E(H_f)$ is not part of any triangle in $H_f$.
This implies that $b$ is a vertex of degree two in $K$, and $ac\not\in E(K)$.
Let $K'$ be the graph defined as follows: $V(K') = (V(K)\smallsetminus\{b\})\cup\{d,e,f\}$ and $E(K') =
(E(K)\smallsetminus\{ab,bc\})\cup\{ad,de,ec,df,ef\}$.
Then, the line graph of $K'$ is isomorphic to the full realization of $H'$.
Moreover, all the triangles of $H'$ are strong. Therefore, $H'$ is a line trigraph.\qed

\medskip
Applying the above claim repeatedly to each of the trigraphs in the sequence of trigraphs transforming
$(G,w)$ to $(G',w')$ implies that $G'$ is a line trigraph.
Since $G'$ is in fact a graph, it is a line graph.
Since the operation of replacing a semi-adjacent pair with a gem preserves the stability number (by Proposition~\ref{prop:gem}),
we have that $\alpha(G,w) = \alpha(G',w')$.

It is therefore enough to compute the stability number of the weighted line graph $(G',w')$. This can be done as follows. First, compute
a graph $H'$ such that $G' = L(H')$; this can be done in time $O(|V(G')|+|E(G')|) = O(n^2)$~\cite{MR0424435}.
Second, solve the instance of the maximum weight matching problem on $H'$ with edge weights corresponding to vertex weights in $G'$. This can be done in time
$O(|V(H')|(|E(H')|+|V(H')|\log |V(H')|))$ using the algorithm by Gabow~\cite{DBLP:conf/soda/Gabow90}.

The time complexity of the whole algorithm in Case 3 is dominated by the term
$O(|V(H')|(|E(H')|+|V(H')|\log |V(H')|))$, which, since
$|V(H')| = O(|V(G')|) = O(n^2)$ and $|E(H')| = O(|V(G')|) = O(n^2)$, is of order
$O(n^4\log n)$. This completes the description of the algorithm for Case 3.

\medskip
The running time $O(n^4\log n)$ of Case 3 dominates
the running time of each of the other steps of the algorithm. This completes the proof.
\end{proof}

\section{Computing the stability number of \{ISK4,wheel\}-free weighted trigraphs}
\label{s:compClass}

\begin{sloppypar}
We now derive the main result of the paper: a polynomial-time algorithm that finds the stability number of a weighted \{ISK4,wheel\}-free trigraph. We remark that since every weighted \{ISK4,wheel\}-free graph (with non-negative integer weights) is a weighted \{ISK4,wheel\}-free trigraph, this algorithm can be used to compute the stability number of a weighted \{ISK4,wheel\}-free graph (and this is, in fact, the main purpose of our algorithm). 
\end{sloppypar}

\begin{theorem}\label{thm:main}
There exists an algorithm with the following specifications:
\begin{itemize}
 \item Input: a weighted \{ISK4,wheel\}-free trigraph $(G,w)$;
 \item Output: $\alpha(G,w)$;
 \item Running time: $O(n^7)$ where $n = |V(G)|$.
\end{itemize}
\end{theorem}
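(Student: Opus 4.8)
The plan is to proceed by recursion on $n = |V(G)|$, using the extreme decomposition theorem (Theorem~\ref{alg-extreme-decomp}) to peel off a basic block at each step. First I would run the algorithm from Theorem~\ref{alg-extreme-decomp} on $(G,w)$; this costs $O(n^6)$ and either certifies that $G$ is a basic trigraph or returns a good cut-partition $(A,B,C)$ whose $A$-block $G_A$ is basic. In the first case I simply return $\alpha(G,w)$ as computed by the algorithm from Theorem~\ref{thm:alpha-basic} in $O(n^4 \log n)$ time, and stop.

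In the second case, the key observation is that since $G_A$ is basic, so is each induced subtrigraph $G_A[A \cup C']$ with $C' \subseteq C$ (recall that every induced subtrigraph of a basic trigraph is basic). For each of the $O(1)$ subsets $C' \subseteq C$ (there are at most $2^{|C|} \le 8$ of them, since $|C| \le 3$), I would compute ${\rm Red}[G_A,w;A \cup C']$ and ${\rm Ext}[G_A,w;A \cup C']$ via Proposition~\ref{prop-ext-alg} in $O(n^2)$ time, and then compute $\alpha({\rm Red}[G_A,w;A \cup C'])$ via Theorem~\ref{thm:alpha-basic} in $O(n^4 \log n)$ time (the underlying trigraph of the reduction is $G_A[A \cup C']$, which is basic). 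Summing these yields the quantities $\alpha_{A \cup C'}$ appearing in Lemmas~\ref{lemma-weights-clique-cut} and~\ref{lemma-weights-proper-2-cut}; the total cost of this step is $O(n^4 \log n)$.

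I would then reduce to the $B$-side. If $(A,B,C)$ is of type clique, Lemma~\ref{lemma-weights-clique-cut} produces, from the values $\alpha_A$ and $\alpha_{A \cup \{c\}}$ for $c \in C$, a weight function $w_B$ on the $B$-block $G_B = G[B \cup C]$ with $\alpha(G,w) = \alpha_A + \alpha(G_B,w_B)$; if it is of type stable, Lemma~\ref{lemma-weights-proper-2-cut} produces $w_B$ with $\alpha(G,w) = \alpha(G_B,w_B)$. In either case $G_B$ is again \{ISK4,wheel\}-free by Proposition~\ref{blocks-free}, and $|V(G_B)| = |B| + |C| < n$ because $A \ne \emptyset$. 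I would recursively compute $\alpha(G_B,w_B)$ and return $\alpha_A + \alpha(G_B,w_B)$ in the type-clique case, or $\alpha(G_B,w_B)$ in the type-stable case.

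For the running time, each recursive call strictly decreases the number of vertices, so there are at most $n$ calls, and the work per call is dominated by the $O(n^6)$ cost of Theorem~\ref{alg-extreme-decomp}; hence the total is $O(n^7)$. Correctness follows by induction, the essential point being that Theorem~\ref{alg-extreme-decomp} guarantees that the block we continue to decompose ($G_B$) is strictly smaller, while the block from which we must extract stability data ($G_A$) is basic, so Theorem~\ref{thm:alpha-basic} applies to it directly. I expect the main obstacle to be not any single new argument but rather the correctness bookkeeping of the weighted reductions: one must confirm that the quantities $\alpha_{A \cup C'}$ computed from $G_A$ feed correctly into the weight functions of Lemmas~\ref{lemma-weights-clique-cut} and~\ref{lemma-weights-proper-2-cut}, and that these lemmas genuinely preserve the stability number up to the additive constant $\alpha_A$. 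Those facts are precisely what Section~\ref{s:swi} establishes, so here they can be invoked directly and the proof is mostly a matter of assembling the pieces.
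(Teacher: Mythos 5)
Your proposal is correct and follows essentially the same route as the paper's own proof: invoke Theorem~\ref{alg-extreme-decomp}, handle the basic case via Theorem~\ref{thm:alpha-basic}, otherwise compute the $O(1)$ reductions ${\rm Red}[G_A,w;A\cup C']$ (each basic, as an induced subtrigraph of the basic block $G_A$), build $w_B$ via Lemma~\ref{lemma-weights-clique-cut} or Lemma~\ref{lemma-weights-proper-2-cut}, and recurse on the strictly smaller $(G_B,w_B)$, giving at most $n$ calls of cost $O(n^6)$ each. No gaps.
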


\begin{proof}
Let $(G,w)$ be the input \{ISK4,wheel\}-free trigraph with $n = |V(G)|$. We first call the $O(n^6)$ time algorithm from Theorem~\ref{alg-extreme-decomp} with input $G$. This algorithm either returns the statement that $G$ is a basic trigraph, or returns a good cut-partition $(A,B,C)$ of $G$ such that the $A$-block of $G$ with respect to $(A,B,C)$ is a basic trigraph.

If the algorithm returns the statement that $G$ is a basic trigraph, then we apply Theorem~\ref{thm:alpha-basic} and compute $\alpha(G,w)$ in time $O(n^4\log n)$.

Suppose now that the algorithm returned a good cut-partition $(A,B,C)$ of $G$ such that the $A$-block of $G$ with respect to
$(A,B,C)$ is a basic trigraph. For $X\in \{A,B\}$, let $G_X$ be the $X$-block of $G$ with respect to $(A,B,C)$. Since $(A,B,C)$ is a good cut-partition of $G$, we know that $|C| \leq 3$, and so it can be determined in $O(1)$ time whether $(A,B,C)$ is of type clique or of type stable. We now analyze the two cases.

\medskip
{\bf Case 1.} The good cut-partition $(A,B,C)$ of $G$ is of type clique.

In this case, we have $G_X = G[X\cup C]$ for $X\in \{A,B\}$.
For each of the $2^{|C|} = O(1)$ sets of the form $C'\subseteq C$, compute the weighted trigraph ${\rm Red}[G_A,w;A\cup C']$ and the quantity $\text{Ext}[G_A,w;A\cup C']$. By Proposition~\ref{prop-ext-alg}, this can be done in time $O(n^2)$.

Note that each of the reductions ${\rm Red}[G_A,w;A\cup C']$ is a weighted
induced subtrigraph of a basic trigraph (namely, $G_A$ with an appropriate weight function), and so
${\rm Red}[G_A,w;A\cup C']$ is a basic trigraph.
Therefore, by Theorem~\ref{thm:alpha-basic}, for each $C'\subseteq C$, the stability number of
${\rm Red}[G_A,w;A\cup C']$ can be computed in time
 $O(n^4\log n)$.
For each $C'\subseteq C$, set $$\alpha_{A\cup C'} = \alpha({\rm Red}[G_A,w;A\cup C'])+\text{Ext}[G_A,w;A\cup C'].$$

Define $w_B:D(G_B) \to \mathbb{N}$ by setting $w_B(c) = \alpha_{A\cup \{c\}}-\alpha_A$ for all $c\in C$, and
$w_B\upharpoonright (D(G_B)\smallsetminus C) = w\upharpoonright (D(G_B)\smallsetminus C)$.
By Lemma~\ref{lemma-weights-clique-cut}, $w_B$ is a weight function for $G_B$.
Call the algorithm recursively on the weighted trigraph $(G_B,w_B)$ to compute $\alpha(G_B,w_B)$.
By Lemma~\ref{lemma-weights-clique-cut}, the stability number $(G,w)$ is given by $\alpha(G,w) = \alpha_A+\alpha(G_B,w_B)$.

\medskip
{\bf Case 2.} The good cut-partition $(A,B,C)$ of $G$ is of type stable.

In this case, $C$ is a stable set of $G$ of size two, and $G_X$ (for $X\in \{A,B\}$) is the trigraph obtained from $G[X\cup C]$ by making the two vertices of $C$ semi-adjacent. We proceed similarly as in Case 1, using Lemma~\ref{lemma-weights-proper-2-cut} instead of Lemma~\ref{lemma-weights-clique-cut}. For each of the $2^{|C|} = O(1)$ sets of the form $C'\subseteq C$, we use Proposition~\ref{prop-ext-alg} and Theorem~\ref{thm:alpha-basic} to compute the trigraph ${\rm Red}[G_A,w;A \cup C']$ and the value $\alpha_{A\cup C'} = \alpha({\rm Red}[G_A,w;A\cup C'])+\text{Ext}[G_A,w;A\cup C']$. We then compute in $O(n^2)$ time the mapping $w_B:D(G_B)\to \mathbb{N}$ defined as in Lemma~\ref{lemma-weights-proper-2-cut}, and we call the algorithm recursively on the weighted trigraph $(G_B,w_B)$. By Lemma~\ref{lemma-weights-proper-2-cut}, the stability number of $(G,w)$ is given by $\alpha(G,w) = \alpha(G_B,w_B)$. This completes the description of the algorithm for Case~2.

\medskip
As there are at most $n-1$ recursive calls and the remaining computations take $O(n^6)$ time,
the overall running time of the algorithm is $O(n^7)$.
\end{proof}

As an immediate corollary of Theorem~\ref{thm:main} and Proposition~\ref{prop:number-to-set}, we obtain the following result.

\begin{corollary}\label{thm:main-graphs}
There exists an algorithm with the following specifications:
\begin{itemize}
 \item Input: a weighted \{ISK4,wheel\}-free graph $(G,w)$ with non-negative integer weights;
 \item Output: a maximum weight stable set $S$ of $(G,w)$;
 \item Running time: $O(n^8)$ where $n = |V(G)|$.
\end{itemize}
\end{corollary}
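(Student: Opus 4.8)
The plan is to deduce this statement directly from the two results already at our disposal, namely Theorem~\ref{thm:main} and Proposition~\ref{prop:number-to-set}. The key observation is that a weighted \{ISK4,wheel\}-free graph $(G,w)$ with non-negative integer weights can be regarded as a weighted \{ISK4,wheel\}-free trigraph: a graph is precisely a trigraph with no semi-adjacent pairs, and a non-negative integer vertex-weighting $w$ extends to a weight function on $D(G)$ by setting $w(u,v)=w(v,u)=w(uv)=0$ for all distinct $u,v$ (this is forced, since there are no semi-adjacent pairs). I would first record this correspondence and, crucially, verify that the two notions of stability number coincide. This is immediate from the definition of $\llbracket \cdot \rrbracket$: as noted after its introduction, when $G$ has no semi-adjacent pairs we have $\llbracket S \rrbracket_{(G,w)} = \sum_{u \in S} w(u)$ for every $S \subseteq V(G)$, so the trigraph stability number $\alpha(G,w)$ of Section~\ref{s:swi} equals the ordinary weighted stability number of the graph $(G,w)$, and a stable set of the trigraph $G$ is just a stable set of the graph $G$.

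Having established this, the first concrete step is to invoke Theorem~\ref{thm:main}, which supplies an $O(n^7)$-time algorithm computing $\alpha(G,w)$ for any weighted \{ISK4,wheel\}-free trigraph. By the correspondence above, restricting the input to (weighted) \{ISK4,wheel\}-free graphs yields an algorithm $\mathcal{A}$ that computes the ordinary weighted stability number of any \{ISK4,wheel\}-free graph in $O(n^7)$ time. This is exactly the hypothesis of Proposition~\ref{prop:number-to-set} with $k = 7$ and $\mathcal{G}$ taken to be the class of \{ISK4,wheel\}-free graphs.

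It remains to check the one standing hypothesis of Proposition~\ref{prop:number-to-set}, that $\mathcal{G}$ is a hereditary class. This holds because \{ISK4,wheel\}-freeness is defined by forbidding induced subgraphs (ISK4's and wheels), so the property is inherited by every induced subgraph and is preserved under isomorphism; this was already remarked in the Introduction. With heredity confirmed, Proposition~\ref{prop:number-to-set} produces from $\mathcal{A}$ an algorithm $\mathcal{B}$ that actually outputs a maximum weight stable set, running in time $O\big(n^{\max\{k+1,\,3\}}\big) = O\big(n^{\max\{8,\,3\}}\big) = O(n^8)$, as claimed.

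I do not expect any genuine obstacle here, since the result is assembled entirely from previously proved statements. The only point requiring a moment of care is the first one: making sure that the trigraph-theoretic definitions of Section~\ref{s:swi} specialize correctly to ordinary graphs, so that the algorithm of Theorem~\ref{thm:main} really does compute the quantity $\alpha(G,w)$ appearing in the (graph) hypothesis of Proposition~\ref{prop:number-to-set}. Once that compatibility is spelled out, the running-time arithmetic $\max\{7+1,3\}=8$ is routine and the corollary follows.
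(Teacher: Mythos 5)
Your proposal is correct and matches the paper's own (very brief) derivation: the paper states the corollary as an immediate consequence of Theorem~\ref{thm:main} and Proposition~\ref{prop:number-to-set}, using exactly the observation that a weighted \{ISK4,wheel\}-free graph is a weighted \{ISK4,wheel\}-free trigraph with no semi-adjacent pairs, so $k=7$ gives $O(n^{8})$. Your extra care in checking that the trigraph notion of $\llbracket\cdot\rrbracket$ specializes to the ordinary weighted stability number is a worthwhile explicit verification that the paper leaves implicit.
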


We remark that the algorithm from Corollary~\ref{thm:main-graphs} cannot readily be generalized to trigraphs. One reason for this is that in the graph case, one can always find a maximum weight stable set that is also an inclusion-wise maximal stable set (and this fact is implicitly used in the proof of Proposition~\ref{prop:number-to-set}), whereas this is not the case for trigraphs. We believe that one could use techniques similar to the ones from Section~\ref{s:swi} in order to generalize Corollary~\ref{thm:main-graphs} to trigraphs. However, our main interest here is in graphs, and we used trigraphs only as a tool for obtaining algorithms for graphs; for this reason, we did not attempt to construct an algorithm for trigraphs analogous to the one given by Corollary~\ref{thm:main-graphs}.

\section{Bipartite trigraphs}
\label{s:bip}

As stated in the Introduction, computing the stability number of a weighted bipartite trigraph is NP-hard. We now prove this result.

\begin{theorem} \label{Bip-NP} The problem of computing the stability number of a weighted bipartite trigraph is NP-hard.
\end{theorem}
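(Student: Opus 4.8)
The plan is to exhibit a polynomial-time reduction from a known NP-hard problem to the problem of computing the stability number of a weighted bipartite trigraph. The natural source problem is the maximum stable set problem in general graphs, which I would reduce via the same subdivision-and-encoding strategy already used twice in the introduction (for wheel-free graphs and for extended bipartite graphs). The key idea is that a bipartite trigraph, thanks to its semi-adjacent pairs and the associated triple of weights $w(u,v)$, $w(v,u)$, $w(uv)$, can ``encode'' the four stable-set configurations across a proper 2-cutset of an arbitrary graph, exactly the phenomenon the introduction identifies as the source of difficulty for proper 2-cutsets.

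First I would take an arbitrary graph $G$ as the input to the hard general problem. I would build a bipartite graph $B$ by subdividing each edge of $G$ some fixed number of times (subdividing once already makes the graph bipartite, with the original vertices on one side and the subdivision vertices on the other, as in the proof of the extended-bipartite hardness result). This gives a bipartition $(X, V(B)\smallsetminus X)$ where $X$ is the set of degree-two subdivision vertices. The crucial next move is to turn $B$ into a bipartite \emph{trigraph} by converting selected strongly anti-adjacent pairs into semi-adjacent pairs — one semi-adjacent pair per original edge of $G$ — and then to assign the weights $w(u,v)$, $w(v,u)$, $w(uv)$ on each such semi-adjacent pair so that, under the definition of $\llbracket S\rrbracket_{(G,w)}$, the contribution of each edge-gadget to the weight of a stable set records whether zero, one, or two of the relevant endpoints are selected. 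One must check that after adding the semi-adjacent pairs the trigraph is still bipartite, i.e., that each side remains a strongly stable set; this forces the semi-adjacent pairs to lie between the two sides of the bipartition, which is consistent with placing them on the subdivided edges.

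The arithmetic heart of the construction is choosing the weights so that for the resulting weighted bipartite trigraph $(T,w)$ one obtains an identity of the form $\alpha(T,w) = \alpha(G) + c\cdot|E(G)|$ for some explicit constant $c$ (as in Poljak's $\alpha(G')=\alpha(G)+|E(G)|$ and in the $\alpha(H)=\alpha(G)+2|E(G)|$ identity already reproduced). The weight inequalities $\max\{w(u,v),w(v,u)\}\le w(uv)$ must be respected, which constrains the encoding but does not obstruct it, since one can always take $w(uv)$ large enough. I would verify the identity by splitting over the four cases for each edge gadget and invoking the definition of $\llbracket\cdot\rrbracket_{(T,w)}$ directly.

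\textbf{The main obstacle} I expect is getting the weight bookkeeping exactly right: because $\llbracket S\rrbracket_{(T,w)}$ counts $w(uv)$ for pairs \emph{outside} $S$ and $w(u,v)$ for a selected $u$ against an unselected $v$, the weight is non-monotone (as the paper stresses repeatedly), so one cannot reason ``greedily'' and must track every term. The delicate point is ensuring that the optimal stable set of $(T,w)$ is forced to ``behave consistently'' on each gadget — i.e., that no weight-maximizing stable set can cheat by a partial selection that would break the correspondence with stable sets of $G$. I would handle this by choosing the gadget weights with a dominant term that penalizes inconsistent selections, using Proposition~\ref{prop-delete-zero} and the monotonicity-type estimate of Proposition~\ref{prop-delete-vertices-weight} to argue that a maximum weight stable set may be taken in a canonical form from which $\alpha(G)$ can be read off. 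Once the identity $\alpha(T,w)=\alpha(G)+c\cdot|E(G)|$ is established and $(T,w)$ is seen to be constructible in polynomial time, the NP-hardness of computing $\alpha(G)$ transfers to the weighted bipartite trigraph problem, completing the reduction.
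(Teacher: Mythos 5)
Your reduction is essentially the one in the paper: the paper likewise subdivides each edge of the input graph $H$ exactly once, replaces each resulting edge by a semi-adjacent pair (so the subdivision vertices and $V(H)$ form the two sides of a bipartite trigraph with no strongly adjacent pairs at all), and proves $\alpha(G,w)=\alpha(H)+2|E(H)|$. The difference lies in how the identity is secured. The paper uses unit weights ($w(v)=1$ for every vertex, and for each oriented edge $\vec{uv}$ with subdivision vertex $x$ it sets $w(ux)=w(xv)=w(x,v)=w(v,x)=1$ and all other pair-weights to $0$), so that each edge-gadget contributes exactly $2$ when at most one of $u,v$ is selected and exactly $1$ otherwise; since an ``inconsistent'' gadget loses only $1$, the upper bound $\alpha(G,w)\le\alpha(H)+2m$ is not automatic and is proved by an exchange argument (take an optimal $S_G$ minimizing the number of arcs with both endpoints selected, and repair any such arc without losing weight). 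Your alternative --- a dominant penalty making inconsistent gadgets strictly suboptimal outright --- can also be made to work, but your one concrete suggestion for achieving it, ``take $w(uv)$ large enough,'' is the wrong lever: $w(uv)$ is counted precisely when \emph{both} endpoints of the pair lie outside $S$, so inflating it rewards omitting vertices rather than penalizing the selection of both ends of an original edge. To realize a genuine penalty you must tune all six pair-weights of each gadget $u-x-v$ simultaneously (subject to $w(u,x),w(x,u)\le w(ux)$, etc.) so that the best contribution over the two choices of whether $x\in S$ equals some fixed $M$ in each of the three consistent configurations and is at most $M-P$ with $P>|V(H)|$ when $u,v\in S$; such weights do exist (a scaled variant of the paper's works), but until they are written down and the eight cases per gadget are checked, the argument is a plan rather than a proof.
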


\begin{proof}
Suppose there is a polynomial-time algorithm $\cal A$ for the problem. We prove the theorem by using $\cal A$ as a subroutine to compute the stability number of a general graph in polynomial time.

Let $H$ be an arbitrary graph, and let $n = |V(H)|$ and $m = |E(H)|$. The idea is as follows. We build a bipartite trigraph $G$ by first subdividing each edge of $H$ once, and then turning all edges of the resulting graph into semi-adjacent pairs. (Thus, $|V(G)| = n+m$.) We construct a weight function $w$ for $G$ such that $\alpha(G,w) = \alpha(H)+2m$. We can use $\cal A$ to find $\alpha(G,w)$, and because $\alpha(G,w) = \alpha(H)+2m$, we deduce that $\alpha(H)$ can be found in polynomial time. Now, describing the weight function $w$ is bit complicated because if $uv$ is an edge of $H$, the weights assigned to the two semi-adjacent pairs of $G$ that correspond to $uv$ are not symmetric between $u$ and $v$. So, in order to properly define the weight function $w$, we must first introduce some more notation.

First, let $\vec{H} = (V(\vec{H}),A(\vec{H}))$ be any orientation of $H$ (in other words, $\vec{H}$ is a digraph that satisfies $V(\vec{H}) = V(H)$, for each edge $uv \in E(H)$, exactly one of the arcs $\vec{uv}$ and $\vec{vu}$ belongs to $A(\vec{H})$, and $A(\vec{H})$ contains no other arcs). For each $\vec{uv} \in A(\vec{H})$, we introduce a new vertex $x_{\vec{uv}}$, and we set $X = \{x_{\vec{uv}} \mid \vec{uv} \in A(\vec{H})\}$. We now let $G$ be the bipartite trigraph with bipartition $(X,V(H))$ in which for all $\vec{uv} \in A(H)$, vertex $x_{\vec{uv}}$ is semi-adjacent to $u$ and $v$ and strongly anti-adjacent to all other vertices of $V(H)$. (Thus, each arc $\vec{uv}$ of $\vec{H}$ effectively gets replaced by a narrow path $u-x_{\vec{uv}}-v$.) We remark that $G$ contains no strongly adjacent pairs, and so all subsets of $V(G)$ are stable sets of $G$.

We now define a function $w:D(G) \rightarrow \mathbb{N}$ as follows:
\begin{itemize}
\item $w(v) = 1$ for all $v \in V(G)$;
\item $w(ux_{\vec{uv}}) = w(x_{\vec{uv}}v) = w(x_{\vec{uv}},v) = w(v,x_{\vec{uv}}) = 1$ for all $\vec{uv} \in A(\vec{H})$;
\item $w(e) = 0$ for all other $e \in D(G)$.
\end{itemize}
Clearly, $w$ is a weight function for $G$, and by assumption, we can find $\alpha(G,w)$ in polynomial time. (Since $|V(G)| = n+m$, the running time is in fact polynomial in $n$.) We claim that $\alpha(G,w) = \alpha(H)+2m$. This is enough, for then we can clearly compute $\alpha(H)$ in polynomial time.

We now need some more notation. For each $\vec{uv} \in A(\vec{H})$ and $S \subseteq V(G)$, set
\begin{displaymath}
\begin{array}{rcl}
{\rm cont}({\vec{uv}};S) & = & \llbracket S \cap \{u,x_{\vec{uv}},v\} \rrbracket_{(G[u,x_{\vec{uv}},v],w)}-\sum\limits_{x \in S \cap \{u,v\}} w(x).
\end{array}
\end{displaymath}
We refer to ${\rm cont}({\vec{uv}};S)$ as the {\em contribution} of the arc $\vec{uv}$ to the weight of $S$ with respect to $(G,w)$. Clearly, for all $S \subseteq V(G)$, we have that
\begin{displaymath}
\begin{array}{rcl}
\llbracket S \rrbracket_{(G,w)} & = & \sum\limits_{x \in S \cap V(H)} w(x)+\sum\limits_{\vec{uv} \in A(\vec{H})} {\rm cont}({\vec{uv}};S)
\\
\\
& = & |S \cap V(H)|+\sum\limits_{\vec{uv} \in A(\vec{H})} {\rm cont}({\vec{uv}};S).
\end{array}
\end{displaymath}
Furthermore, we see by inspection that for all $\vec{uv} \in A(H)$ and $S \subseteq V(G)$, we have that
\begin{displaymath}
\begin{array}{lll}
{\rm cont}({\vec{uv}};S) & = &
\left\{\begin{array}{lll}
1 & \text{if} & \text{either $S \cap \{u,x_{\vec{uv}},v\} = \{u,x_{\vec{uv}},v\}$}
\\
& & \text{or $S \cap \{u,x_{\vec{uv}},v\} = \{x_{\vec{uv}},v\}$}
\\
& & \text{or $S \cap \{u,x_{\vec{uv}},v\} = \{u,v\}$}
\\
& & \text{or $S \cap \{u,x_{\vec{uv}},v\} = \{u\}$}
\\
\\
2 & \text{if} & \text{either $S \cap \{u,x_{\vec{uv}},v\} = \{u,x_{\vec{uv}}\}$}
\\
& & \text{or $S \cap \{u,x_{\vec{uv}},v\} = \{v\}$}
\\
& & \text{or $S \cap \{u,x_{\vec{uv}},v\} = \{x_{\vec{uv}}\}$}
\\
& & \text{or $S \cap \{u,x_{\vec{uv}},v\} = \emptyset$}
\end{array}\right.
\end{array}
\end{displaymath}
In particular then, $1 \leq {\rm cont}(\vec{uv};S) \leq 2$ for all $\vec{uv} \in A(\vec{H})$ and $S \subseteq V(H)$.

We can now show that $\alpha(H)+2m \leq \alpha(G,w)$. Let $S_H$ be a stable set of $H$ such that $|S_H| = \alpha(H)$. Since $S_H$ is a stable set of $H$, we know that $|S_H \cap \{u,v\}| \leq 1$ for all $\vec{uv} \in A(\vec{H})$. Now, let $Y = \{x_{\vec{uv}} \mid \vec{uv} \in A(\vec{H}), u \in S_H\}$, and set $S_G = S_H \cup Y$. By construction, for all $\vec{uv} \in A(\vec{H})$, we have that $S_G \cap \{u,x_{\vec{uv}},v\} \in \{\{u,x_{\vec{uv}}\},\{v\},\emptyset\}$, and consequently, ${\rm cont}(\vec{uv};S_G) = 2$. Since $|A(\vec{H})| = m$, and since $S_G$ is a stable set of $G$ (because $G$ contains no strongly adjacent pairs), it now follows that
\begin{displaymath}
\begin{array}{rcl}
\alpha(H)+2m & = & |S_H|+\sum\limits_{\vec{uv} \in A(\vec{H})} {\rm cont}(\vec{uv};S_G)
\\
\\
& = & \llbracket S_G \rrbracket_{(G,w)}
\\
\\
& \leq & \alpha(G,w).
\end{array}
\end{displaymath}

It remains to show that $\alpha(G,w) \leq \alpha(H)+2m$. Recall that all subsets of $V(G)$ are stable sets of $G$. Now, among all subsets $S_G$ of $V(G)$ that satisfy $\llbracket S_G \rrbracket_{(G,w)} = \alpha(G,w)$, choose one for which the size of the set $\{\vec{uv} \in A(\vec{H}) \mid u,v \in S_G\}$ is as small as possible. We need to show that $\llbracket S_G \rrbracket_{(G,w)} \leq \alpha(H)+2m$. Since ${\rm cont}(\vec{uv};S) \leq 2$ for all $\vec{uv} \in A(\vec{H})$, and since $|A(\vec{H})| = m$, we see that $\sum\limits_{\vec{uv} \in A(\vec{H})} {\rm cont}(\vec{uv};S_G) \leq 2m$, and consequently,
\begin{displaymath}
\begin{array}{rcl}
\llbracket S_G \rrbracket_{(G,w)} & = & |S_G \cap V(H)|+\sum\limits_{\vec{uv} \in A(\vec{H})} {\rm cont}(\vec{uv};S_G)
\\
\\
& \leq & |S_G \cap V(H)|+2m.
\end{array}
\end{displaymath}
Thus, it only remains to show that $|S_G \cap V(H)| \leq \alpha(H)$. To prove this, we need only show that $S_G \cap V(H)$ is a stable set of $H$. Suppose otherwise, and choose an arc $\vec{u_0v_0} \in A(\vec{H})$ such that $u_0,v_0 \in S_G$. Our goal is to construct a set $S_G' \subseteq V(G)$ such that $\llbracket S_G' \rrbracket_{(G,w)} = \alpha(G,w)$ and such that $|\{\vec{uv} \in A(\vec{H}) \mid u,v \in S_G'\}| < |\{\vec{uv} \in A(\vec{H}) \mid u,v \in S_G\}|$. This will contradict the minimality of $S_G$, which is all we need.

Let $S_G'$ be the subset of $V(G)$ defined as follows:
\begin{itemize}
\item $S_G' \cap V(H) = (S_G \cap V(H)) \smallsetminus \{u_0\}$;
\item for all $\vec{uv} \in A(\vec{H})$,
\begin{itemize}
\item if $u_0 \notin \{u,v\}$, then we set $x_{\vec{uv}} \in S_G'$ if and only if $x_{\vec{uv}} \in S_G$;
\item if $u_0 = u$, then we set $x_{\vec{u_0v}} \notin S_G'$;
\item if $u_0 = v$, then we set $x_{\vec{u_0v}} \in S_G'$.
\end{itemize}
\end{itemize}

Because of the arc $\vec{u_0v_0}$, we see that $|\{\vec{uv} \in A(\vec{H}) \mid u,v \in S_G'\}| < |\{\vec{uv} \in A(\vec{H}) \mid u,v \in S_G\}|$. In order to verify that $S_G'$ contradicts the minimality of $S_G$, it remains to show that $\llbracket S_G' \rrbracket_{(G,w)} = \alpha(G,w)$. By construction, $|S_G' \cap V(H)| = |S_G \cap V(H)|-1$. Next, for all $\vec{uv} \in A(\vec{H})$ such that $u_0 \in \{u,v\}$, we have that ${\rm cont}(\vec{uv};S_G') = 2$, and consequently, ${\rm cont}(\vec{uv};S_G') \geq {\rm cont}(\vec{uv};S_G)$. Furthermore, ${\rm cont}(\vec{u_0v_0};S_G) = 1$, and so ${\rm cont}(\vec{uv};S_G') = 1+{\rm cont}(\vec{uv};S_G)$. On the other hand, for all $\vec{uv} \in A(\vec{H})$ such that $u_0 \notin \{u,v\}$, we have that ${\rm cont}(\vec{uv};S_G') = {\rm cont}(\vec{uv};S_G)$. Thus,
\begin{displaymath}
\begin{array}{rcl}
\sum\limits_{\vec{uv} \in A(\vec{H})} {\rm cont}(\vec{uv};S_G') & \geq & 1+\sum\limits_{\vec{uv} \in A(\vec{H})} {\rm cont}(\vec{uv};S_G),
\end{array}
\end{displaymath}
and it follows that
\begin{displaymath}
\begin{array}{rcl}
\llbracket S_G' \rrbracket_{(G,w)} & = & |S_G' \cap V(H)|+\sum\limits_{\vec{uv} \in A(\vec{H})} {\rm cont}(\vec{uv};S_G')
\\
\\
& \geq & (|S_G \cap V(H)|-1)+(1+\sum\limits_{\vec{uv} \in A(\vec{H})} {\rm cont}(\vec{uv};S_G))
\\
\\
& = & |S_G \cap V(H)|+\sum\limits_{\vec{uv} \in A(\vec{H})} {\rm cont}(\vec{uv};S_G)
\\
\\
& = & \llbracket S_G \rrbracket_{(G,w)}
\\
\\
& = & \alpha(G,w).
\end{array}
\end{displaymath}
Since $S_G'$ is a stable set of $G$ (because $G$ contains no strongly adjacent pairs), we deduce that $\llbracket S_G' \rrbracket_{(G,w)} = \alpha(G,w)$. Thus, $S_G'$ indeed contradicts the minimality of $S_G$. This completes the argument.
\end{proof}

\section*{Acknowledgment} We would like to thank Fr\'ed\'eric Maffray for his help with the proof of Theorem~\ref{Bip-NP}.

\end{document}